\documentclass{amsart}
\usepackage[utf8]{inputenc}
\usepackage{adjustbox}
\usepackage{amsmath,amssymb, amsfonts} 

\usepackage[TS1,T1]{fontenc}
\usepackage{listings}
\usepackage{xcolor}

\usepackage{xfrac}

\usepackage[breaklinks,colorlinks,citecolor=blue,urlcolor=blue]{hyperref}

\usepackage[numbers,square,sort,elide]{natbib}
\newtheorem{theorem}{Theorem}[section]
\newtheorem{lemma}{Lemma}[section]
\newtheorem{proposition}{Proposition}[section]
\newtheorem{fact}{Fact}[section]
\newtheorem{corollary}[theorem]{Corollary}
\newtheorem{definition}{Definition}[section]

\newtheorem{conjecture}{Conjecture}[section]
\newcommand{\coloneqq}{\mathrel{\mathord{:}\mathord{=}}}

\usepackage{bm} 
\usepackage{tikz}
\usetikzlibrary{calc} 
\usetikzlibrary{shapes.geometric}
\usepackage{graphicx}
\RequirePackage{amsmath, soul} 
\RequirePackage{amssymb} 
\RequirePackage{latexsym} 
\RequirePackage[mathscr]{eucal} 
\RequirePackage{verbatim} 
\RequirePackage{enumerate} 
\RequirePackage{xspace}
\RequirePackage[colorlinks]{hyperref} 
\RequirePackage[all]{xy}
\RequirePackage{color}
\RequirePackage{graphicx}

\renewcommand{\t}{{\bf t}}

\newcommand{\f}{{\bf f}}

\newcommand\A{{\mathbf A}}
\newcommand\B{{\mathbf B}}

\bmdefine{\boldstar}{\mathchoice{\textstyle*}{\textstyle*}{\textstyle*}{\scriptstyle*}}

\newcommand\Alg[1]{\if#1*\operatorname{\mathsf{Alg}*}\else\operatorname{\mathsf{Alg}}#1\fi}

\newcommand\Mod[1]{\if#1*\operatorname{\mathsf{Mod}*}\else\operatorname{\mathsf{Mod}}#1\fi}

\bmdefine{\Leibniz}{\Omega}        

\bmdefine{\frege}{\Lambda}         

\makeatletter
\newcommand{\tarskidsp}{\mathord%
   {\m@th\raisebox{0pt}[0pt][0pt]{$\stackrel%
   {\raisebox{-2.7pt}[0ex][0pt]{$\displaystyle \,\?\thicksim$}}%
   {\displaystyle\Leibniz}$}}}
\newcommand{\tarskitxt}{\mathord%
   {\m@th\raisebox{0pt}[0pt][0pt]{$\stackrel%
   {\raisebox{-2.7pt}[0ex][0pt]{$\,\?\thicksim$}}{\displaystyle\Leibniz}$}}}
\newcommand{\tarskiscr}{\mathord%
   {{\m@th\raisebox{0pt}[0pt][0pt]{$\stackrel%
   {\raisebox{-2.4pt}[0ex][0pt]{$\scriptstyle \,\?\thicksim$}}%
   {\scriptstyle\Leibniz}$}}}}
\newcommand{\tarskiscrscr}{\mathord%
   {{\m@th\raisebox{0pt}[0pt][0pt]{$\stackrel%
   {\raisebox{-2pt}[0ex][0pt]{$\scriptscriptstyle \,\?\thicksim$}}%
   {\scriptscriptstyle\Leibniz}$}}}}
\newcommand{\Tarski}{\@ifnextchar ^ %
   {\mathchoice{\tarskidsp\kern-.07em}{\tarskitxt\kern-.07em}%
   {\tarskiscr\kern-.07em}{\tarskiscrscr\kern-.07em}}%
   {\mathchoice{\tarskidsp}{\tarskitxt}{\tarskiscr}{\tarskiscrscr}}}
\makeatother
\DeclareMathAlphabet{\mathbfsf}{\encodingdefault}{\sfdefault}{bx}n
\makeatletter
\providecommand*{\Dashv}{\mathrel{\mathpalette\@Dashv\vDash}}
\newcommand*{\@Dashv}[2]{\reflectbox{$\m@th#1#2$}}
\makeatother

\renewcommand\geq{\geqslant}

\newcommand\PL{{\mathbf{P}{\text{\rm \l}}}}
\newcommand\DPL{{\mathbf{DP}{\text{\rm \l}}}}

\newcommand{\alga}{\mathbf A}

\begin{document}
\title[]{Varieties of De Morgan bisemilattices}
\author{F. Paoli, D. Szmuc, A. Borzi, M. Zirattu}
\date{}
\begin{abstract}
    De Morgan bisemilattices are expansions of distributive bisemilattices by an involution satisfying De Morgan properties. They have attracted interest both as algebraic models of analytic containment logics, and as a case study for a certain generalisation of the P\l onka sum construction (\emph{De Morgan-P\l onka sums}). In this paper, we provide a complete description of the lattice of subvarieties of the variety $\mathcal{DMBL}$ of De Morgan bisemilattices. For each subvariety in the lattice, we identify a finite set of finite generators, a characterisation of the De Morgan-P\l onka representations of its members, and a syntactic description of its valid identities. In many cases, we also give an axiomatisation relative to $\mathcal{DMBL}$.

    \emph{Keywords: }De Morgan bisemilattices; De Morgan lattices; P\l onka sums; De Morgan-P\l onka sums; involutive semilattices; regular varieties.

    \emph{MSC Classification 2020: }06A12, 06D30, 08B15.
\end{abstract}
\maketitle

\section{Introduction}

Let $\mathcal{L}$ be a type without constants and with at least an operation symbol of arity $2$ or greater. An $\mathcal{L}$-identity $\varphi \approx \psi$ is \emph{regular} when $\varphi$ and $\psi$ contain precisely the same variables, and a variety $\mathcal{V}$ of type $\mathcal{L}$ is \emph{regular} if it satisfies only regular identities. Although the literature on regular varieties is fairly extensive (see, e.g., \cite{Romanowska, RS}), the only class of regular varieties that is thoroughly understood from the viewpoint of universal algebra consists of the so-called \emph{regularisations of strongly irregular varieties}, which admit satisfactory representations via the classical construction of \emph{P\l onka sums} \cite{Plo67, Plo69, Romanowska, Romanowska92, RS, BonzioPaoliPraBaldi}. The investigation of regular varieties lying outside this class calls for generalisations of the P\l onka construction---generalisations that sometimes also prove effective in the study of certain irregular varieties. These include Lallement sums \cite{RS}, functorial or Agassiz sums \cite{grsic, RS}, involutorial P\l onka sums \cite{Dolvin}, enriched P\l onka sums \cite{fusco1}.

An interesting proposal aimed at extending the scope of the P\l onka construction was recently put forward by Thomas Randriamahazaka under the name of \emph{De Morgan-P\l onka sums} \cite{RandriamahazakaSL}. As in Dolinka and Vin\v{c}ic's involutorial P\l onka sums, this construction employs involutive semilattice direct systems of algebras instead of P\l onka's original semilattice direct systems. However, Dolinka and Vin\v{c}ic's approach is crucially modified so as to be better suited for the investigation of algebras on \emph{dualised} languages, whose operation symbols are paired to exhibit a (generalised form of a) De Morgan duality with respect to a designated unary operation of \emph{negation}. Subject to certain provisos that will be made precise later in the paper, Randriamahazaka proves a generalisation of P\l onka's theorem according to which the identities preserved by his construction are the \emph{balanced regular identities}. These are regular identities $\varphi \approx \psi$ such that a variable occurs in $\varphi$ in the scope of an even (respectively, odd) number of negations if and only if it occurs in $\psi$ in the scope of an even (respectively, odd) number of negations. He also proves that the algebras representable by his method are precisely the \emph{a-involutive} left-normal bands with compatible operations---where, characteristically, an a-involutive band differs from an involutive band because it satisfies the identity $\lnot (x \cdot y) \approx \lnot x \cdot \lnot y$ instead of $\lnot (x \cdot y) \approx \lnot y \cdot \lnot x$.

As an application, Randriamahazaka gives a representation of \emph{De Morgan bisemilattices} (called \textquotedblleft Angellic algebras\textquotedblright ~in his paper), that is, expansions of distributive bisemilattices by an involution satisfying De Morgan properties. In addition to its intrinsic universal algebraic interest, the variety $\mathcal{DMBL}$ of De Morgan bisemilattices---which is regular, but does not regularise any strongly irregular variety---is important in logic as an algebraic semantics for analytic containment logics \cite{angell1989, FineAngell, Fergusonbook}. De Morgan bisemilattices are studied in \cite{PaoliSzmucZirattu}, where results are obtained on varietal and quasivarietal generators of $\mathcal{DMBL}$, on its subdirectly irreducible algebras, and partial results are given concerning the structure of its lattice of subvarieties. A complete description of such a lattice, however, was left open. This paper aims at filling this gap.

The article is structured as follows. Section \ref{prel} contains some required preliminary notions on De Morgan lattices, on bands and semilattices, on P\l onka sums and regular varieties, and on De Morgan-P\l onka sums. Section \ref{invosemi} establishes several auxiliary results on bands and semilattices equipped with an involution, which will later be used in the proof of the main structure theorems. Also, given a variety $\mathcal{V}$ of dualised type, we introduce---in addition to its regularisation $R(\mathcal{V})$ and its balanced regularisation $B(\mathcal{V})$---two other types of regularisations that were not previously considered in the literature: the \emph{bipolarly balanced regularisation} $Bip(\mathcal{V})$ and the \emph{regular bipolarly balanced regularisation} $R(Bip(\mathcal{V}))$. In Section \ref{morgana} we introduce the variety $\mathcal{DMBL}$ of De Morgan bisemilattices and recall Randriamahazaka's theorem stating that $\mathcal{DMBL}$ is the balanced regularisation of the variety $\mathcal{DML}$ of De Morgan lattices. We then study the varieties $B(\mathcal{DML}), R(\mathcal{DML}), Bip(\mathcal{DML})$ and $R(Bip(\mathcal{DML}))$, providing for each an equational basis, a finite set of finite generators, and a description of the De Morgan-P\l onka representations of their members. The same types of regularisations are subsequently examined for the \emph{proper} subvarieties of $\mathcal{DML}$, namely Kleene lattices, Boolean algebras, and the trivial variety. In a concluding subsection, we observe that there exist De Morgan bisemilattices that are neither involutive semilattices nor generators of varieties extending the variety of Boolean algebras. The varieties generated by these algebras do not coincide with any of the regularisations considered above and therefore require separate investigation. In Section \ref{fireworks}, we conclude by presenting a complete description of the lattice of subvarieties of $\mathcal{DMBL}$.\footnote{This article features relabellings of some recently introduced terminology, e.g. in \cite{PaoliSzmucZirattu}. For starters, bipolarly balanced regularisations are therein called conditionally balanced, whereas regular bipolarly balanced ones are called strictly conditionally balanced; similarly, regularly absorptive cases are called semiabsorptive. We think the current nomenclature could, perhaps, be more transparent to the fact that each of these varieties are characterised by their satisfaction of the appropriately described variations of the absorption law.}

For all unexplained notation or terminology on universal algebra, the reader is referred to \cite{BurrisSankappanavar}.


\section{Preliminaries}\label{prel}

\subsection{De Morgan lattices}\label{Belnapsection}

A \emph{De Morgan lattice} \cite{Kalman, Monteiro, Rasiowa} is an algebra $\mathbf{L} = \langle L, \land, \lor, \lnot \rangle$ of type $\langle 2,2,1 \rangle$ such that $\langle L, \land, \lor \rangle$ is a distributive lattice and the identities $\lnot \lnot x \approx x$, $\lnot (x \land y) \approx \lnot x \lor \lnot y$ and $\lnot (x \lor y) \approx \lnot x \land \lnot y$ are satisfied. As a consequence, De Morgan lattices form a variety, hereafter noted $\mathcal{DML}$. The prime example of a De Morgan lattice is the $4$-element algebra $\mathbf{DM}_4$ whose Hasse diagram appears in Figure \ref{fig:DM4}.

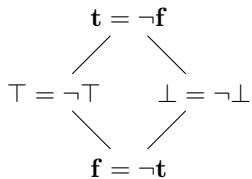
\begin{figure}[ht]
    \centering

\begin{tikzpicture}[scale=.5]
  \node (one) at (0,2) {$\t = \lnot \f$};
  \node (a) at (-2,0) {$\top = \lnot \top$};
  \node (b) at (2,0) {$\bot = \lnot \bot$};
  \node (zero) at (0,-2) {$\f = \lnot \t$};
  \draw (zero) -- (a) -- (one) -- (b) -- (zero);
\end{tikzpicture}

    \caption{The 4-element De Morgan lattice $\mathbf{DM}_4$}
    \label{fig:DM4}
\end{figure}

Observe that the $2$-element Boolean algebra $\mathbf{B}_2$ is isomorphic to the subalgebra of $\mathbf{DM}_4$ with universe $ \{ \t, \f \}$. The only other proper nontrivial subalgebra (up to isomorphism) of $\mathbf{DM}_4$ is the $3$-element algebra $\mathbf{K}_3$, with universe $\{ \t, \f, \top \}$ (or equivalently, $\{ \t, \f, \bot \}$). As a matter of fact, we have that:

\begin{theorem}\cite{Kalman}\label{periscopio}
The following holds:
\begin{enumerate}
    \item The only nontrivial subdirectly irreducible members of $\mathcal{DML}$ are $\mathbf{DM}_4$, $\mathbf{K}_3$ and $\mathbf{B}_2$. 
    \item The unique nontrivial proper subvarieties of $\mathcal{DML}$ are the variety $\mathcal{KL}=V(\mathbf{K}_3)$ of Kleene lattices and the variety $\mathcal{BA}=V(\mathbf{B}_2)$ of Boolean algebras.
    \item  $\mathbf{DM}_4$ generates $\mathcal{DML}$ as a quasivariety.
\end{enumerate}
\end{theorem}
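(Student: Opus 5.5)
The natural route is through prime filters, since every De Morgan lattice has a distributive lattice reduct. For each prime filter $P$ of $\mathbf{L}$, consider the map $h_P \colon L \to DM_4$ determined by the pair of truth values ($a \in P$?, $\lnot a \notin P$?). Concretely, send $a$ to $\t$ if $a \in P$ and $\lnot a \notin P$; to $\top$ if both $a, \lnot a \in P$; to $\bot$ if neither is in $P$; and to $\f$ if $a \notin P$ and $\lnot a \in P$. This is the standard encoding of $\mathbf{DM}_4$ as the bilattice-style product $\mathbf{2}\times\mathbf{2}^{\partial}$ with the swap-and-complement negation.

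\textbf{Step 1: each $h_P$ is a homomorphism.} Primeness of $P$ gives $a\lor b\in P$ iff $a\in P$ or $b\in P$, and the filter property gives the corresponding clause for $\land$. The De Morgan laws convert the clauses for $\lnot(a\land b)$ and $\lnot(a\lor b)$ into the same shape. Negation is preserved because $\lnot\lnot a = a$ makes $h_P(\lnot a)$ the swap of the two coordinates of $h_P(a)$, which is exactly how negation acts on $\mathbf{DM}_4$.

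\textbf{Step 2: the $h_P$ jointly separate points.} By the prime filter theorem for distributive lattices, if $a\neq b$, say $a\not\le b$, there is a prime filter containing $a$ but not $b$. Hence $\mathbf{L}$ embeds in a power of $\mathbf{DM}_4$, so $\mathcal{DML} = ISP(\mathbf{DM}_4)$. This gives (3) at once; it is a quasivariety statement because only $I$, $S$ and $P$ are used.

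\textbf{Step 3: the subdirectly irreducibles, for (1).} Any subdirectly irreducible $\mathbf{L}$ embeds in some $\mathbf{DM}_4^I$, so it is isomorphic to a subalgebra of $\mathbf{DM}_4$. It remains to list the subalgebras, which must be closed under $\lnot$:
\begin{itemize}
\item the singletons $\{\top\}$ and $\{\bot\}$, which are trivial;
\item $\{\t,\f\}\cong\mathbf{B}_2$;
\item $\{\t,\f,\top\}\cong\{\t,\f,\bot\}\cong\mathbf{K}_3$;
\item $\{\top,\bot\}$, which generates $\t=\top\lor\bot$, so it is not a subalgebra;
\item $\mathbf{DM}_4$ itself.
\end{itemize}
Next I verify that the three nontrivial ones are indeed subdirectly irreducible by computing their congruences. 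The lattice congruences of $\mathbf{DM}_4\cong\mathbf{2}^2$ are the two product kernels, and these are exchanged by negation, so they are not congruences. Hence $\mathbf{DM}_4$ is simple. For $\mathbf{K}_3$, the chain $\f<\top<\t$ has the two congruences collapsing $\{\f,\top\}$ or $\{\top,\t\}$, and negation swaps them, so $\mathbf{K}_3$ is simple as well. Finally, $\mathbf{B}_2$ is simple.

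\textbf{Step 4: the subvarieties, for (2).} By Jónsson's lemma applied to the congruence-distributive variety generated by the finite algebra $\mathbf{DM}_4$, every subvariety is generated by its subdirectly irreducible members. These are drawn from $\mathbf{B}_2$, $\mathbf{K}_3$ and $\mathbf{DM}_4$, and since $\mathbf{B}_2\le\mathbf{K}_3\le\mathbf{DM}_4$ the possible subvarieties form a chain. To show the chain is strict, I would exhibit separating identities:
\begin{itemize}
\item $x\land\lnot x\le y\lor\lnot y$ holds in $\mathbf{K}_3$ but fails in $\mathbf{DM}_4$ at $x=\top$, $y=\bot$;
\item $x\land\lnot x\le y$ holds in $\mathbf{B}_2$ but fails in $\mathbf{K}_3$.
\end{itemize}
The proper nontrivial subvarieties are therefore exactly $V(\mathbf{K}_3)$ and $V(\mathbf{B}_2)$.

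I expect the main obstacle to be Step 2, specifically the appeal to the prime filter theorem, which depends on Zorn's lemma. Once that separation result is granted, the remaining steps reduce to finite computations.
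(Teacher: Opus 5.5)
The paper gives no proof of this theorem. It is quoted from Kalman, so there is no internal argument to compare against.

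Your proof is correct and follows the standard prime-filter route.
\begin{itemize}
\item Steps 1--2 give $\mathcal{DML}=ISP(\mathbf{DM}_4)$. This yields (3) and also $\mathcal{DML}=V(\mathbf{DM}_4)$.
\item Step 3 correctly lists the subalgebras of $\mathbf{DM}_4$ and shows the three nontrivial ones are simple. Your tacit claim that a subdirectly irreducible algebra has some injective $h_P$ holds because otherwise the monolith would lie in every $\ker h_P$.
\item The identities in Step 4 do separate the chain.
\end{itemize}

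A few small corrections, all of wording only.
\begin{itemize}
\item In Step 4, the fact you use is that every variety is generated by its subdirectly irreducible members. That is Birkhoff's subdirect representation theorem, not J\'onsson's lemma. J\'onsson's lemma would instead give (1) directly as $HS(\mathbf{DM}_4)$, once $\mathcal{DML}=V(\mathbf{DM}_4)$ is known from Step 2.
\item In Step 1, your second coordinate is $[\lnot a\notin P]$. With that choice, negation acts on pairs as swap-and-complement, not as a plain swap. A plain swap goes with the coordinate $[\lnot a\in P]$ read in $\mathbf{2}\times\mathbf{2}^{\partial}$.
\item Your two separating identities are exactly the Kleene axiom and the Boolean axiom. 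It is worth saying so explicitly, since this is what identifies $V(\mathbf{K}_3)$ and $V(\mathbf{B}_2)$ with the varieties of Kleene lattices and Boolean algebras, as item (2) asserts.
\end{itemize}
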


\subsection{Bands and semilattices}

Recall that a \emph{band} \cite[Ch. 2]{Petrich} is an idempotent semigroup $\langle A, \cdot \rangle$. When it's not prejudicial to comprehension, occurrences of $\cdot$ will be replaced by plain juxtaposition. A band is called \emph{left-normal} if it satisfies the identity $xyz \approx xzy$, and it is called \emph{right-normal} if it satisfies the mirrored identity  $xyz \approx yxz$. It is easily verified that a band is commutative iff it is both left-normal and right-normal. In such a case, it is called a \emph{semilattice}, and the product is often denoted by the more evocative symbol $\lor$. A band is called \emph{left-zero} if it satisfies the identity $xy \approx x$, and it is called \emph{right-zero} if it satisfies the mirrored identity  $xy \approx y$. Clearly, any band that is both left-zero and right-zero is trivial. Recall, moreover, that a \emph{double band} \cite{LL} is an algebra $\alga = \langle A, \cdot, + \rangle$ of type $\langle 2,2 \rangle$ such that both $\langle A, \cdot \rangle$ and $ \langle A, + \rangle$ are bands. 

An \emph{i-band} is an algebra $ \langle A,\cdot,\lnot \rangle$ of type $\langle 2,1 \rangle$ such that $\langle A,\cdot \rangle$ is a band and $\lnot$ is a unary operation satisfying $\lnot \lnot x \approx x$. Observe that i-bands always have a double band term reduct, by setting $x + y := \lnot (\lnot x \cdot \lnot y)$. An \emph{involution band} (or \emph{involutive band}) \cite{Dolinka2000} is an i-band that satisfies the identity $\ xy \approx y + x$, while an \emph{a-involutive band} satisfies the identity $\ xy \approx x + y$. While the main motivation for the former identity is generalising the analogous property of groups, the latter is better suited for the study of P\l onka-type decompositions (see below, Section \ref{plonkplink}). Clearly, an involutive band whose underlying band is a semilattice is also an a-involutive band, and is called an \emph{involutive semilattice}. It may be interesting to observe that all left-normal involutive bands are involutive semilattices, while there are non-commutative left-normal a-involutive bands. The variety of involutive semilattices will be denoted by $\mathcal{ISL}$.


Let $\langle A, \cdot \rangle$ be a band. As is well-known \cite[Ch. 2]{Grillet} we can define the following binary relations on $A$, letting for any $a,b \in A$:
\begin{eqnarray*}
    a \leq_L b&:=&ab = a \\
     a \leq_R b&:=&ba = a \\
     a \leq_D b&:=&aba = a \\
     a \leq_H b&:=&ab = a = ba.
\end{eqnarray*}
All these relations are preorders on $A$, and ${\leq_H} = {\leq_L} \cap {\leq_R}$ is a partial order. They are collectively known as \emph{Green's preorders} on the band $\langle A, \cdot \rangle$, and are respectively called the \emph{left, right}, and \emph{natural preorders} and the \emph{natural partial order} on $\langle A, \cdot \rangle$. Observe that in a left-normal band ${\leq_L} = {\leq_D}$, so we only have three Green's preorders to consider. The equivalence relation induced by $\leq_D$ is a congruence in any band. It is generally noted $\mathcal{D}$ and called the \emph{natural Green's relation} of $\langle A, \cdot \rangle$. The following classical result is known as the  Clifford-McLean theorem (see e.g. \cite[Thm. 4.4.1]{Howie}):
\begin{theorem}
    If $\mathbf{A}$ is a band, $\A / \mathcal{D}$ is the maximal semilattice quotient of $\A$.
\end{theorem}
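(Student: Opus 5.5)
To prove the Clifford--McLean theorem, I would first show that $\mathcal{D}$ is a congruence whose quotient is a semilattice. After that, I would show that $\mathcal{D}$ lies below every congruence $\theta$ with $\A/\theta$ a semilattice. Together these two facts give maximality: every semilattice quotient of $\A$ factors through $\A/\mathcal{D}$.

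\textbf{Step 1: the basic identities.} The key computational tool is the pair of identities
\[
xyx\,y \approx xy \approx x\,yxy ,
\]
which follow from idempotency, since $xyxy = (xy)^2$. I would also use the standard fact that every band satisfies $xyzx \approx xyxzx$. This fact is the main technical obstacle. The classical route to it goes through the identity $xyx\,zx$ and the characterisation $a \mathcal{D} b \iff aba = a \text{ and } bab = b$. I would try to derive it from rectangularity of the $\mathcal{D}$-classes. Concretely, I would first show that $x \mathrel{\mathcal{D}} xyx$ whenever $x \leq_D y$-type relations hold, and then use the fact that a band whose elements all satisfy $aba = a$ is rectangular.

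\textbf{Step 2: $\mathcal{D}$ is a congruence.} Following the paper's assertion that the equivalence induced by $\leq_D$ is a congruence, I would take this as given. If it had to be checked directly, the point is the following: for $a \mathcal{D} b$ one must show $ac \mathrel{\mathcal{D}} bc$, i.e. $ac\,bc\,ac = ac$. This is where the identity $xyzx \approx xyxzx$ is used.

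\textbf{Step 3: the quotient is a semilattice.} The quotient is an idempotent semigroup, so it remains to show commutativity. I would show $xy \mathrel{\mathcal{D}} yx$, which amounts to
\[
xy\,yx\,xy = xyxy = xy \quad\text{and}\quad yx\,xy\,yx = yx ,
\]
both immediate from idempotency.

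\textbf{Step 4: maximality.} Let $\theta$ be a congruence with $\A/\theta$ a semilattice, and suppose $a \mathcal{D} b$. Then $a = aba$ and $b = bab$. Modulo $\theta$ the quotient is commutative and idempotent, so $aba \equiv ab$ and $bab \equiv ab$. Hence $a \mathrel{\theta} ab \mathrel{\theta} b$. Therefore $\mathcal{D} \subseteq \theta$, and $\A/\theta$ is a homomorphic image of $\A/\mathcal{D}$.

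Step 4 is short. The real difficulty is confined to the congruence property in Step 2, which the paper already states.
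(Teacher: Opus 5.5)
The paper gives no proof of this statement. It is quoted from Howie (Thm.~4.4.1), and so is the sentence just before it asserting that the equivalence induced by $\leq_D$ is a congruence. Granting that sentence, your Steps~3 and~4 are correct and do finish the argument: $xy\,yx\,xy=xyxy=xy$, and in any semilattice quotient $[a]=[aba]=[ab]=[bab]=[b]$. But that sentence is the whole nontrivial content of the Clifford--McLean theorem. So is the transitivity of $\leq_D$ (from $aba=a$ and $bcb=b$ to $aca=a$), which is what makes $\mathcal{D}$ an equivalence at all. By taking Step~2 as given you assume exactly what needs proof, and only routine checks remain.

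The route you sketch for Step~2 also cannot work, because $xyzx\approx xyxzx$ is \emph{not} valid in all bands. It is the defining identity of \emph{regular} bands, a proper subvariety.

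\textbf{Counterexample.} Take self-maps of $\{1,2,3,4\}$ acting on the right, written as lists of the images of $1,2,3,4$: $x=(1,1,3,4)$, $y=(2,2,3,4)$, $z=(4,3,3,4)$.
\begin{itemize}
\item They generate the five-element band $\{x,y,z,xz,yz\}$, where $xz=(4,4,3,4)$ and $yz=(3,3,3,4)$.
\item In this band $xyzx=yz\neq xz=xyxzx$.
\item To see it, follow the point $1$: under $xyzx$ it goes $1\mapsto 1\mapsto 2\mapsto 3\mapsto 3$, while under $xyxzx$ it goes $1\mapsto 1\mapsto 2\mapsto 1\mapsto 4\mapsto 4$.
\end{itemize}
So any derivation of $ac\,bc\,ac=ac$ from that identity would cover only regular bands. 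Appealing to rectangularity of the $\mathcal{D}$-classes is circular as well, since you first need them to be subsemigroups, which is part of what Step~2 delivers.

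\textbf{A correct route.}
\begin{enumerate}
\item Show that $a\leq_D b$ iff $a\in A^1bA^1$. The nontrivial direction comes from the genuine band identity $xzy\,z\,xzy\approx xzy$. This gives transitivity of $\leq_D$ and identifies $\mathcal{D}$ with Green's $\mathcal{J}$.
\item Suppose $w\leq_D a$ and $w\leq_D b$. Then $wa\mathrel{\mathcal{R}}w\mathrel{\mathcal{L}}bw$. Since every element is idempotent, the Miller--Clifford theorem puts $wa\cdot bw$ in $R_w\cap L_w=H_w=\{w\}$. Hence $wabw=w$, i.e.\ $w\leq_D ab$.
\item Since also $ab\leq_D a$ and $ab\leq_D b$, the class $[ab]$ is the meet of $[a]$ and $[b]$ in the poset $A/\mathcal{D}$.
\end{enumerate}
This gives compatibility and commutativity at once, and your Step~4 then completes the proof.
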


In any left-normal double band, each of the reducts $\langle A, \cdot \rangle$ and $\langle A, + \rangle$ has its associated Green's preorders; we denote them as ${\leq_L^{\cdot}},{\leq_R^{\cdot}}$, $\leq_H^{\cdot}$, ${\leq_L^{+}}, {\leq_R^{+}}$, $\leq_H^{+}$, where the notation is self-explanatory.

\subsection{P\l onka sums and regular varieties}\label{plonkplink}

A  key tool for the investigations that follow is a well-known algebraic construction introduced by Jerzy P\l onka in \cite{Plo67,Plo69} (see also \cite{BonzioPaoliPraBaldi, RS}) and later dubbed \emph{P\l onka sum}. 

Henceforth, let $\mathcal{L}$ be a similarity type without constants, containing at least an operation symbol of arity $2$ or greater, and let $\mathbf{I}=\langle I, \lor \rangle$ be a semilattice. A \emph{semilattice direct system} of $\mathcal{L}$-algebras is an $\mathbf{I}$-shaped diagram in the algebraic category $\mathfrak{A}$ of $\mathcal{L}$-algebras, namely, a covariant functor $F$ from $\mathbf{I}$ (viewed as a small category) to $\mathfrak{A}$. Given $i \in I$, the $\mathcal{L}$-algebra $F(i)$ will be called a \emph{fibre} of the semilattice direct system, while, for $i,j \in I$ such that $i \lor j = j$, the homomorphism $F(\langle i,j \rangle): F(i) \to F(j)$ will be noted $p_{ij}$ and called a \emph{transition map}.


Given a semilattice direct system $F$ of $\mathcal{L}$-algebras with underlying semilattice $\mathbf{I} = \langle I, \vee \rangle$, the \emph{P\l onka sum} over $F$ is the $\mathcal{L}$-algebra $\A= \PL(F) $ whose universe is the disjoint union $A=\displaystyle\bigsqcup_{i\in I} F(i) $ and where a generic $n$-ary term operation $g^\A$ is computed as follows: 
\begin{equation*}\label{eq: operazioni}
g^{\A}(a_{1},\dots, a_n)\coloneqq g^{F(k)}(p_{i_{1}k}(a_1),\dots p_{i_{n}k}(a_n)),
\end{equation*}
where $k= i_{1}\vee\dots\vee i_{n}$ and $a_1\in F(i_1),\dots, a_{n}\in F(i_n)$. The \emph{fibres} of a P\l onka sum are the fibres of its underlying semilattice direct system. A fibre is \emph{trivial} if its universe is a singleton. 

P\l onka sums are especially apt for the study of the so-called regularisations of strongly irregular varieties. Indeed, if $\varphi$ is an $\mathcal{L}$-formula, let $Var(\varphi)$ stand for the set of variables appearing in $\varphi$. An $\mathcal{L}$-identity $\varphi \approx \psi$ is called \textit{regular} if $Var(\varphi) = Var(\psi)$, and \emph{irregular} otherwise. A variety of $\mathcal{L}$-algebras is \emph{strongly irregular} if it satisfies an irregular identity of the form $x \approx \varphi(x,y)$, where the variables $x,y$ essentially occur in $\varphi(x,y)$. Given a strongly irregular variety $\mathcal{V}$, its \textit{regularisation} $R(\mathcal{V})$ is the variety that satisfies all and only the regular identities valid in $\mathcal{V}$. Among the results proved by P\l onka, the following two are especially remarkable:

\begin{theorem}\cite[Thm. I]{Plo67}\label{affarone}
If $F$ is a semilattice direct system of $\mathcal{L}$-algebras with underlying semilattice $\mathbf{I}$, and the $2$-element semilattice $\mathbf{S}_2$ is a subalgebra of $\mathbf{I}$, all regular identities satisfied in all fibres of $F$ are satisfied in $\PL(F)$, whereas any other identity is not satisfied in $\PL(F)$.
\end{theorem}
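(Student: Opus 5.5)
The argument has two halves: first that regular identities holding in every fibre hold in $\PL(F)$, and then that no irregular identity holds, using the copy of $\mathbf{S}_2$ inside $\mathbf{I}$. Throughout, the key tool is a description of how term operations are computed in a P\l onka sum.

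\emph{Step 1 (term operations).} By induction on the complexity of an $\mathcal{L}$-term $\varphi(x_1,\dots,x_n)$, all of whose variables occur, I show the following. For $a_1\in F(i_1),\dots,a_n\in F(i_n)$ we have
\[
\varphi^{\A}(a_1,\dots,a_n)=\varphi^{F(k)}(p_{i_1k}(a_1),\dots,p_{i_nk}(a_n)),
\]
where $k=\bigvee_{j} i_j$ ranges over the indices of the variables actually occurring in $\varphi$. The base case (a variable) is immediate, since $p_{ii}$ is the identity. In the inductive step $\varphi=g(\psi_1,\dots,\psi_m)$, each $\psi_l^{\A}(\vec a)$ lies in the fibre indexed by the join $k_l$ of the indices of its own variables. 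The join of the $k_l$ is $k$. Since the transition maps are homomorphisms and satisfy the functoriality law $p_{k_lk}\circ p_{i k_l}=p_{ik}$, pushing everything into $F(k)$ yields the formula. Making this functoriality bookkeeping precise is the only delicate point of this step.

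\emph{Step 2 (regular identities transfer).} Let $\varphi\approx\psi$ be regular. Then both sides have the same variable set, so for any assignment both values are computed in the same fibre $F(k)$, on the same arguments $p_{i_jk}(a_j)$. Since $F(k)\models\varphi\approx\psi$, the two values coincide, and hence $\PL(F)\models\varphi\approx\psi$.

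\emph{Step 3 (every other identity fails).} If $\varphi\approx\psi$ is regular, it holds in $\PL(F)$ only if it holds in every fibre, because each fibre $F(i)$ is a subalgebra of $\PL(F)$. So the interesting case is an irregular identity: say a variable $x$ occurs in $\varphi$ but not in $\psi$. Let $\{i,j\}$ with $i<j$ be a copy of $\mathbf{S}_2$ inside $\mathbf{I}$, and choose $b\in F(j)$ and $a\in F(i)$. Assign $x\mapsto b$ and every other variable to $a$. By Step 1, $\varphi^{\A}$ lands in $F(j)$, since the join of the indices of its variables is $j$. On the other hand, $\psi^{\A}$ lands in $F(i)$, because all of its variables are sent to fibre $i$. (If $\psi$ has no variables besides $x$, this is impossible, as terms always contain a variable; if $\psi$ contains only variables other than $x$, the join is $i$.) The fibres are disjoint, so the two values differ and the identity fails. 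Symmetrically, the same argument applies if the offending variable occurs in $\psi$ but not in $\varphi$. One subtlety is that fibres must be nonempty, which is standard, since algebras are nonempty.

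I expect the main obstacle to be Step 1. Its statement is straightforward once it has been set up correctly, but the induction must handle subterms involving different subsets of variables, and it relies essentially on the composition law $p_{jk}\circ p_{ij}=p_{ik}$.
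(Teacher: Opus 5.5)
Your proof is correct. The term-operation lemma (via homomorphy of the transition maps and the composition law $p_{jk}\circ p_{ij}=p_{ik}$), the observation that each fibre is a subalgebra, and the separation of an irregular identity by sending the missing variable to the upper fibre of a copy of $\mathbf{S}_2$ together establish every part of the statement. The paper gives no proof of its own and simply cites P\l onka's Theorem I; your argument is essentially P\l onka's standard one.
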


For the next result, let us define a \emph{left-normal band with compatible operations} as a $\mathcal{L}$-algebra $\A$ for which there is a binary operation $\cdot$ on $A$ (not necessarily a term operation of $\A$) such that $\langle A, \cdot \rangle$ is a left-normal band and for all $n$-ary $g \in \mathcal{L}$ and every $a,b_1,...,b_n \in A$, the conditions $a \cdot g^\A (b_1,...,b_n) = ab_1,...,b_n$ and $g^\A (b_1,...,b_n) \cdot a = g^\A (b_1a,...,b_na)$ are satisfied.

\begin{theorem}\cite[Thm. II]{Plo67}; \cite[Thm. 7.1]{Romanowska92}\label{cappelletti}
    Let $\A$ be an $\mathcal{L}$-algebra. $\A$ is a left-normal band with compatible operations iff there exists a semilattice direct system $F$ of $\mathcal{L}$-algebras such that $\A = \PL (F)$. Moreover, if $\mathcal{V}$ is a strongly irregular variety of $\mathcal{L}$-algebras, $\A \in R(\mathcal{V})$ iff $\A = \PL (F)$, for some semilattice direct system $F$ whose fibres belong to $\mathcal{V}$.
\end{theorem}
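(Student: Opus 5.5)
The statement is P\l onka's second theorem. It has two halves: a representation result for left-normal bands with compatible operations, and a characterisation of $R(\mathcal{V})$ for strongly irregular $\mathcal{V}$. I would prove them in that order, using Theorem \ref{affarone} for the easy inclusion of the second half.

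\emph{Right to left, first half.} Given $\A=\PL(F)$ over $\mathbf{I}=\langle I,\lor\rangle$, I will define $a\cdot b := p_{i,i\lor j}(a)$ for $a\in F(i)$ and $b\in F(j)$. The verifications are routine computations with the functoriality of the transition maps ($p_{jk}\circ p_{ij}=p_{ik}$, $p_{ii}=\mathrm{id}$):
\begin{itemize}
\item idempotency is immediate;
\item associativity holds, with both sides equal to $p_{i,i\lor j\lor k}(a)$;
\item left-normality $abc=acb$ holds for the same reason;
\item the two compatibility conditions hold because each $p_{ij}$ is a homomorphism and operations are computed in the fibre indexed by the join.
\end{itemize}

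\emph{Left to right, first half.} Given $\langle A,\cdot\rangle$ left-normal with compatible operations, I will take $\mathbf{I}=A/\mathcal{D}$. By Clifford--McLean this is a semilattice, and in the left-normal case $a\,\mathcal{D}\,b$ iff $ab=a$ and $ba=b$. The fibres are the $\mathcal{D}$-classes. For $i\le j$ in $\mathbf{I}$, I set $p_{ij}(a):=ab$ for any $b$ in class $j$. The steps are then:
\begin{itemize}
\item $p_{ij}$ is well defined: left-normality gives $ab=abb'=ab'b=ab'$ when $b\,\mathcal{D}\,b'$.
\item $p_{ij}$ lands in class $j$.
\item Functoriality holds, and each fibre is closed under the operations. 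For closure I take $b_1,\dots,b_n\in F(i)$ and use $g(b_1,\dots,b_n)\cdot b_1 = g(b_1b_1,\dots,b_nb_1)=g(b_1,\dots,b_n)$ together with $b_1\cdot g(\dots)=b_1b_1\cdots b_n=b_1$. This is the first place where the compatibility conditions are really used.
\item $p_{ij}$ is a homomorphism, by the second compatibility condition.
\item The original operations agree with the P\l onka-sum operations. For $a_m\in F(i_m)$ and $k=\bigvee i_m$, compatibility gives $g(a_1,\dots,a_n)\cdot c = g(a_1c,\dots,a_nc)$ for $c$ in class $k$. I then need $g(a_1,\dots,a_n)$ to lie in class $k$, so that the left side equals $g(a_1,\dots,a_n)$. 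The first condition gives $x\cdot g(\bar a)=xa_1\cdots a_n$, from which the class of $g(\bar a)$ is computed to be $k$.
\end{itemize}
I expect this bookkeeping of $\mathcal{D}$-classes to be the main obstacle. I would resolve it by proving early the lemma that the class of $g(a_1,\dots,a_n)$ is the join of the classes of the $a_m$.

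\emph{Second half.} For the right-to-left direction, if the fibres lie in $\mathcal{V}$, Theorem \ref{affarone} gives that $\PL(F)$ satisfies every regular identity true in $\mathcal{V}$. (When $\mathbf{I}$ is trivial, $\PL(F)$ is a single fibre and lies in $\mathcal{V}$ itself.) Hence $\PL(F)\in R(\mathcal{V})$.

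For the left-to-right direction, let $x\approx\varphi(x,y)$ be the strongly irregular identity and set $x\cdot y:=\varphi(x,y)$, a term operation. The plan has four steps:
\begin{itemize}
\item First I show that $R(\mathcal{V})$ satisfies the band axioms, left-normality, and the compatibility identities for this term operation. Each of these is a regular identity that holds in $\mathcal{V}$, because both sides reduce to the first variable there. For example, $x\cdot g(y_1,\dots,y_n)\approx xy_1\cdots y_n$ has both sides equal to $x$ in $\mathcal{V}$ and the same variables on each side; essentiality of $y$ in $\varphi$ is what keeps the variable sets equal.
\item By the first half, every $\A\in R(\mathcal{V})$ is then a P\l onka sum.
\item Each fibre is a subalgebra satisfying $x\cdot y\approx x$ internally, since $ab=a$ within a $\mathcal{D}$-class.
\item It remains to show that each fibre satisfies every identity $\psi\approx\chi$ of $\mathcal{V}$. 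If the identity is irregular, I multiply both sides by all the variables, so that $\psi\cdot x_1\cdots x_m\approx\chi\cdot x_1\cdots x_m$ is regular and valid in $\mathcal{V}$. It therefore holds in $\A$, and on a fibre it reduces to $\psi\approx\chi$. This shows that the fibres belong to $\mathcal{V}$.
\end{itemize}
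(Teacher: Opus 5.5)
Your proposal is correct and follows the standard argument. The paper does not prove this theorem itself; it cites P\l onka and Romanowska, noting only in the remark after the statement that the representation is built over $\langle A,\cdot\rangle/\mathcal{D}$, with the $\mathcal{D}$-classes (left-zero bands with compatible operations) as fibres, which is exactly your construction. Your second half is likewise the usual route: you check the partition-function identities as regular identities of $\mathcal{V}$, recover every identity of $\mathcal{V}$ on each fibre by right-multiplying by all variables, and use Theorem~\ref{affarone} for the converse, treating the trivial-$\mathbf{I}$ case separately.
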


Interestingly, the left-to-right implication in the first statement of Theorem \ref{cappelletti} is proved by constructing a semilattice direct system over the algebra $\langle A, \cdot \rangle / \mathcal{D}$ whose fibres are the $\mathcal{D}$-equivalence classes, which are left-zero bands with compatible operations.

Of special interest, for the considerations that follow, is of course the regularisation of the variety $\mathcal{DML}$ of De Morgan lattices, studied and axiomatised in \cite{Hornischer, PaoliSzmucZirattu}. This is one of the proper subvarieties of De Morgan bisemilattices about which the present paper is focussing. Its members will be called \emph{regularised absorptive} De Morgan bisemilattices (in \cite{PaoliSzmucZirattu}, they were called \emph{semiabsorptive}) because they satisfy the restricted absorption identity $x \land (x \lor y) \approx x \land (x \lor \lnot y)$, on which see Theorem \ref{servosterzo} below.

\subsection{De Morgan-P\l onka sums}\label{inquina}

In a semilattice direct system, all unary operations map their arguments to elements in the same fibre. There have been some proposals to lift this overly restrictive condition, among which we mention Dolinka and Vin\v{c}i\'c's \emph{involutorial P\l onka sums} \cite{Dolvin}. Here, we focus on a different suggestion, recently advanced by T. Randriamahazaka in an attempt to represent algebras with a De Morgan negation that can be built via a certain form of a twist product construction out of algebras with a dualised type \cite{RandriamahazakaSL}.

Let $\mathcal{L}$ be a similarity type, and let $d$ be a function that assigns to each $n$-ary symbol in $\mathcal{L}$ an $n$-ary symbol in $\mathcal{L}$. $\mathcal{L}$ is said to be \textit{dualised} w.r.t. $d$ if, for all $f^n \in \mathcal{L}$, $d(d(f^n)) = f^n$. If $\mathcal{L}$ is a dualised type w.r.t. $d$, every $\mathcal{L}$-algebra $\mathbf{A} = \langle A, \lbrace f^{\mathbf{A}}\rbrace_{f \in \mathcal{L}} \rangle$ induces an $\mathcal{L}$-algebra $\mathbf{A}^d = \langle A, \lbrace d(f)^{\mathbf{A}} \rbrace_{f \in \mathcal{L}} \rangle$. A class of $\mathcal{L}$-algebras is called \emph{symmetric} if it contains $\mathbf{A}^d$ whenever it contains $\mathbf{A}$.

For the definition that follows, recall that any involutive semilattice $\langle I, \lor, \lnot \rangle$ can be viewed as an internal category in the category of $\mathbf{Z}_2$-sets, obtained by enriching the small category $\langle I, \lor \rangle$ by an action of $\mathbf{Z}_2$ on $I$ whose behaviour is determined by $\lnot$. Its orbit isomorphisms are generated by $a \simeq \lnot a$; the condition that $a \leq b$ implies $\lnot a \leq \lnot b$ guarantees equivariance under such isomorphisms.

Let $\mathcal{L}$ be a dualised type (w.r.t. $d$). 
An \textit{involutive semilattice direct system} of $\mathcal{L}$-algebras is a $\mathbf{Z}_2$-equivariant preorder functor $F$ from $\langle I, \lor, \lnot \rangle$, viewed as an internal category in the category of $\mathbf{Z}_2$-sets, to the algebraic category of $\mathcal{L}$-algebras, subjected to the condition that $F(\lnot i) = F(i)^d$. As for semilattice direct systems of $\mathcal{L}$-algebras, the $\mathcal{L}$-algebra $F(i)$ will be called a \emph{fibre} of the system, while, for $i,j \in I$ such that $i \leq_{\mathbf{I}}  j $, the homomorphism $F(\langle i,j \rangle): F(i) \to F(j)$ will be noted $p_{ij}$ and called a \emph{transition map}. Moreover, the involution isomorphisms from $F(i)$ to $F(\lnot i)$ will be noted $n_{i,\lnot i}$ and called \emph{dualising maps}.

Observe that our definition implies $n_{\lnot i,i} = n_{i,\lnot i}^{-1}$, and that $\mathbf{Z}_2$-equivariance implies that for all $i,j \in I$, $n_{j, \lnot j} \circ p_{ij} = p_{\lnot i,\lnot j} \circ n_{i,\lnot i}$ whenever $i \leq_{\mathbf{I}}  j $. Also, observe that semilattice direct systems can be identified with those involutive semilattice direct systems whose underlying involutive semilattice satisfies $\lnot x \approx x$.

If $F$ is an involutive semilattice direct system of $\mathcal{L}$-algebras, the \textit{De Morgan-P\l onka sum} over $F$ is the algebra $\DPL(F)$, of type $\mathcal{L} \cup  \{ \lnot\}$, whose universe is the disjoint union $A=\displaystyle\bigsqcup_{i\in I} F(i) $ and such that: 
\begin{itemize}
\item for every $n$-ary $f \in \mathcal{L}$ (with $n\geq 1$) and any $a_{1},\dots,a_{n}\in A$,
\[
f^{\DPL(F)}(a_{1},\dots,a_{n})=f^{F(i)}(p_{i_{1}i}(a_{1}),\dots,p_{i_{n}i}(a_{n})),
\]
where $a_{1}\in F(i_{1}),\dots,a_{n}\in F(i_{n})$ and $i=i_{1}\vee\dots\vee i_{n}$;
\item[] 
\item $\lnot^{\DPL(F)} a = n_{i,\lnot i}(a)$, where $a\in F(i)$.
\end{itemize}



In the interests of brevity, if $\mathcal{L}$ is as above, we denote by $\mathcal{L}^\ast$ the expanded type $\mathcal{L} \cup  \{ \lnot\}$. If $\mathcal{V}$ is an $\mathcal{L}$-variety, $\mathcal{V}^\ast$ will denote the $\mathcal{L}^\ast$-variety whose equational theory is obtained by closing the equational theory of $\mathcal{V}$ under the identities $\lnot \lnot x \approx x$ and $\lnot f(x_1,...,x_n) \approx d(f) (\lnot x_1, ..., \lnot x_n)$, for all $n$-ary $f \in \mathcal{L}$. Observe that if $F$ is an involutive semilattice direct system of $\mathcal{L}$-algebras with underlying involutive semilattice $\mathbf{I}$, and $i \in I$ is such that $\lnot^\mathbf{I} i = i$, the $\mathcal{L}^\ast$-algebra $F(i)^\lnot$ obtained by expanding $F(i)$ with the negation $\lnot x := n_{ii}(x)$ is a subalgebra of $\DPL(F)$ and belongs to $\mathcal{V}^\ast$.

If $\varphi$ is an $\mathcal{L}^\ast$-formula, let $Var^{+}(\varphi)$ and $Var^{-}(\varphi)$ stand for the set of variables that occur in $\varphi$ in the scope of an even, respectively odd, number of occurrences of $\lnot$. An $\mathcal{L}^\ast$-identity $\varphi \approx \psi$ is said to be \textit{balanced  regular} if $Var^{+}(\varphi) = Var^{+}(\psi)$ and $Var^{-}(\varphi) = Var^{-}(\psi)$. Given a strongly irregular variety $\mathcal{V}$ of type $\mathcal{L}^\ast$, its \textit{balanced regularisation} $B(\mathcal{V})$ is the variety that satisfies all and only the balanced regular identities valid in $\mathcal{V}$.

If $\mathbf{A} = \langle A, \lbrace f^{\mathbf{A}} \rbrace_{f \in \mathcal{L}} \rangle$ is an $\mathcal{L}$-algebra, the \textit{bilateralisation} of $\mathbf{A}$ is the algebra $\flat\mathbf{A}$, of type $\mathcal{L}^\ast$, with universe $A^2$ and such that:
\begin{itemize}
    \item $f^{\flat\mathbf{A}} (\langle a_1,b_1 \rangle,\dots,\langle a_n,b_n\rangle) = \langle f^{\mathbf{A}} (a_1,\dots,a_n), d(f)^{\mathbf{A}} (b_1,\dots,b_n)\rangle$, for every $n$-ary $f \in \mathcal{L}$ (with $n\geq 1$) and any $a_{1},\dots,a_{n}, b_{1},\dots,b_{n}\in A$;
    \item[]
    \item $\lnot^{\flat\mathbf{A}}\!\langle a,b \rangle = \langle b, a \rangle $.
\end{itemize}

It turns out that De Morgan-P\l onka sums preserve the balanced regular identities satisfied in all algebras that can be obtained from the fibres via bilateralisation (which can be seen as a general version of the twist product construction, see e.g. \cite{Moraschini}). This yields the following generalisation of Theorem \ref{affarone}:

\begin{theorem}
 \cite{RandriamahazakaSL} \label{th:DM-Plonka sums and balanced equations} If $\mathcal{L}$ is a dualised type and $F$ is an involutive semilattice direct system of $\mathcal{L}$-algebras with underlying involutive semilattice $\mathbf{I}$, then all balanced regular identities satisfied in the bilateralisations of all fibres of $F$ are satisfied in $\DPL(F)$, whereas, if the $4$-element involutive semilattice ${\bf IS_4}$ is a subalgebra of $\mathbf{I}$, identities that fail to be balanced regular are not satisfied in $\DPL(F)$.   
\end{theorem}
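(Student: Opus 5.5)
The natural route imitates P\l onka's proof of Theorem \ref{affarone}, working throughout with the De Morgan-P\l onka sum $\A=\DPL(F)$.

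\emph{Preservation of balanced regular identities.} First I will push negations inward. Using $\lnot\lnot x\approx x$ and $\lnot f(x_1,\dots,x_n)\approx d(f)(\lnot x_1,\dots,\lnot x_n)$, every $\mathcal{L}^\ast$-term $\varphi$ can be rewritten as an $\mathcal{L}$-term $\varphi'$ in literals $x$ and $\lnot x$. This rewriting does not change $Var^{+}$ or $Var^{-}$, since $x\in Var^{+}(\varphi)$ iff the literal $x$ occurs in $\varphi'$, and $x \in Var^{-}(\varphi)$ iff $\lnot x$ occurs.

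I then check two things:
\begin{enumerate}
\item $\A$ satisfies these rewriting identities. This follows from the definition of $\lnot^{\A}$ via the dualising maps $n_{i,\lnot i}$, together with equivariance $n_{j,\lnot j}\circ p_{ij}=p_{\lnot i,\lnot j}\circ n_{i,\lnot i}$ and $F(\lnot i)=F(i)^d$.
\item The bilateralisations $\flat F(i)$ satisfy them too, which is immediate from the definition.
\end{enumerate}

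Now fix an assignment $x\mapsto a_x\in F(i_x)$. By induction on $\mathcal{L}$-terms in literals, the value of $\varphi'$ lies in the fibre $k=\bigvee_{x\in Var^+}i_x\vee\bigvee_{x\in Var^-}\lnot i_x$. Moreover, it equals the value in $F(k)$ of the term obtained by evaluating $x$ at $p_{i_x k}(a_x)$ and $\lnot x$ at $p_{\lnot i_x,k}(n_{i_x,\lnot i_x}(a_x))$; this uses that transition maps are homomorphisms commuting with the dualising maps. Since $\varphi\approx\psi$ is balanced regular, the index $k$ is the same for both sides. The two evaluations in $F(k)$ assign elements $b_x$ to $x$ and $c_x$ to $\lnot x$ independently. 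I therefore interpret $x$ in $\flat F(k)$ as $\langle b_x,c_x\rangle$, so that $\lnot x$ takes value $\langle c_x,b_x\rangle$. The first coordinate of $\varphi'$ evaluated in $\flat F(k)$ is exactly the $F(k)$-value above, so equality in $\flat F(k)$ yields equality in $\A$.

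\emph{Failure of non-balanced identities.} Suppose ${\bf IS_4}\le\mathbf I$, say with elements $0<i,\lnot i<1$ where $\lnot 0=0$ and $\lnot1=1$. Take $\varphi\approx\psi$ not balanced regular, so without loss of generality some $x\in Var^{+}(\varphi)\setminus Var^{+}(\psi)$; the case of $Var^{-}$ is symmetric.
\begin{enumerate}
\item Assign every variable other than $x$ an element of $F(0)$, and assign $x$ an element of $F(i)$.
\item Then the value of $\varphi$ lies in fibre $i\vee(\text{possibly }\lnot i)$, which is $i$ or $1$.
\item The value of $\psi$ lies in fibre $0$ or $\lnot i$, since $x$ occurs there, if at all, only negatively.
\item These fibres differ, so the identity fails.
\end{enumerate}
This needs fibres to be nonempty, which holds because algebras have nonempty universes.

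The main obstacle is the index bookkeeping in the inductive step of the first part. Specifically, one must verify that computing in $\A$ and then transporting agrees with transporting the literal values first. This requires combining $F(\lnot i)=F(i)^d$ (so an operation $f$ in fibre $\lnot i$ corresponds to $d(f)$ in fibre $i$) with the compatibility of the $p$'s and $n$'s. I would handle this carefully by proving the auxiliary claim for arbitrary targets $k'\ge k$, so that the induction goes through.
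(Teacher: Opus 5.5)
Your argument is correct. The paper gives no proof of this statement: it is imported from Randriamahazaka's work, so there is no internal argument to match.

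What you give is the natural transposition of P\l onka's proof of Theorem \ref{affarone}. It follows the same pattern the paper itself uses in the proofs of Theorems \ref{allagamento} and \ref{mareggiata}: locate both sides in a common fibre, then appeal to validity in a single algebra. The contrast with those proofs is instructive.
\begin{itemize}
\item There, the common fibre $m$ is a fixpoint of $\lnot$. By equivariance the transported values of $x$ and $\lnot x$ are then linked by $n_{m,m}$, so the De Morgan lattice $F(m)^\lnot$ suffices.
\item In your general balanced case, $k$ need not be fixed, and the transported values $b_x, c_x$ are unrelated elements of $F(k)$. This is exactly why the hypothesis is stated for bilateralisations, and interpreting $x$ as $\langle b_x,c_x\rangle$ in $\flat F(k)$ captures it precisely.
\end{itemize}

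Two small remarks.

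First, suppose $x\in Var^{+}\setminus Var^{-}$ on both sides. Then $\lnot i_x$ need not lie below $k$, so $c_x$ is not defined by transport. Just choose $c_x\in F(k)$ arbitrarily, which is harmless because the literal $\lnot x$ occurs in neither $\varphi'$ nor $\psi'$. Treat $b_x$ symmetrically.

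Second, the step you flag as the main obstacle is easier than you fear. Once negations sit on literals, the induction involves only $\mathcal{L}$-operations. The $\lnot$-free reduct of $\DPL(F)$ is literally the P\l onka sum over $\langle I,\vee\rangle$, so this step is P\l onka's lemma verbatim. The interplay of the $p$'s, the $n$'s and $d$ is needed only once, to check that $\DPL(F)\vDash\lnot f(x_1,\dots,x_n)\approx d(f)(\lnot x_1,\dots,\lnot x_n)$.

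The second half, failure over $\mathbf{IS}_4$, is fine as written. It implicitly uses that $\mathcal{L}$ has no constants, so $\psi$ contains some variable and its fibre is indeed $0$ or $\lnot i$.
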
 

The paper \cite{RandriamahazakaSL} also contains a generalisation of Theorem \ref{cappelletti}. Let $\mathcal{L}$ be a dualised type. Let us define an \emph{a-involutive left-normal band with compatible operations} as an algebra $\A$, of type $\mathcal{L}^\ast$, such that the $\lnot$-free reduct of $\A$ is a left-normal band with compatible operations and with underlying band $\langle A, \cdot \rangle$, and such that $\langle A, \cdot, \lnot \rangle$ is an a-involutive band.

\begin{theorem}\label{cappelloni}
    Let $\A$ be a $\mathcal{L}^\ast$-algebra. $\A$ is an a-involutive left-normal band with compatible operations iff there exists an involutive semilattice direct system $F$ of $\mathcal{L}$-algebras such that $\A = \DPL (F)$. Moreover, if $\mathcal{V}$ is a strongly irregular symmetric variety of $\mathcal{L}$-algebras, $\A \in B(\mathcal{V}^\ast)$ iff $\A = \DPL (F)$, for some involutive semilattice direct system $F$ whose fibres belong to $\mathcal{V}$.
\end{theorem}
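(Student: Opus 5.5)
The statement has two parts: a representation theorem mirroring the first half of Theorem \ref{cappelletti}, and a characterisation of $B(\mathcal{V}^\ast)$ mirroring its second half. I will prove them in that order and reduce as much as possible to P\l onka's classical arguments.

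\emph{Right to left of the first part.} Let $F$ be an involutive semilattice direct system over $\mathbf{I}=\langle I,\lor,\lnot\rangle$. On $A=\bigsqcup_i F(i)$ put $a\cdot b := p_{i,i\lor j}(a)$ for $a\in F(i)$, $b\in F(j)$. The classical computations give that $\langle A,\cdot\rangle$ is a left-normal band and that the $\lnot$-free operations are compatible with $\cdot$, because the $\lnot$-free reduct of $\DPL(F)$ is just the P\l onka sum of the underlying (non-involutive) system. It remains to check the a-involutive identity, read with $x+y:=\lnot(\lnot x\cdot\lnot y)$, which amounts to $\lnot(a\cdot b)=\lnot a\cdot\lnot b$. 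This follows from equivariance, $n_{k,\lnot k}\circ p_{ik}=p_{\lnot i,\lnot k}\circ n_{i,\lnot i}$, together with $\lnot(i\lor j)=\lnot i\lor\lnot j$ in $\mathbf{I}$.

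\emph{Left to right of the first part.} Given $\A$ with its band $\cdot$, take $I=A/\mathcal{D}$, which is a semilattice by the Clifford--McLean theorem. As in Theorem \ref{cappelletti}:
\begin{itemize}
\item the fibres are the $\mathcal{D}$-classes, which are left-zero bands closed under the $\lnot$-free operations by compatibility;
\item the transition maps are $p_{ij}(a)=a\cdot b$ for any $b$ in class $j$, which is well defined by left-normality.
\end{itemize}
The new ingredient is that the identity $\lnot(xy)\approx\lnot x\lnot y$, together with $\lnot\lnot x\approx x$, makes $\lnot$ an automorphism of $\langle A,\cdot\rangle$ of order dividing $2$. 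Hence $\lnot$ preserves $\mathcal{D}$ and induces an involution on $I$ that is a semilattice automorphism, so $I$ becomes an involutive semilattice. I then set $n_{i,\lnot i}:=\lnot\restriction F(i)$.

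Two things must be verified at this point:
\begin{itemize}
\item $n_{i,\lnot i}$ is an isomorphism $F(i)\to F(\lnot i)^d$, equivalently $F(\lnot i)=F(i)^d$ up to the identification by $n$;
\item equivariance holds with the $p_{ij}$.
\end{itemize}
Equivariance is immediate from $\lnot(ab)=\lnot a\lnot b$. The isomorphism requires the De Morgan-type identities $\lnot f(\bar x)\approx d(f)(\lnot\bar x)$. I expect these are not literally in the hypothesis, so I must recover them from compatibility and a-involutivity; this is the step I expect to be the main obstacle. If that fails, I will read the intended definition as including these dualisation identities, since otherwise $\DPL$ would not satisfy them either. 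Finally, $\A\cong\DPL(F)$ follows because $f(\bar a)=f(\bar a)\cdot(a_1\cdots a_n)$ lies in the fibre of $i_1\lor\dots\lor i_n$, and compatibility gives $f(\bar a)=f(a_1c,\dots,a_nc)$ with $c$ in that fibre. That is exactly the $\DPL$ formula.

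\emph{Second part.} If every fibre lies in $\mathcal{V}$, then every bilateralisation $\flat F(i)$ lies in $\mathcal{V}^\ast$; this uses symmetry of $\mathcal{V}$ for the second coordinate, and the $\lnot$-identities can be checked directly. Theorem \ref{th:DM-Plonka sums and balanced equations} then gives $\DPL(F)\in B(\mathcal{V}^\ast)$.

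Conversely, let $\A\in B(\mathcal{V}^\ast)$. Strong irregularity supplies a term $\varphi(x,y)$ with $x\approx\varphi(x,y)$ in $\mathcal{V}$. Following P\l onka, I define $x\cdot y:=\varphi(x,\,\varphi(y,\lnot y))$ or a similar term in which $y$ occurs with both polarities, so as to balance the polarities of $y$.
\begin{itemize}
\item The band, left-normality, compatibility and a-involutive laws for $\cdot$ are then balanced regular identities true in $\mathcal{V}^\ast$, hence true in $\A$.
\item By the first part, $\A=\DPL(F)$, and each fibre is a left-zero class.
\item Since $x\approx\varphi(x,y)$ holds in $\mathcal{V}$, every identity of $\mathcal{V}$ becomes, after multiplying both sides by all of its variables, a balanced regular consequence that holds in $\A$ and reduces to the original identity inside a single fibre. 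This places each fibre in $\mathcal{V}$.
\end{itemize}
Choosing a $\varphi$-based term that is genuinely balanced regular in both polarities is the second delicate point.
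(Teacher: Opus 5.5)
\textbf{There is a genuine gap in the second part.} The first part is fine and follows the route the paper indicates: $\lnot$ is an involutive automorphism of $\langle A,\cdot\rangle$, so $\mathcal{D}$ is a congruence of $\langle A,\cdot,\lnot\rangle$, and the system is built over the quotient.

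The gap is in the left-to-right direction of the second part. Your band term $x\cdot y:=\varphi(x,\varphi(y,\lnot y))$ is not idempotent in $B(\mathcal{V}^\ast)$. The same holds for any term in which $y$ occurs under an odd number of negations. Associativity, left-normality, compatibility and a-involutivity do happen to be balanced for your term. Idempotence is not, and that is fatal:
\begin{itemize}
\item In $x\cdot x\approx x$, the left side has $x\in Var^-$ while the right side has $Var^-=\emptyset$. So the identity is not balanced regular, and $\A$ need not satisfy it.
\item Semantically, in a De Morgan--P\l onka sum your term sends $a\in F(i)$, $b\in F(j)$ into the fibre $i\lor j\lor\lnot j$. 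Hence $a\cdot a$ lands in $i\lor\lnot i$, not in $i$.
\item Concretely, take $\mathcal{V}=\mathcal{DL}$, $\varphi(x,y)=x\land(x\lor y)$ and $\A=\mathbf{IS}_4\in B(\mathcal{DML})$. Your term evaluates to $x\lor y\lor\lnot y$, so $\mathbf{j}\cdot\mathbf{j}=\mathbf{k}\neq\mathbf{j}$.
\end{itemize}
So $\langle A,\cdot\rangle$ is not a band, and the appeal to the first part collapses.

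\textbf{The fix.} The remedy is simpler than what you were looking for. Since $\mathcal{V}$ has type $\mathcal{L}$, the witness $\varphi$ is negation-free, so take $x\cdot y:=\varphi(x,y)$.
\begin{itemize}
\item The band, left-normality and compatibility laws become negation-free regular identities valid in $\mathcal{V}$, hence balanced regular.
\item The identities $\lnot(x\cdot y)\approx\lnot x\cdot\lnot y$, $\lnot\lnot x\approx x$ and $\lnot f(\bar{x})\approx d(f)(\lnot\bar{x})$ have the same positive and negative variables on both sides and hold in $\mathcal{V}^\ast$.
\item For an identity $\sigma\approx\tau$ of $\mathcal{V}$, the identity $\sigma\cdot(x_1\cdots x_n)\approx\tau\cdot(x_1\cdots x_n)$ is negation-free and regular. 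Your P\l onka-style fibre argument then goes through unchanged.
\end{itemize}
There is no polarity to balance; inserting $\lnot y$ is exactly what breaks the construction.

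\textbf{On the dualisation identities in the first part.} Your suspicion is correct: they cannot be derived from compatibility and a-involutivity, so your fallback reading is forced rather than optional. Here is a counterexample under the definition as literally stated:
\begin{itemize}
\item Let $\mathcal{L}=\{\land,\lor\}$ with $d$ swapping the two operations.
\item Take the two-element lattice $\{0,1\}$ with $\lnot$ the identity and $a\cdot b:=a$.
\item This is a left-zero, hence left-normal, band. Compatibility holds trivially, and $\lnot(a\cdot b)=a=\lnot a\cdot\lnot b$.
\item Yet $\lnot(0\land 1)=0\neq 1=\lnot 0\lor\lnot 1$, whereas every $\DPL(F)$ satisfies $\lnot f(\bar{x})\approx d(f)(\lnot\bar{x})$.
\end{itemize}
So the definition must include these identities. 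In the second part they then come for free, being balanced regular and valid in $\mathcal{V}^\ast$.
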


The left-to-right implication in the first statement of Theorem \ref{cappelloni} is proved by showing that, under the given assumption, the Green's relation $\mathcal{D}$ is an a-involutive band congruence, so that it is possible to construct an involutive semilattice direct system over the involutive semilattice quotient $\langle A, \cdot, \lnot \rangle / \mathcal{D}$.

An especially interesting case of Theorem \ref{cappelloni} arises when $\mathcal{V}$ is the variety $\mathcal{DL}$ of distributive lattices, and consequently, $\mathcal{V}^\ast$ is the variety $\mathcal{DML}$ of De Morgan lattices. According to Theorem \ref{cappelloni}, $B(\mathcal{DML})$ consists precisely of the De Morgan-P\l onka sums of distributive lattices (see Theorem \ref{accozzato} below).


\section{I-bands and involutive semilattices}\label{invosemi}

In this section, we establish certain facts about i-bands and involutive semilattices that will be useful in our study of varieties of De Morgan bisemilattices. For a start, we characterise the identities of type $\langle 2, 1 \rangle$ that are satisfied by each variety of involutive semilattices.

In \cite{crvdol} it was proved that there are four subdirectly irreducible members of $\mathcal{ISL}$, which we call ${\bf IS_1}$, ${\bf IS_2}$, ${\bf IS_3}$, ${\bf IS_4}$, displaying them in Figure \ref{involtini}. 

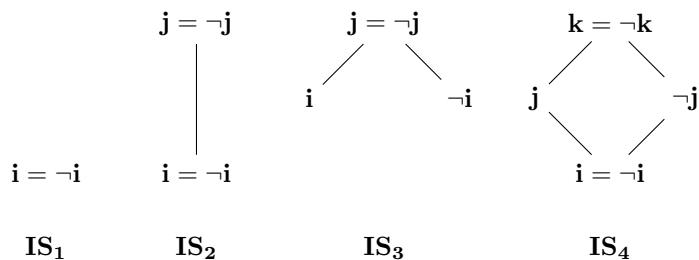
\begin{figure}[ht]
\begin{tikzpicture}[scale=.5]
    \begin{scope}[shift={(0,0)}] 
  \node (zero) at (0,-2) {${\bf i} = \lnot {\bf i}$};
  \node (tag) at (0,-4) {${\bf IS_1}$};  
    \end{scope}

    \begin{scope}[shift={(4,0)}] 
  \node (one) at (0,2) {${\bf j} = \lnot {\bf j}$};
  \node (zero) at (0,-2) {${\bf i} = \lnot {\bf i}$};
  \node (tag) at (0,-4) {${\bf IS_2}$};
  \draw (one) -- (zero);
    \end{scope}

    \begin{scope}[shift={(9,0)}] 
  \node (one) at (0,2) {${\bf j} = \lnot {\bf j}$};
  \node (a) at (-2,0) {${\bf i}$};
  \node (b) at (2,0) {$\lnot {\bf i}$};
  \node (tag) at (0,-4) {${\bf IS_3}$};
  \draw (a) -- (one) -- (b) ;
    \end{scope}

    \begin{scope}[shift={(15,0)}] 
  \node (one) at (0,2) {${\bf k} = \lnot {\bf k}$};
  \node (a) at (-2,0) {${\bf j}$};
  \node (b) at (2,0) {$\lnot {\bf j}$};
  \node (zero) at (0,-2) {${\bf i} = \lnot {\bf i}$};
  \node (tag) at (0,-4) {${\bf IS_4}$};
  \draw (zero) -- (a) -- (one) -- (b) -- (zero);
    \end{scope}

\end{tikzpicture}
\caption{The subdirectly irreducible involutive semilattices}
\label{involtini}
\end{figure}
Altogether, there are precisely five subvarieties of $\mathcal{ISL}$, depicted in Figure~\ref{fig:subvarieties-ISL} together with an axiomatisation of each of them relative to $\mathcal{ISL}$  \cite{Dolinka2000}. $\mathcal{RISL}$ is the variety of \textit{regular} involutive semilattices, generated by ${\bf IS_2}$. $\mathcal{BISL}$ is the variety of \textit{bipolar} involutive semilattices, generated by ${\bf IS_3}$, and $\mathcal{RBISL}$ is the variety of \textit{regular bipolar} involutive semilattices, generated by the class $ \{ \mathbf{IS}_2, \mathbf{IS}_3 \}$. The whole variety $\mathcal{ISL}$ is generated by ${\bf IS_4}$. Finally, $\mathcal{T}$ is the trivial variety of type $\langle 2, 1 \rangle$, generated by ${\bf IS_1}$.

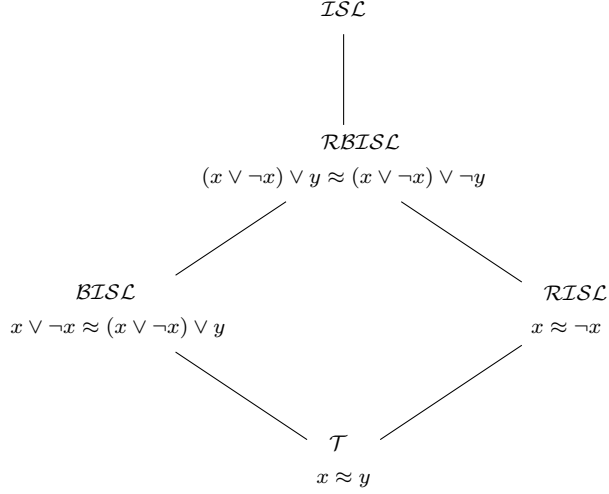
\begin{figure}[ht]
    \centering
\begin{tikzpicture}[scale=.5]
  \node (new) at (0,8) {$\footnotesize 
  \begin{aligned}
   & \mathcal{ISL}\\
  \end{aligned}
  $};
  \node (one) at (0,4)  {$\footnotesize 
  \begin{aligned}
   & \mathcal{RBISL}\\
       (x \lor \lnot x) \lor y  & \approx (x \lor \lnot x) \lor \lnot y\\
  \end{aligned}
  $};
  \node (a) at (-6,0)  {$\footnotesize 
  \begin{aligned}
   & \mathcal{BISL}\\
       x \lor \lnot x  & \approx (x \lor \lnot x) \lor y\\
  \end{aligned}
  $};
  \node (b) at (6,0)   {$\footnotesize 
  \begin{aligned}
   & \mathcal{RISL}\\
       x & \approx \lnot x\\
  \end{aligned}
  $};
  \node (zero) at (0,-4)   {$\footnotesize 
  \begin{aligned}
   & \mathcal{T}\\
       x & \approx y\\
  \end{aligned}
  $};
  \draw (zero) -- (a) -- (one) -- (b) -- (zero);
  \draw (new) -- (one);    
  
\end{tikzpicture}
    
\caption{The lattice of subvarieties of $\mathcal{ISL}$ and their relative axiomatisations}
    \label{fig:subvarieties-ISL}
\end{figure}

We now proceed to characterising the identities satisfied in each one of these subvarieties. Recall from Section \ref{plonkplink} that an $\mathcal{L}$-identity $\varphi \approx \psi$ is said to be regular iff $Var(\varphi)= Var(\psi)$, and from Section \ref{inquina} that, if $\mathcal{L}$ is a dualised type, an $\mathcal{L}^\ast$-identity $\varphi \approx \psi$ is said to be balanced regular iff $Var^+(\varphi)= Var^+(\psi)$ and $Var^-(\varphi)= Var^-(\psi)$.  To begin with, observe that $\mathcal{RISL}$ is term-equivalent to the variety of semilattices---namely, to the regularisation of the trivial variety of type $\langle 2, 1 \rangle$. By Theorem \ref{cappelletti}, $\mathcal{RISL}$ is the class of P\l onka sums of trivial algebras of type $\langle 2, 1 \rangle$. In sum:

\begin{fact}
\label{fact:characterization-IS2}The following conditions are equivalent for any identity $\varphi \approx \psi$ of type $\langle 2, 1 \rangle$:
\begin{enumerate}
    \item ${\bf IS_2} \vDash \varphi \approx \psi$;
    \item $\mathcal{RISL} \vDash \varphi \approx \psi$;
    \item $\varphi \approx \psi$ is regular.
\end{enumerate}    
\end{fact}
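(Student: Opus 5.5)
The plan is to prove the cycle (1)$\Rightarrow$(3)$\Rightarrow$(2)$\Rightarrow$(1).

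\emph{(2)$\Rightarrow$(1).} This is immediate, since ${\bf IS_2} \in \mathcal{RISL}$.

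\emph{(1)$\Rightarrow$(3).} Argue by contraposition. Suppose $\varphi \approx \psi$ is not regular, say $x \in Var(\varphi)\setminus Var(\psi)$. Evaluate $x$ at ${\bf j}$ and every other variable at ${\bf i}$. In ${\bf IS_2}$ negation is the identity and $\lor$ is the join of the chain ${\bf i} < {\bf j}$. A straightforward induction on terms therefore shows that any term takes value ${\bf j}$ exactly when it contains a variable assigned ${\bf j}$, and value ${\bf i}$ otherwise. Hence $\varphi$ evaluates to ${\bf j}$ while $\psi$ evaluates to ${\bf i}$, so the identity fails in ${\bf IS_2}$.

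\emph{(3)$\Rightarrow$(2).} Let $\varphi \approx \psi$ be regular. In $\mathcal{RISL}$ the identity $x \approx \lnot x$ holds, and by idempotence every term is equivalent to the term obtained by erasing all negations. Erasing negations leaves $Var$ unchanged, so it suffices to show that semilattices satisfy every regular $\lor$-identity. This holds because in a semilattice, using associativity, commutativity and idempotence, each term equals the join of its set of variables. Two terms with the same variable set are therefore equal.

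As an alternative route for (3)$\Rightarrow$(2), one can use the observation preceding the statement. $\mathcal{RISL}$ is term-equivalent to the regularisation of the trivial variety, which is strongly irregular since it satisfies $x \approx x\lor y$. Theorem \ref{affarone} and Theorem \ref{cappelletti} then show that every regular identity holds in all Płonka sums of trivial algebras, that is, in all of $\mathcal{RISL}$.

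No step poses a real obstacle. The only point needing care is translating between the $\langle 2,1\rangle$ type and the pure semilattice type: erasing $\lnot$ must preserve both validity in $\mathcal{RISL}$ and regularity. Both hold because $\lnot$ is interpreted as the identity and does not change the variable set.
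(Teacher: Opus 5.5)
Your proof is correct, but your main route differs from the paper's. The paper gives no direct argument. It notes that $\mathcal{RISL}$ is term-equivalent to semilattices, i.e.\ to the regularisation of the trivial variety of type $\langle 2,1\rangle$. By Theorem~\ref{cappelletti}, its members are then exactly the P\l onka sums of trivial algebras, so its identities are exactly the regular ones; this is your ``alternative route''. The equivalence of (1) and (2) rests on the fact, recalled from the literature (\cite{Dolinka2000}), that ${\bf IS_2}$ generates $\mathcal{RISL}$.

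Your cycle is self-contained and more elementary. The explicit assignment into ${\bf IS_2}$ and the normal form ``every negation-free term equals the join of its variables'' replace P\l onka's theorem. As a by-product, you prove $V({\bf IS_2}) = \mathcal{RISL}$ rather than assuming it. What the paper's route buys is uniformity with the rest of Section~\ref{invosemi}, where $\mathcal{RISL}$, $\mathcal{BISL}$, $\mathcal{RBISL}$, $\mathcal{ISL}$ are identified with $R(\mathcal{T})$, $Bip(\mathcal{T})$, $R(Bip(\mathcal{T}))$, $B(\mathcal{T})$.

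One small slip: erasing negations is justified by the identity $x \approx \lnot x$ applied to subterms, not by idempotence.
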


Independent proofs of this result were also given in \cite[Thm. 2.3.2]{BonzioPaoliPraBaldi}, \cite{CreDolinkaVinc2000} and \cite{Dolinka2000}. The next fact is proved in \cite[Prop. 2]{RandriAJL}, although it can be derived from Theorem \ref{th:DM-Plonka sums and balanced equations} by observing that $\mathbf{IS}_4$ can be represented as a De Morgan-P\l onka sum of trivial algebras of type $\langle 2 \rangle$.


\begin{fact}
\label{fact:characterization-IS4}The following conditions are equivalent for any identity $\varphi \approx \psi$ of type $\langle 2, 1 \rangle$:
\begin{enumerate}
    \item ${\bf IS_4} \vDash \varphi \approx \psi$;
    \item $\mathcal{ISL} \vDash \varphi \approx \psi$;
    \item $\varphi \approx \psi$ is balanced regular.
\end{enumerate}     
\end{fact}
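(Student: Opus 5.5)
We prove the cycle (2)$\Rightarrow$(1)$\Rightarrow$(3)$\Rightarrow$(2). The step (2)$\Rightarrow$(1) is immediate, since ${\bf IS_4} \in \mathcal{ISL}$. For (1)$\Leftrightarrow$(2), recall that $\mathcal{ISL} = V({\bf IS_4})$ (Figure~\ref{fig:subvarieties-ISL}). The substantive work therefore lies in showing that the identities of ${\bf IS_4}$ are exactly the balanced regular ones.

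For (3)$\Rightarrow$(1), we represent ${\bf IS_4}$ as a De Morgan-P\l onka sum. Take the dualised type $\mathcal{L} = \langle 2 \rangle$ with $d(\lor) = \lor$. Index an involutive semilattice direct system by ${\bf IS_4}$ itself, and let every fibre be the trivial one-element algebra. Transition maps and dualising maps are then forced, and $\mathbf{Z}_2$-equivariance holds trivially. Computing $\DPL(F)$, the join of two elements is the element of the fibre indexed by the join of their indices, and negation sends the fibre at $i$ to the fibre at $\lnot i$. Hence $\DPL(F) \cong {\bf IS_4}$. The bilateralisation of a trivial algebra is again trivial, so it satisfies every identity. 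By the first half of Theorem~\ref{th:DM-Plonka sums and balanced equations}, every balanced regular identity therefore holds in ${\bf IS_4}$.

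For (1)$\Rightarrow$(3), apply the second half of Theorem~\ref{th:DM-Plonka sums and balanced equations} to the same system. Its index ${\bf IS_4}$ contains ${\bf IS_4}$ as a subalgebra, so any identity that is not balanced regular fails in $\DPL(F) \cong {\bf IS_4}$.

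If one prefers to avoid relying on that half of the theorem, a direct argument works. Suppose, say, $x \in Var^+(\varphi) \setminus Var^+(\psi)$. Assign ${\bf i}$ to every variable other than $x$ and ${\bf j}$ to $x$. Each positive occurrence of $x$ then contributes ${\bf j}$ and each negative occurrence contributes $\lnot {\bf j}$. The term $\varphi$ evaluates to ${\bf j}$ or ${\bf k}$, since at least one positive occurrence contributes ${\bf j}$. The term $\psi$ evaluates to ${\bf i}$ or $\lnot{\bf j}$, since it has no positive occurrence of $x$. This requires first verifying by induction that the value of a term in ${\bf IS_4}$ is the join of the contributions of its variable occurrences, with polarity tracked. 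That verification is the main, though routine, obstacle: it uses the identities $\lnot(x \lor y) \approx \lnot x \lor \lnot y$ and $\lnot\lnot x \approx x$ to push negations down to the variables.
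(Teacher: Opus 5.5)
Your argument is correct and follows the route the paper indicates: realise $\mathbf{IS}_4$ as the De Morgan-P\l onka sum of trivial algebras of type $\langle 2\rangle$ over $\mathbf{IS}_4$ itself, and obtain both (3)$\Rightarrow$(1) and (1)$\Rightarrow$(3) from the two halves of Theorem \ref{th:DM-Plonka sums and balanced equations}, with (1)$\Leftrightarrow$(2) coming from $\mathcal{ISL}=V(\mathbf{IS}_4)$. Your optional direct counterexample for (1)$\Rightarrow$(3) is also sound, and it mirrors the polarity-tracking evaluation formula the paper uses in its proof of Proposition \ref{prop:characterization-IS3}.
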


For the next results, we need to introduce new concepts concerning the syntactic structure of identities in dualised types. If $\mathcal{L}$ is such a type, an $\mathcal{L}$-identity $\varphi \approx \psi$ is said to be \emph{bipolar} if $Var^+(\varphi) \cap Var^-(\varphi) \neq \emptyset$ and $Var^+(\psi) \cap Var^-(\psi) \neq \emptyset$. $\varphi \approx \psi$ is \emph{bipolarly balanced} if it is either bipolar or balanced regular, and is \emph{regular bipolarly balanced} if it is either bipolar and regular, or balanced regular.

\begin{proposition}
\label{prop:characterization-IS3}
The following conditions are equivalent for any identity $\varphi \approx \psi$ of type $\langle 2, 1 \rangle$:
\begin{enumerate}
    \item ${\bf IS_3} \vDash \varphi \approx \psi$;
    \item $\mathcal{BISL} \vDash \varphi \approx \psi$;
    \item $\varphi \approx \psi$ is bipolarly balanced.
\end{enumerate}      
\end{proposition}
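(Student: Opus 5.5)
(1)⇔(2) is immediate: $\mathcal{BISL}$ is generated by $\mathbf{IS}_3$. So the work is (1)⇔(3), and the plan is a direct evaluation of terms in $\mathbf{IS}_3$.

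The key step is a closed formula for the value of an arbitrary term $\varphi$ of type $\langle 2,1\rangle$ under an assignment $v$ into $\mathbf{IS}_3=\{\mathbf{i},\lnot\mathbf{i},\mathbf{j}\}$. Here $\lnot$ is an order-automorphism and $\mathbf{j}$ is absorbing for $\lor$. First push negations inward, using the $\mathcal{ISL}$ identities $\lnot(x\lor y)\approx\lnot x\lor\lnot y$ and $\lnot\lnot x\approx x$. This gives $\mathcal{ISL}\vDash\varphi\approx\bigvee_{x\in Var^+(\varphi)}x\lor\bigvee_{x\in Var^-(\varphi)}\lnot x$; the normal form also recovers Fact~\ref{fact:characterization-IS4}. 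Since $\mathbf{IS}_3$ is a join of elements of $\{\mathbf{i},\lnot\mathbf{i},\mathbf{j}\}$, we get:
\[
\varphi^{\mathbf{IS}_3}[v]=\begin{cases}\mathbf{i} & \text{if } v(x)=\mathbf{i} \text{ for } x\in Var^+(\varphi) \text{ and } v(x)=\lnot\mathbf{i} \text{ for } x\in Var^-(\varphi),\\ \lnot\mathbf{i} & \text{symmetrically},\\ \mathbf{j} & \text{otherwise}.\end{cases}
\]
In particular, if $\varphi$ is bipolar then $\varphi$ is constantly $\mathbf{j}$ in $\mathbf{IS}_3$, since a variable in both $Var^+$ and $Var^-$ contributes both $v(x)$ and $\lnot v(x)$.

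For (3)⇒(1): if $\varphi\approx\psi$ is balanced regular, it holds in $\mathcal{ISL}$ by Fact~\ref{fact:characterization-IS4}, hence in $\mathbf{IS}_3$. If it is bipolar, both sides are constantly $\mathbf{j}$.

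For (1)⇒(3), argue by contraposition. Suppose $\varphi\approx\psi$ is neither bipolar nor balanced regular. There are two cases.
\begin{enumerate}
\item Exactly one side is bipolar, say $\psi$. Then $\psi\equiv\mathbf{j}$. Since $\varphi$ is not bipolar, $Var^+(\varphi)\cap Var^-(\varphi)=\emptyset$. Send $Var^+(\varphi)$ to $\mathbf{i}$, $Var^-(\varphi)$ to $\lnot\mathbf{i}$, and other variables arbitrarily. Then $\varphi$ evaluates to $\mathbf{i}\neq\mathbf{j}$.
\item Neither side is bipolar, but, say, $Var^+(\varphi)\neq Var^+(\psi)$ (the $Var^-$ case is symmetric). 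Pick $x$ in the symmetric difference, say $x\in Var^+(\varphi)\setminus Var^+(\psi)$. Assign $Var^+(\psi)\mapsto\mathbf{i}$ and $Var^-(\psi)\mapsto\lnot\mathbf{i}$, which is consistent because $\psi$ is not bipolar, so $\psi=\mathbf{i}$. Then either $x\notin Var(\psi)$, so it is free and we set $x\mapsto\mathbf{j}$, making $\varphi=\mathbf{j}$; or $x\in Var^-(\psi)$, so $v(x)=\lnot\mathbf{i}$ while $x$ occurs positively in $\varphi$, giving $\varphi\neq\mathbf{i}$.
\end{enumerate}
Either way the identity fails. In case 2, if the symmetric difference element lies in $Var^+(\psi)\setminus Var^+(\varphi)$ instead, swap the roles of $\varphi$ and $\psi$.

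The main obstacle is only bookkeeping: making sure the evaluation formula is stated carefully, in particular that variables outside $Var(\psi)$ are free to receive $\mathbf{j}$.
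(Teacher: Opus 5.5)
Your proposal is correct and is essentially the paper's proof. You use the same join normal form $\bigvee_{x\in Var^+(\varphi)}h(x)\lor\bigvee_{y\in Var^-(\varphi)}\lnot h(y)$ in $\mathbf{IS}_3$ for (3)$\Rightarrow$(1), and for (1)$\Rightarrow$(3) the same counterexample assignment: positive variables of a non-bipolar side go to $\mathbf{i}$, negative ones to $\lnot\mathbf{i}$, and a variable outside that side to $\mathbf{j}$. Your split into ``exactly one side bipolar'' versus ``neither side bipolar'' organises the cases a little more explicitly than the paper's four set-difference cases. You also rightly claim only that the other side avoids $\mathbf{i}$, rather than that it equals $\mathbf{j}$ (e.g.\ for $\lnot x\approx x$ it evaluates to $\lnot\mathbf{i}$).
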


\begin{proof}
In light of the above, it will suffice to prove the equivalence of (1) and (3).

\underline{(3) implies (1).} Observe that for any formula $\delta$ and any $h \in Hom(\mathbf{Fm}(\langle 2, 1 \rangle), {\bf IS_3})$, the following can be proved by induction on the complexity of $\delta$:

\[
h(\delta) = \left (\bigvee\limits_{x \in Var^{+}(\delta)} h(x) \right ) \lor \left (\bigvee\limits_{y \in Var^{-}(\delta)} \lnot h(y) \right )
\]

\medskip

\noindent It follows that if $\varphi \approx \psi$ is bipolar, then for all $h \in Hom(\mathbf{Fm}(\langle 2, 1 \rangle), {\bf IS_3})$, $h(\varphi) = {\bf j} = h(\psi)$, whereas if $\varphi \approx \psi$ is balanced regular then for all such $h$, $h(\varphi) = h(\psi)$. Whence, ${\bf IS_3} \vDash \varphi \approx \psi$.

\strut

\underline{(1) implies (3).}  Assume that $\varphi \approx \psi$ is neither bipolar nor balanced. Suppose without loss of generality that $\varphi$ is not bipolar. By the fact that the identity is not balanced, we know that either
(i) $Var^{+}(\psi) \setminus Var^{+}(\varphi) \neq \emptyset$, or
(ii) $Var^{-}(\psi) \setminus Var^{-}(\varphi) \neq \emptyset$, or
(iii) $Var^{+}(\varphi) \setminus Var^{+}(\psi) \neq \emptyset$, or
(iv) $Var^{-}(\varphi) \setminus Var^{-}(\psi) \neq \emptyset$.
In cases (i) and (ii), consider $Hom(\mathbf{Fm}(\langle 2, 1 \rangle), {\bf IS_3})$ such that:

\[
h(x) =
\begin{cases}
{\bf i} & \text{if } x \in Var^{+}(\varphi)\\
{\bf \lnot i} & \text{if } x \in Var^{-}(\varphi)\\
{\bf j} & \text{otherwise } \\
\end{cases}
\]

\medskip

\noindent Note that $h$ is well-defined because $\varphi$ is not bipolar. By induction on the complexity of the formula, we can corroborate that $h(\varphi) = {\bf i}$ while $h(\psi) = {\bf j}$. Cases (iii) and (iv) are analogous, replacing $\varphi$ with $\psi$. The case in which $\psi$ is not bipolar is treated similarly. Whence, ${\bf IS_3} \nvDash \varphi \approx \psi$.
\end{proof}

\begin{corollary}\label{stritto}
The following conditions are equivalent for any identity $\varphi \approx \psi$ of type $\langle 2, 1 \rangle$:
\begin{enumerate}
    \item $\mathbf{IS_2} \times \mathbf{IS_3}  \vDash \varphi \approx \psi$;
    \item $\mathcal{RBISL} \vDash \varphi \approx \psi$;
    \item $\varphi \approx \psi$ is regular bipolarly balanced.
\end{enumerate}       
\end{corollary}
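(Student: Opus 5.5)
The plan is to reduce everything to Fact \ref{fact:characterization-IS2} and Proposition \ref{prop:characterization-IS3} by way of the product. We know $\mathcal{RBISL}$ is generated by $\{\mathbf{IS}_2, \mathbf{IS}_3\}$. A variety generated by a finite set of algebras is the same as the variety generated by their direct product, because $V(\mathbf{A}\times\mathbf{B}) = V(\mathbf{A},\mathbf{B})$. Here both $\mathbf{IS}_2$ and $\mathbf{IS}_3$ are homomorphic images of $\mathbf{IS}_2 \times \mathbf{IS}_3$ via the projections; they are nonempty, so the projections are onto. Therefore $\mathbf{IS}_2 \times \mathbf{IS}_3 \vDash \varphi \approx \psi$ holds iff $\mathcal{RBISL} \vDash \varphi \approx \psi$, which gives (1)$\Leftrightarrow$(2). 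Moreover, an identity holds in a product iff it holds in each factor. So (1) is equivalent to the conjunction $\mathbf{IS}_2 \vDash \varphi\approx\psi$ and $\mathbf{IS}_3 \vDash \varphi\approx\psi$.

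By Fact \ref{fact:characterization-IS2} and Proposition \ref{prop:characterization-IS3}, this conjunction holds iff $\varphi\approx\psi$ is regular and bipolarly balanced. It remains to check, purely syntactically, that ``regular and (bipolar or balanced regular)'' coincides with ``(bipolar and regular) or balanced regular''. Distributing gives (regular and bipolar) or (regular and balanced regular). The second disjunct reduces to ``balanced regular'', since balanced regularity implies regularity. Indeed, $Var(\delta) = Var^+(\delta) \cup Var^-(\delta)$ for every formula $\delta$. Hence if $Var^{\pm}(\varphi) = Var^{\pm}(\psi)$, then $Var(\varphi) = Var(\psi)$. This yields exactly the definition of regular bipolarly balanced, giving (1)$\Leftrightarrow$(3).

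There is no serious obstacle here. The only points to state carefully are the standard equality $V(\mathbf{A}\times\mathbf{B}) = V(\{\mathbf{A},\mathbf{B}\})$, together with the recalled fact from \cite{Dolinka2000} that $\mathcal{RBISL}$ is generated by $\{\mathbf{IS}_2,\mathbf{IS}_3\}$, and the small observation that $Var = Var^+ \cup Var^-$.
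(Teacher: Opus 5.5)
Your proposal is correct and follows essentially the same route as the paper: (1)$\Leftrightarrow$(2) because $V(\mathbf{IS}_2 \times \mathbf{IS}_3) = V(\mathbf{IS}_2,\mathbf{IS}_3) = \mathcal{RBISL}$, and (1)$\Leftrightarrow$(3) by splitting validity in the product into validity in each factor and then applying the characterisations of $\mathbf{IS}_2$ (regular identities) and $\mathbf{IS}_3$ (bipolarly balanced identities). Your explicit check that ``regular and bipolarly balanced'' coincides with ``regular bipolarly balanced'', via $Var = Var^+ \cup Var^-$, is correct and spells out a step the paper's contrapositive argument leaves implicit.
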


\begin{proof}
Again, the equivalence of (1) and (2) is clear from the definitions and the above remarks.
If $\varphi \approx \psi$ is not regular bipolarly balanced, then it is either irregular (and then it fails in $\mathbf{IS_2}$ by Fact \ref{fact:characterization-IS2}) or not bipolarly balanced (and then it fails in $\mathbf{IS_3}$ by Proposition \ref{prop:characterization-IS3}). So (1) implies (3). If $\varphi \approx \psi$ has a counterexample in $\mathbf{IS_2} \times \mathbf{IS_3} $, then it has a counterexample either in $\mathbf{IS_2} $ or in $\mathbf{IS_3} $, and we use the same results to obtain the implication from (3) to (1).   
\end{proof}

If an algebra $\A$ admits a De Morgan-P\l onka representation over a certain involutive semilattice $\mathbf{I}$, then $\mathbf{I}$ will belong to one or more of the subvarieties of $\mathcal{ISL}$ we have just characterised. On the other hand, by Theorem \ref{cappelloni}, $\A$ is an a-involutive left-normal band with compatible operations such that $\mathbf{I}$ is obtained as the quotient (modulo $\mathcal{D}$) of its a-involutive band reduct. It will be useful, in the sequel, to have at our disposal necessary and sufficient conditions for the membership of this quotient in each of the subvarieties of $\mathcal{ISL}$.

\begin{lemma}
    Let $\mathbf{A}$ be an a-involutive left-normal band. The following conditions are equivalent:
    \begin{enumerate}
        \item $\A \vDash x \cdot y \approx x \cdot \lnot y $;
        \item $\A \vDash x \cdot \lnot x \approx x $;
        \item $\A / \mathcal{D} \in \mathcal{RISL}$.
    \end{enumerate}
\end{lemma}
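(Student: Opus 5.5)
The plan is to prove the cycle (1)$\Rightarrow$(2)$\Rightarrow$(3)$\Rightarrow$(1). Throughout we use that $\mathcal{D}$ is an a-involutive band congruence on $\A$ and that $\A/\mathcal{D}$ is an involutive semilattice. This was recalled after Theorem \ref{cappelloni}, and it rests on the Clifford--McLean theorem together with compatibility of $\mathcal{D}$ with $\lnot$ in the a-involutive setting. We also use that in a left-normal band ${\leq_L}={\leq_D}$, so $a\,\mathcal{D}\,b$ iff $ab=a$ and $ba=b$.

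\emph{(1) implies (2).} Substitute $y:=x$ in $x\cdot y\approx x\cdot\lnot y$. This gives $x\cdot\lnot x\approx x\cdot x\approx x$ by idempotency.

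\emph{(2) implies (3).} By Figure \ref{fig:subvarieties-ISL}, $\mathcal{RISL}$ is axiomatised relative to $\mathcal{ISL}$ by $x\approx\lnot x$. So it suffices to show $a\,\mathcal{D}\,\lnot a$ for every $a\in A$, that is, $a\cdot\lnot a=a$ and $\lnot a\cdot a=\lnot a$. The first equality is (2). The second is (2) applied to $\lnot a$, together with $\lnot\lnot a=a$. Hence every element of $\A/\mathcal{D}$ is fixed by $\lnot$, and $\A/\mathcal{D}\in\mathcal{RISL}$.

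\emph{(3) implies (1).} Suppose $\lnot a\,\mathcal{D}\,a$ for all $a$. Since $\mathcal{D}$ is a band congruence, $\lnot b\,\mathcal{D}\,b$ gives $ab\,\mathcal{D}\,a\lnot b$. We must upgrade this to equality. The key observation is that in a left-normal band, $\mathcal{D}$-related elements sharing a left factor coincide. Indeed, from $b\,\mathcal{D}\,c$ we have $b=bcb$ and $c=cbc$, and in a left-normal band $\leq_D=\leq_L$ means $bc=b$ and $cb=c$. Then $ab=a(bc)=abc$ and $ac=a(cb)=acb$, and left-normality gives $abc=acb$. Hence $ab=ac$. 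Taking $c=\lnot b$ yields $ab=a\lnot b$, which is (1).

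The main point needing care is this last step, namely translating $\mathcal{D}$ into the concrete equations $bc=b$, $cb=c$ via ${\leq_L}={\leq_D}$ and then applying the identity $xyz\approx xzy$. The a-involutive hypothesis is used only to guarantee that $\A/\mathcal{D}$ carries an involution, so that condition (3) is meaningful.
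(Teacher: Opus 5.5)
Your proof is correct. Steps (1)$\Rightarrow$(2) and (2)$\Rightarrow$(3) are the same as in the paper; the two routes differ only in (3)$\Rightarrow$(1).

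The paper argues by a direct computation. It reads (3) as $c=c\cdot\lnot c$ for every $c$ and uses the a-involution law $\lnot(x\cdot y)\approx\lnot x\cdot\lnot y$ to get $a\cdot\lnot b=(a\cdot\lnot b)\cdot\lnot(a\cdot\lnot b)=a\cdot\lnot b\cdot\lnot a\cdot b$. It then permutes the trailing factors by left-normality to reach $a\cdot b\cdot\lnot a\cdot\lnot b=(a\cdot b)\cdot\lnot(a\cdot b)=a\cdot b$.

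You instead prove a general fact about left-normal bands: if $b\,\mathcal{D}\,c$ then $a\cdot b=a\cdot c$ for all $a$, i.e.\ left translations are constant on $\mathcal{D}$-classes. You then instantiate it at $c=\lnot b$.

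Your version buys three things. It does not use the a-involution law in the key step at all; that law is needed only to know that $\A/\mathcal{D}$ is an involutive semilattice. It mirrors the P\l onka-sum picture, in which $a\cdot b$ depends only on the fibre of $b$. And in the form ``$x\cdot s\,\mathcal{D}\,x\cdot t$ implies $x\cdot s=x\cdot(x\cdot s)=x\cdot(x\cdot t)=x\cdot t$'' it also handles the nontrivial directions of Lemmas \ref{mezzitermini} and \ref{tiramolla} uniformly. The paper's computation, by contrast, is a short equational trick tailored to this particular identity.
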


\begin{proof}
    (1) implies (2). If (1) holds, then for any $a \in A$ we have that $a = aa = a \cdot \lnot a$.

    (2) implies (3). If (2) holds, for any $a$ we have that $a \cdot \lnot a = a$ and $\lnot a \cdot a = \lnot a \cdot \lnot \lnot a = \lnot a$. Hence $[a]_\mathcal{D} = [\lnot a]_\mathcal{D}$, i.e., $\A / \mathcal{D} \in \mathcal{RISL}$.

    (3) implies (1). Let $a,b \in A$. By (3) and the a-involution property, $a \cdot \lnot b = (a \cdot \lnot b) \cdot \lnot (a \cdot \lnot b) = a \cdot \lnot b \cdot \lnot a \cdot b$. But then, using left-normality and (3) again, $a \cdot \lnot b = a \cdot \lnot b \cdot \lnot a \cdot b = a \cdot b \cdot \lnot a \cdot \lnot b = (a \cdot b) \cdot \lnot (a \cdot b) = a \cdot b$.
\end{proof}

\begin{lemma}\label{mezzitermini}
    Let $\mathbf{A}$ be an a-involutive left-normal band. The following conditions are equivalent:
    \begin{enumerate}
        \item $\A \vDash x \cdot \lnot x \approx x \cdot \lnot x \cdot y \cdot \lnot y $;
        \item $\A \vDash x \cdot \lnot x \approx x \cdot \lnot x \cdot y  $;
        \item $\A / \mathcal{D} \in \mathcal{BISL}$.
    \end{enumerate}
\end{lemma}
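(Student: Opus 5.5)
We prove the cycle (1) $\Rightarrow$ (2) $\Rightarrow$ (3) $\Rightarrow$ (1), in the same style as the previous lemma. Throughout we use left-normality ($xyz \approx xzy$), idempotency, $\lnot\lnot x \approx x$ and the a-involution law $\lnot(x \cdot y) \approx \lnot x \cdot \lnot y$. We also use the fact that in a left-normal band ${\leq_L} = {\leq_D}$, so $a \,\mathcal{D}\, b$ iff $ab = a$ and $ba = b$. Recall that $\mathcal{BISL}$ is axiomatised relative to $\mathcal{ISL}$ by $x \lor \lnot x \approx (x \lor \lnot x) \lor y$, and that the join in $\A/\mathcal{D}$ is induced by $\cdot$.

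\emph{(1) implies (2).} Fix $a,b$. By left-normality and idempotency, $a\lnot a b = (a \lnot a)(a \lnot a) b = a\lnot a \, b\, a \lnot a$. We would like to insert $\lnot b$ by applying (1) to the pair $x := a$, $y := b$ in some form. The cleanest route is the following computation:
\[
a\lnot a\, b = a\lnot a\, b\, \lnot b \cdot b = (a \lnot a\, b \lnot b)\, b = a\lnot a\, b
\]
This is trivial, so instead we exploit (1) with $y := a \lnot a$:
\[
a \lnot a = a\lnot a\, (a\lnot a)\lnot(a\lnot a)
\]
This also gives nothing new. A better choice is $y := \lnot a\, b$ in (1). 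Then
\[
a\lnot a = a\lnot a\,(\lnot a b)\lnot(\lnot a b) = a\lnot a\,\lnot a\, b\, a\, \lnot b = a\lnot a\, b \lnot b .
\]
This is not yet (2). So I would instead derive (2) via (3): first show (1) $\Rightarrow$ (3), then (3) $\Rightarrow$ (2).

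\emph{(1) implies (3).} In $\A/\mathcal{D}$, (1) says $[a] \lor \lnot[a] = [a] \lor \lnot[a] \lor [b] \lor \lnot[b]$. Putting $b := a \cdot c$ gives $[a]\lor\lnot[a]\lor[c]\lor\lnot[c] = [a]\lor\lnot[a]$. It remains to get rid of $\lnot[c]$. In an involutive semilattice, consider the bipolar test element of $\mathbf{IS}_4$: there the identity in (1) fails with $x = \mathbf{i}$, $y = \mathbf{j}$. Since $\mathcal{ISL}$ has only the five subvarieties listed above, (1) forces $\A/\mathcal{D}$ into $\mathcal{BISL}$, $\mathcal{RISL}$ or $\mathcal{T}$. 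Moreover (1) holds in $\mathbf{IS}_2$ only if $\mathbf{IS}_2$ satisfies $x \approx x y$, which is false. Hence $\A/\mathcal{D} \in \mathcal{BISL}$ (the trivial variety is contained in it). This uses Fact~\ref{fact:characterization-IS2} and Proposition~\ref{prop:characterization-IS3} applied to the identity $x\lor\lnot x \approx x\lor\lnot x\lor y\lor\lnot y$, which is bipolar and hence valid in $\mathbf{IS}_3$ but irregular. Since $\A/\mathcal{D}$ satisfies it, $\A/\mathcal{D}$ lies in a subvariety of $\mathcal{ISL}$ omitting $\mathbf{IS}_2$ and $\mathbf{IS}_4$, i.e.\ in $\mathcal{BISL}$.

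\emph{(3) implies (2).} By (3), $a\lnot a \,\mathcal{D}\, a \lnot a b$, so in particular $(a\lnot a b)(a \lnot a) = a\lnot a b$ and $(a\lnot a)(a\lnot a b) = a\lnot a$ (using ${\leq_L}={\leq_D}$). The second equation simplifies by idempotency to $a \lnot a b = a\lnot a$, which is (2).

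\emph{(2) implies (1).} Apply (2) twice: $a\lnot a = a\lnot a\, b$ and then $a \lnot a\, b = (a\lnot a)\lnot b$ with $y := \lnot b$. This gives
\[
a\lnot a\, b \lnot b = (a\lnot a b)\lnot b = a\lnot a\,\lnot b = a\lnot a.
\]

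The main obstacle is (1) $\Rightarrow$ (3). I route it through the classification of subvarieties of $\mathcal{ISL}$ together with Fact~\ref{fact:characterization-IS2} and Proposition~\ref{prop:characterization-IS3}, which avoids a direct band computation. One point still needs checking there: $\A/\mathcal{D}$ satisfies the $\mathcal{D}$-image of (1), and this image is exactly the identity $x\lor\lnot x \approx x\lor\lnot x\lor y\lor\lnot y$. This holds because $\mathcal{D}$ is a congruence for $\cdot$ and $\lnot$.
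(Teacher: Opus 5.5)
Your cycle (1) $\Rightarrow$ (3) $\Rightarrow$ (2) $\Rightarrow$ (1) is correct, but it reaches (3) by a heavier route than the paper. The paper goes (1) $\Rightarrow$ (2) $\Rightarrow$ (3) $\Rightarrow$ (1).

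\textbf{The paper's route.} Its (1) $\Rightarrow$ (2) is exactly the chain you wrote first and then abandoned: $a\lnot a\,b = (a\lnot a\,b\lnot b)\,b = a\lnot a\,b\lnot b = a\lnot a$. The middle step is left-normality plus idempotency: move the trailing $b$ next to the earlier $b$ and absorb it. You went in a circle only because you rewrote $a\lnot a\,b\lnot b$ back to $a\lnot a$ before absorbing that last $b$. Once (2) is available, (2) $\Rightarrow$ (3) is immediate. The image of (2) in $\A/\mathcal{D}$ is literally the relative axiom $x\lor\lnot x\approx(x\lor\lnot x)\lor y$ of $\mathcal{BISL}$. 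The paper's (3) $\Rightarrow$ (1) is the same $\mathcal{D}$-class argument as your (3) $\Rightarrow$ (2).

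\textbf{Your route.} Your (1) $\Rightarrow$ (3) argues semantically instead. The image of (1) is irregular, so it fails in $\mathbf{IS}_2$ by Fact~\ref{fact:characterization-IS2}. Hence $V(\A/\mathcal{D})$ is a subvariety of $\mathcal{ISL}$ not containing $\mathbf{IS}_2$. The classification in Figure~\ref{fig:subvarieties-ISL} then leaves only $\mathcal{T}$ and $\mathcal{BISL}$. This is sound and not circular. However, it imports the full description of the subvariety lattice of $\mathcal{ISL}$ where a one-line band computation suffices.

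\textbf{Two small remarks on the write-up.}
\begin{enumerate}
\item Failure in $\mathbf{IS}_4$ alone excludes only $\mathcal{ISL}$, not $\mathcal{RBISL}$. So your intermediate list ``$\mathcal{BISL}$, $\mathcal{RISL}$ or $\mathcal{T}$'' is premature. It is the failure in $\mathbf{IS}_2$ that does all the work, and that suffices on its own, since $\mathbf{IS}_2$ is a subalgebra of $\mathbf{IS}_4$.
\item The validity of the identity in $\mathbf{IS}_3$ plays no role in the argument.
\end{enumerate}
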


\begin{proof}
    (1) implies (2). If (1) holds, then given $a,b \in A$, we have that $a \cdot \lnot a \cdot b= a \cdot \lnot a \cdot b \cdot \lnot b \cdot b = a \cdot \lnot a \cdot b \cdot \lnot b = a \cdot \lnot a$.

    (2) implies (3) because identities are preserved by quotients.

    (3) implies (1). If (3) holds, then for any $a,b$ we have that $[a \cdot \lnot a]_\mathcal{D} = [a]_\mathcal{D} \lor \lnot [a]_\mathcal{D} = [a]_\mathcal{D} \lor \lnot [a]_\mathcal{D} \lor [b]_\mathcal{D} \lor \lnot [b]_\mathcal{D}  = [a \cdot \lnot a \cdot b \cdot \lnot b]_\mathcal{D}$. In particular, $a \cdot \lnot a \cdot b\cdot \lnot b = a \cdot \lnot a \cdot a \cdot \lnot a \cdot b \cdot \lnot b = a \cdot \lnot a$.
\end{proof}

\begin{lemma}\label{tiramolla}
    Let $\mathbf{A}$ be an a-involutive left-normal band. The following conditions are equivalent:
    \begin{enumerate}
        \item $\A \vDash x \cdot \lnot x \cdot y \approx x \cdot \lnot x \cdot \lnot y $;
        \item $\A / \mathcal{D} \in \mathcal{RBISL}$.
    \end{enumerate}
\end{lemma}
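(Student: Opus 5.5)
Both directions can be handled in the style of the two preceding lemmas: (1) implies (2) by passing to quotients, and (2) implies (1) by lifting an identity from $\A/\mathcal{D}$ back to $\A$ through the left-normal band structure. Throughout I use that in a left-normal band $\mathcal{D}$ coincides with $\leq_L$-equivalence, so $a \,\mathcal{D}\, b$ means $ab = a$ and $ba = b$. I also use that $\mathcal{D}$ is an a-involutive band congruence, as recalled after Theorem \ref{cappelloni}, so that $\A/\mathcal{D}$ is an involutive semilattice with $[\lnot a]_\mathcal{D} = \lnot [a]_\mathcal{D}$ and $[a\cdot b]_\mathcal{D} = [a]_\mathcal{D} \lor [b]_\mathcal{D}$.

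\underline{(1) implies (2).} Identities pass to quotients, so $\A/\mathcal{D}$ satisfies $x \cdot \lnot x \cdot y \approx x \cdot \lnot x \cdot \lnot y$. Since the band reduct of $\A/\mathcal{D}$ is a semilattice with product $\lor$, this is exactly $(x \lor \lnot x) \lor y \approx (x \lor \lnot x)\lor \lnot y$, the axiom of $\mathcal{RBISL}$ relative to $\mathcal{ISL}$ from Figure \ref{fig:subvarieties-ISL}. Hence $\A/\mathcal{D} \in \mathcal{RBISL}$.

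\underline{(2) implies (1).} Fix $a,b\in A$ and set $u = a\cdot\lnot a\cdot b$ and $v = a \cdot \lnot a \cdot \lnot b$. By (2) and the axiom of $\mathcal{RBISL}$, $[u]_\mathcal{D} = [v]_\mathcal{D}$, so $u \,\mathcal{D}\, v$ and therefore $u = u v$. Now compute $uv = a\lnot a b a \lnot a \lnot b$. Left-normality lets us permute every factor after the first $a$, and idempotency lets us delete repeated letters, so $uv = a \lnot a b \lnot b$. By the symmetric argument $v = vu = a\lnot a \lnot b b = a\lnot a b\lnot b$ (again permuting after the leading $a$). Hence $u = v$, which is (1).

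This last normalisation is the only step needing care. It relies on left-normality allowing arbitrary permutation and deduplication of letters after the first, which is standard: from $xyz\approx xzy$, every word beginning with $x$ equals $x$ times the set of its remaining letters. The delicate point is making sure the leading letter $a$ is the same in both $uv$ and $vu$, so that the normal forms coincide; since both words start with $a$, the argument goes through. A-involutivity is needed only to guarantee that $\mathcal{D}$ respects $\lnot$, which we are assuming from the result recalled after Theorem \ref{cappelloni}.
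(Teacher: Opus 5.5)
Your proof is correct and follows essentially the same route as the paper. The paper also treats (1)$\Rightarrow$(2) as immediate from passing to the quotient. For the converse it simply observes that both $a\cdot\lnot a\cdot b$ and $a\cdot\lnot a\cdot\lnot b$ equal $a\cdot\lnot a\cdot b\cdot\lnot b$, which is exactly what your computations $u=uv$ and $v=vu$, followed by left-normal rearrangement, make explicit.
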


\begin{proof}
    For the nontrivial implication, observe that if (3) holds, then for any $a,b \in A$ both $a \cdot \lnot a \cdot b$ and $a \cdot \lnot a \cdot \lnot b$ are equal to $a \cdot \lnot a \cdot b \cdot \lnot b$.
\end{proof}

In what follows, given a strongly irregular variety $\mathcal{V}$ of dualised type $\mathcal{L}$, its \textit{bipolarly balanced regularisation} $Bip(\mathcal{V})$ is the variety that satisfies all and only the bipolarly balanced identities valid in $\mathcal{V}$, and its \textit{regular bipolarly balanced regularisation} $R(Bip(\mathcal{V}))$ is the variety that satisfies all and only the regular bipolarly balanced identities valid in $\mathcal{V}$. 
It follows from Facts \ref{fact:characterization-IS2} and \ref{fact:characterization-IS4}, as well as from Proposition \ref{prop:characterization-IS3} and Corollary \ref{stritto}, that if $\mathcal{T}$ is the trivial variety of type $\langle 2, 1 \rangle$, $R(\mathcal{T}) = \mathcal{RISL}, Bip(\mathcal{T}) = \mathcal{BISL}, R(Bip(\mathcal{T})) = \mathcal{RBISL}$, and $B(\mathcal{T}) = \mathcal{ISL}$.

\section{De Morgan bisemilattices}\label{morgana}

Recall that a \emph{distributive bisemilattice} is a double band $\mathbf{B} = \langle B, \land, \lor \rangle$ such that the reducts $\langle B, \land \rangle$ and $\langle B, \lor \rangle$ are semilattices, and $\land$ ($\lor$, respectively) distributes over $\lor$ ($\land$, respectively). Thus, distributive bisemilattices are a variety, noted $\mathcal{DBL}$. In his classic paper \cite{Plo}, P\l onka showed that $\mathcal{DBL} = R(\mathcal{DL})$, where $\mathcal{DL}$ is the variety of distributive lattices.

In the present paper, we are interested in the variety $\mathcal{DMBL} $ of \emph{De Morgan bisemilattices}. A De Morgan bisemilattice is an algebra $\mathbf{B} = \langle B, \land, \lor, \lnot \rangle$ of type $\langle 2, 2, 1 \rangle$ such that $\langle B, \land, \lor \rangle$ is a distributive bisemilattice and the identities $\lnot \lnot x \approx x$, $\lnot (x \land y) \approx \lnot x \lor \lnot y$, and $\lnot (x \lor y) \approx \lnot x \land \lnot y$ are satisfied. De Morgan bisemilattices are called \emph{Angellic algebras} in \cite{RandriamahazakaSL} and studied under a different name also in \cite{Hornischer}. They were extensively investigated in \cite{PaoliSzmucZirattu}. As recalled in the introduction, they are the balanced regularisation of the variety of De Morgan lattices, and hence, the class of all algebras that are representable as De Morgan-P\l onka sums of distributive lattices.

De Morgan lattices are the De Morgan bisemilattices that satisfy $x \approx x \land (x \lor y)$. With a slight abuse, involutive semilattices can be identified with De Morgan bisemilattices that satisfy $x \land y \approx x \lor y$. When referring to an involutive semilattice, the context will suffice to make it clear whether we view it as an algebra of type $\langle 2, 1 \rangle$ or of type $\langle 2, 2, 1 \rangle$. We will keep the same symbols to denote an involutive semilattice in the $\langle 2, 2, 1 \rangle$ type and its $\langle 2, 1 \rangle$-reduct, relying again on the context to disambiguate.

Occasionally, if $F$ is an involutive semilattice direct system of distributive lattices over $\mathbf{IS}_2$ such that $F(\mathbf{j})$ is a trivial algebra and $\A = F(\mathbf{i})^\lnot$, we denote the De Morgan bisemilattice $\DPL(F)$ by $\A^\dagger$.

We are especially interested in investigating, for each subvariety $\mathcal{V}$ of the variety $\mathcal{DML}$ of De Morgan lattices, the varieties $R(\mathcal{V}), Bip(\mathcal{V}), R(Bip(\mathcal{V}))$ and $B(\mathcal{V})$. Clearly, all these varieties are subvarieties of $\mathcal{DMBL}$ (see Figure~\ref{fig:lattice-regularisations}, where the varieties of involutive semilattices $R(\mathcal{T}), Bip(\mathcal{T}), R(Bip(\mathcal{T}))$ and $B(\mathcal{T})$ are not depicted).

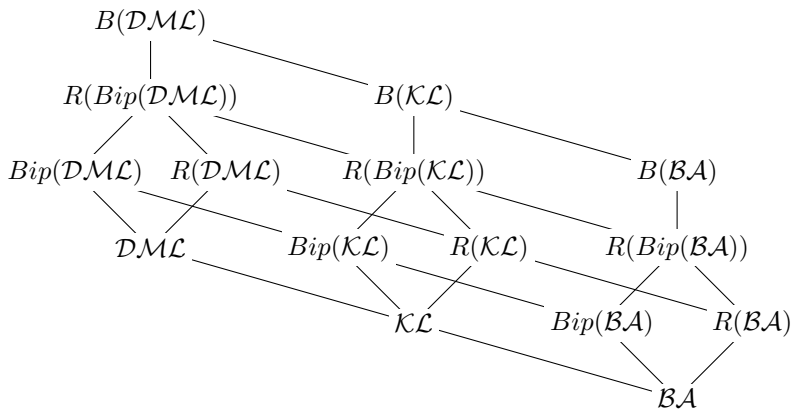
\begin{figure}[ht]
    \centering
    \begin{tikzpicture}[scale=1] 

        \coordinate (V01) at (0,0);
        \coordinate (V02) at (1, 1);
        \coordinate (V03) at (0, 2);
        \coordinate (V0T) at (0, 3);        
        \coordinate (V04) at (-1, 1);

        \coordinate (V11) at (3.5, -1);
        \coordinate (V12) at (4.5, 0);
        \coordinate (V13) at (3.5, 1);
        \coordinate (V1T) at (3.5, 2);        
        \coordinate (V14) at (2.5, 0);

        \coordinate (V21) at (7, -2);
        \coordinate (V22) at (8, -1);
        \coordinate (V23) at (7, 0);
        \coordinate (V2T) at (7, 1);        
        \coordinate (V24) at (6, -1);

        \coordinate (V31) at (10.5, -3);
        \coordinate (V32) at (11.5, -2);
        \coordinate (V33) at (10.5, -1);
        \coordinate (V34) at (9.5, -2);

        \draw (V01) -- 
        (V11);
        \draw (V11) -- 
        (V21);

        \draw (V02) -- 
        (V12);
        \draw (V12) -- 
        (V22);

        \draw (V03) -- 
        (V13);
        \draw (V13) -- 
        (V23);

        \draw (V04) -- 
        (V14);
        \draw (V14) -- 
        (V24);

        \draw (V0T) -- 
        (V1T);
        \draw (V1T) -- 
        (V2T);

        \draw (V01) -- (V02) -- (V03) -- (V04) -- cycle;
        \draw (V03) -- (V0T);
        \node[fill=white, inner sep=2pt] at (V01) {$\mathcal{DML}$};
        \node[fill=white, inner sep=2pt] at (V02) {$R(\mathcal{DML})$};
        \node[fill=white, inner sep=2pt] at (V03) {$R(Bip(\mathcal{DML}))$};
        \node[fill=white, inner sep=2pt] at (V04) {$Bip(\mathcal{DML})$};
        \node[fill=white, inner sep=2pt] at (V0T) {$B(\mathcal{DML})$};        

        \draw (V11) -- (V12) -- (V13) -- (V14) -- cycle;
        \draw (V13) -- (V1T);        
        \node[fill=white, inner sep=2pt] at (V11) {$\mathcal{KL}$};
        \node[fill=white, inner sep=2pt] at (V12) {$R(\mathcal{KL})$};
        \node[fill=white, inner sep=2pt] at (V13) {$R(Bip(\mathcal{KL}))$};
        \node[fill=white, inner sep=2pt] at (V14) {$Bip(\mathcal{KL})$};
        \node[fill=white, inner sep=2pt] at (V1T) {$B(\mathcal{KL})$};      

        \draw (V21) -- (V22) -- (V23) -- (V24) -- cycle;
        \draw (V23) -- (V2T);        
        \node[fill=white, inner sep=2pt] at (V21) {$\mathcal{BA}$};
        \node[fill=white, inner sep=2pt] at (V22) {$R(\mathcal{BA})$};
        \node[fill=white, inner sep=2pt] at (V23) {$R(Bip(\mathcal{BA}))$};
        \node[fill=white, inner sep=2pt] at (V24) {$Bip(\mathcal{BA})$};
        \node[fill=white, inner sep=2pt] at (V2T) {$B(\mathcal{BA})$};


    \end{tikzpicture}
    \caption{Some subvarieties of the variety $\mathcal{DMBL}$}
    \label{fig:lattice-regularisations}
\end{figure}

At least five questions naturally arise:
\begin{itemize}
    \item Can we find suitable generators for each of these varieties?
    \item Can we provide axiomatisations thereof?
    \item Can we characterise the De Morgan-P\l onka representations of their members in terms of their fibres and their underlying involutive semilattices?
    \item Are these varieties pairwise distinct?
    \item Together with the subvarieties of $\mathcal{DML}$, do they exhaust the lattice of subvarieties of $\mathcal{DMBL}$?
\end{itemize}
We will address all these questions and answer many of them.

\subsection{Types of regularisations: Characterisation results}

We start by characterising in several different manners the variety $\mathcal{DMBL} = B(\mathcal{DML})$ itself. The next theorem is, for the most part, a collection of results already available in the literature. The algebra $\mathbf{U}$, mentioned therein, is isomorphic to the De-Morgan P\l onka sum $\DPL(F)$ of distributive lattices over $\mathbf{IS}_4$ such that, if $\mathbf{D}_2$ and $\mathbf{D}_1$ are the $2$-element and the $1$-element distributive lattice, respectively, we have that $F(\mathbf{k}) = \mathbf{D}_1$, $F(\mathbf{j}) = F(\lnot \mathbf{j}) = \mathbf{D}_2$, and $F(\mathbf{i}) = \mathbf{D}_2 \times \mathbf{D}_2$ (see \cite{ferguson2015b}, \cite{PaoliSzmucZirattu}).

\begin{theorem}\label{accozzato}
    The following varieties are all coincident:
    \begin{enumerate}
        \item $\mathcal{DMBL}$;
        \item $\mathcal{DBL}^\ast$;
        \item $B(\mathcal{DML})$;
        \item $B(\mathcal{DL}^\ast)$;
        \item $(R(\mathcal{DL}))^\ast$;
        \item $V(\mathbf{DM}_4,\mathbf{IS}_4)$;
        \item $V(\mathbf{U})$;
        \item the class of De Morgan-P\l onka sums of distributive lattices.
    \end{enumerate}
\end{theorem}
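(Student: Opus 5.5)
We will establish the coincidences by a chain of equalities, taking (1) as the anchor.

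First, (1)$=$(2) holds by definition. $\mathcal{DBL}$ is the variety of distributive bisemilattices. Its type is dualised with $d(\land)=\lor$ and $d(\lor)=\land$. Closing its equational theory under $\lnot\lnot x\approx x$ and the two De Morgan identities yields exactly the axioms of a De Morgan bisemilattice. Likewise $\mathcal{DL}^\ast=\mathcal{DML}$, so (3)$=$(4) is immediate. By P\l onka's theorem $\mathcal{DBL}=R(\mathcal{DL})$, so (5) is literally (2).

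Second, we invoke Theorem \ref{cappelloni}. The variety $\mathcal{DL}$ is strongly irregular, since it satisfies $x\approx x\land(x\lor y)$, and it is symmetric, being self-dual. Hence $B(\mathcal{DL}^\ast)$ is exactly the class of De Morgan-P\l onka sums of distributive lattices, giving (4)$=$(8).

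To connect (1) with (3), we must show $\mathcal{DMBL}=B(\mathcal{DML})$. For $\mathcal{DMBL}\subseteq B(\mathcal{DML})$, note that every defining identity of $\mathcal{DMBL}$ is balanced regular and holds in $\mathcal{DML}$. For the reverse inclusion, we show that every member of $\mathcal{DMBL}$ is a De Morgan-P\l onka sum of distributive lattices and then use (8)$=$(4). By Theorem \ref{cappelloni}, it suffices to equip any $\A\in\mathcal{DMBL}$ with a left-normal band operation compatible with $\land,\lor$ such that $\langle A,\cdot,\lnot\rangle$ is a-involutive. The natural choice is $a\cdot b:=a\land(a\lor b)$, which, unfolding the a-involution condition, must satisfy $a\cdot b=\lnot(\lnot a\cdot\lnot b)$.

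We then verify the required properties.
\begin{enumerate}
\item Idempotence and left-normality of $\cdot$ follow by distributivity, exactly as in P\l onka's treatment of distributive bisemilattices.
\item Compatibility with $\land$ and $\lor$ is handled the same way.
\item For a-involutivity, De Morgan gives $\lnot(\lnot a\land(\lnot a\lor\lnot b))=a\lor(a\land b)$. By distributivity, $a\lor(a\land b)=(a\lor a)\land(a\lor b)=a\land(a\lor b)$.
\end{enumerate}
This last computation settles the inclusion. We expect this step to be the main obstacle if done from scratch, since compatibility must be checked carefully; however, it is essentially Randriamahazaka's result, which the paper recalls and which we may cite.

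Finally, for (6) and (7), we show that each generated variety equals $B(\mathcal{DML})$. For $V(\mathbf{DM}_4,\mathbf{IS}_4)$, both generators lie in $\mathcal{DMBL}$, giving one inclusion. For the other, suppose an identity holds in both generators. Holding in $\mathbf{IS}_4$, viewed in the $\langle 2,2,1\rangle$ type with $\land=\lor$, forces the identity to be balanced regular, by Fact \ref{fact:characterization-IS4} applied after identifying $\land$ with $\lor$. Holding in $\mathbf{DM}_4$ forces it to hold in $\mathcal{DML}$, by Theorem \ref{periscopio}. So it is valid in $B(\mathcal{DML})$.

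For $\mathbf{U}$, we exhibit $\mathbf{U}$ as $\DPL(F)$ over $\mathbf{IS}_4$, which lies in $\mathcal{DMBL}$ by (8). Since $\mathbf{IS}_4$ is its underlying involutive semilattice, Theorem \ref{th:DM-Plonka sums and balanced equations} shows that every identity of $\mathbf{U}$ is balanced regular. Moreover, the fibre $F(\mathbf{i})=\mathbf{D}_2\times\mathbf{D}_2$ together with the dualising map $n_{\mathbf{i}\mathbf{i}}$ yields $F(\mathbf{i})^\lnot$, a subalgebra of $\mathbf{U}$ lying in $\mathcal{DML}$. We need $F(\mathbf{i})^\lnot\cong\mathbf{DM}_4$, which requires the dualising map on $\mathbf{D}_2\times\mathbf{D}_2$ to be the coordinate swap composed with complement. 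Thus every identity of $\mathbf{U}$ holds in $\mathbf{DM}_4$ and is balanced regular, so $V(\mathbf{U})=B(\mathcal{DML})$.
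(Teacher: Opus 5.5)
Your proof is correct in outline and follows the paper's route closely. It uses the same ingredients: the definitional identifications, P\l onka's $\mathcal{DBL}=R(\mathcal{DL})$, Theorem~\ref{cappelloni} for (4)$=$(8), the operation $x\cdot y:=x\land(x\lor y)$ to place every De Morgan bisemilattice in (8), and Theorem~\ref{periscopio} with Fact~\ref{fact:characterization-IS4} for (6). Two points need repair.

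First, your two inclusions between (1) and (3) are labelled the wrong way round. Noting that the axioms of $\mathcal{DMBL}$ are balanced regular identities valid in $\mathcal{DML}$ shows that every member of $B(\mathcal{DML})$ satisfies them. That is, it proves $B(\mathcal{DML})\subseteq\mathcal{DMBL}$, the syntactic counterpart of the paper's (6)$\subseteq$(1). It is the representation argument that yields $\mathcal{DMBL}\subseteq B(\mathcal{DML})$. Since you prove both, the conclusion stands.

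Second, Theorem~\ref{cappelloni} applied to $\langle A,\cdot,\lnot\rangle$ only produces a De Morgan-P\l onka sum of \emph{some} $\{\land,\lor\}$-algebras. So equipping $\A$ with the band operation does not by itself ``suffice'' to invoke (8)$=$(4). You must add, as the paper does, why the fibres are distributive lattices:
\begin{itemize}
\item the fibres are the $\mathcal{D}$-classes, which are $\{\land,\lor\}$-subalgebras of $\A$ and hence distributive bisemilattices;
\item a left-normal band is left-zero on each $\mathcal{D}$-class, since $a\,\mathcal{D}\,b$ iff $ab=a$ and $ba=b$;
\item so each fibre satisfies $x\land(x\lor y)\approx x$, and by your own distributivity computation also $x\lor(x\land y)\approx x$.
\end{itemize}
Only then is each fibre a distributive lattice.

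For (7) your argument differs slightly from the paper's. The paper exhibits $\mathbf{DM}_4\in S(\mathbf{U})$, $\mathbf{IS}_4\in H(\mathbf{U})$ and $\mathbf{U}\in SH(\mathbf{DM}_4\times\mathbf{IS}_4)$. You instead get $\mathbf{U}\in\mathcal{DMBL}$ for free from (8), and you replace the quotient map onto $\mathbf{IS}_4$ by the non-preservation half of Theorem~\ref{th:DM-Plonka sums and balanced equations}. This spares you constructing the quotient of $\mathbf{DM}_4\times\mathbf{IS}_4$, at the cost of relying on the stronger half of Randriamahazaka's theorem; both routes work.

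Your observation that $n_{\mathbf{i}\mathbf{i}}$ must be complement composed with the coordinate swap is correct. Plain complement would make $F(\mathbf{i})^\lnot$ the four-element Boolean algebra. This is exactly what the paper tacitly assumes when it says $\mathbf{DM}_4$ is a subalgebra of $\mathbf{U}$.
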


\begin{proof}
The coincidence of (1) and (2), as well as the coincidence of (3) and (4), holds by definition. 

(2) coincides with (5) by the remark at the beginning of this section. 

(4) coincides with (8) by Theorem \ref{cappelloni}. 

(3) coincides with (6) by Theorem \ref{periscopio}.(2) and Fact \ref{fact:characterization-IS4}. 

(6) is included in (1), because both $\mathbf{DM}_4$ and $\mathbf{IS}_4$ are De Morgan bisemilattices. 

We show that (1) is included in (8). As shown in \cite{RandriamahazakaSL}, any De Morgan bisemilattice $\B$ is an a-involutive left-normal band with compatible operations, as witnessed by the operation $x \cdot y := x \land (x \lor y)$. By Theorem \ref{cappelloni}, then, $\B$ is representable as a De Morgan-P\l onka sum over $\langle B, \cdot, \lnot \rangle / \mathcal{D}$. However, each fibre of this representation satisfies, by construction, the identity $x \cdot y \approx x$, hence it is a distributive lattice.

To conclude our proof, it suffices to show that (6) is equivalent to (7). However, $\mathbf{DM}_4$ and $\mathbf{IS}_4$ are, respectively, a subalgebra and a quotient of $\mathbf{U}$. In turn, $\mathbf{U}$ is a subalgebra of the quotient of $\mathbf{DM}_4 \times \mathbf{IS}_4$ obtained by collapsing all and only the elements whose second coordinate is $\bf k$.
\end{proof}

Recall from Section \ref{inquina} that a \emph{regularised absorptive} De Morgan bisemilattice  \cite{PaoliSzmucZirattu} is a member of $\mathcal{DMBL}$ that satisfies the identity
\[
\text{R-Abs }x \land (x \lor y) \approx x \land (x \lor \lnot y).
\]
The variety of regularised absorptive De Morgan bisemilattices is noted $\mathcal{RDMBL}$. According to the next theorem, itself a collection of known results for the most part, $\mathcal{RDMBL} = R(\mathcal{DML})$. 

\begin{theorem}\label{servosterzo}
The following varieties are all coincident:
    \begin{enumerate}
        \item $\mathcal{RDMBL}$;
        \item $R(\mathcal{DML})$;
        \item $V(\mathbf{DM}_4,\mathbf{IS}_2)$
        \item $V(\mathbf{{DM}_4^\dagger})$;
        \item the class of De Morgan-P\l onka sums of distributive lattices over members of $V(\mathbf{IS}_2)$.
    \end{enumerate}
\end{theorem}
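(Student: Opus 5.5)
We prove the coincidences by a cycle of inclusions, modelled on the proof of Theorem \ref{accozzato}.

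\emph{(2) and (3).} These coincide by Theorem \ref{periscopio} together with Fact \ref{fact:characterization-IS2}, arguing as for (3)$=$(6) in Theorem \ref{accozzato}. Since $\mathbf{DM}_4$ generates $\mathcal{DML}$, an identity holds in $V(\mathbf{DM}_4,\mathbf{IS}_2)$ iff it holds in $\mathcal{DML}$ and, by Fact \ref{fact:characterization-IS2}, is regular. Here we note that $\mathbf{IS}_2$, viewed in type $\langle 2,2,1\rangle$ with $\land=\lor$, satisfies exactly the regular identities, because it is term-equivalent to the $2$-element semilattice.

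\emph{(3) and (4).} $\mathbf{DM}_4$ is the subalgebra $F(\mathbf{i})^\lnot$ of $\mathbf{DM}_4^\dagger$, and $\mathbf{IS}_2$ is the quotient collapsing the fibre over $\mathbf{i}$. Hence $V(\mathbf{DM}_4^\dagger)\supseteq V(\mathbf{DM}_4,\mathbf{IS}_2)$. Conversely, $\mathbf{DM}_4^\dagger$ embeds into the quotient of $\mathbf{DM}_4\times\mathbf{IS}_2$ that collapses all pairs with second coordinate $\mathbf{j}$. This quotient is a congruence because $\mathbf{j}$ is absorbing for $\land,\lor$ and fixed by $\lnot$ in $\mathbf{IS}_2$.

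\emph{(2) is included in (1).} R-Abs is regular and holds in $\mathcal{DML}$, since both sides equal $x$ by absorption. We also need $R(\mathcal{DML})\subseteq\mathcal{DMBL}$: the defining identities of $\mathcal{DMBL}$ are balanced, hence regular, and hold in $\mathcal{DML}$.

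\emph{(1) is included in (5).} Let $\B\in\mathcal{RDMBL}$. By the proof of Theorem \ref{accozzato}, $\B$ is a De Morgan-P\l onka sum of distributive lattices over $\langle B,\cdot,\lnot\rangle/\mathcal{D}$, where $x\cdot y:=x\land(x\lor y)$. Now R-Abs states precisely that $x\cdot y\approx x\cdot\lnot y$. By the first unlabelled lemma of Section \ref{invosemi} (the $\mathcal{RISL}$ characterisation), the quotient lies in $\mathcal{RISL}=V(\mathbf{IS}_2)$.

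\emph{(5) is included in (2).} Take $\A=\DPL(F)$ with $\mathbf{I}\in\mathcal{RISL}$, so $\lnot i=i$ for all $i$. Each fibre with its dualising map $n_{ii}$ becomes $F(i)^\lnot\in\mathcal{DML}$. We want to show $\A$ satisfies every regular identity valid in $\mathcal{DML}$. Since the negation never changes fibres, the evaluation of a term $\varphi$ lands in the fibre indexed by the join of the indices of its variables, irrespective of polarity. Moreover, by induction on $\varphi$, its value is obtained by pushing all arguments into that fibre via the transition maps and evaluating in the De Morgan lattice $F(k)^\lnot$. For this we need that the transition maps commute with $\lnot$, which is the equivariance condition $n_{j,j}\circ p_{ij}=p_{ij}\circ n_{i,i}$.

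Hence $\A$ is a P\l onka sum, in the sense of Theorem \ref{affarone}/\ref{cappelletti}, of the De Morgan lattices $F(i)^\lnot$ over the semilattice $\langle I,\lor\rangle$, treating $\lnot$ as an ordinary fibre-preserving operation. Since $\mathcal{DML}$ is strongly irregular (it satisfies $x\approx x\land(x\lor y)$), Theorem \ref{cappelletti} gives $\A\in R(\mathcal{DML})$. A direct argument also works: for a regular identity, both sides land in the same fibre $F(k)$, where the identity holds.

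\emph{Expected obstacle.} The main delicate point is this last step, namely verifying carefully that a De Morgan-P\l onka sum over a system with $\lnot i=i$ really is an ordinary P\l onka sum of $\mathcal{L}^\ast$-algebras. This amounts to checking that the dualising maps $n_{ii}$ are involutive anti-isomorphisms compatible with the transition maps, so that the expanded fibres form a genuine semilattice direct system in $\mathcal{DML}$. The remaining steps are routine bookkeeping.
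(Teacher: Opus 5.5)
Your proof is correct, but it takes a different route from the paper's. The paper's proof mostly cites the literature:
\begin{itemize}
\item it imports $(1)=(2)$ from Hornischer and $(1)=(2)=(4)$ from Paoli--Szmuc--Zirattu;
\item it gets $(2)=(5)$ in one line from Theorem \ref{cappelletti}, tacitly identifying De Morgan--P\l onka sums over $\mathcal{RISL}$ with P\l onka sums of De Morgan lattices;
\item it checks directly only $(3)\subseteq(1)$ (the generators satisfy R-Abs) and $(4)\subseteq(3)$ ($\mathbf{DM}_4^\dagger$ is a quotient of $\mathbf{DM}_4\times\mathbf{IS}_2$).
\end{itemize}
You instead close the cycle $(2)\subseteq(1)\subseteq(5)\subseteq(2)$ using only results stated in the paper. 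The hard direction, that R-Abs axiomatises $R(\mathcal{DML})$ relative to $\mathcal{DMBL}$, comes from the representation in Theorem \ref{accozzato} together with the $\mathcal{RISL}$ lemma of Section \ref{invosemi}. This is exactly the template the paper itself uses for the new Theorems \ref{allagamento} and \ref{mareggiata}. You also obtain $(3)=(4)$ by $HSP$ arguments in both directions, not just one. Your version is self-contained, and it makes explicit the reduction of $\DPL$ sums over $\mathcal{RISL}$ to ordinary P\l onka sums, which the paper's appeal to Theorem \ref{cappelletti} leaves implicit. The paper's version is shorter but rests on external results for the core equalities. Your ``expected obstacle'' is in fact immediate from facts the paper already records: $n_{ii}^{-1}=n_{ii}$ because $n_{\lnot i,i}=n_{i,\lnot i}^{-1}$, and equivariance gives $n_{jj}\circ p_{ij}=p_{ij}\circ n_{ii}$.
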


\begin{proof}
     The coincidence of (1) and (2) is proved in \cite{Hornischer} and the coincidence of the previous two items and (4) is established in \cite{PaoliSzmucZirattu}. Since $V(\mathbf{IS}_2)$ is term-equivalent to the variety of semilattices, the coincidence of (2) and (5) follows from Theorem \ref{cappelletti}. (3) is included in (1) because both $\mathbf{DM}_4$ and $\mathbf{IS}_2$ satisfy the axioms of $\mathcal{RDMBL}$. Finally, (4) is included in (3) because $\mathbf{{DM}_4^\dagger}$ is a quotient of $\mathbf{DM}_4 \times \mathbf{IS}_2$.
\end{proof}

We now proceed to proving a couple of results that are entirely new. A \emph{bipolarly absorptive} De Morgan bisemilattice  is a member of $\mathcal{DMBL}$ that satisfies the identity
\[
\text{B-Abs }x \land (x \lor \lnot x) \approx x \land (x \lor \lnot x) \land (x \lor \lnot x \lor y).
\] 
The variety of bipolarly absorptive De Morgan bisemilattices is noted $\mathcal{BDMBL}$.

\begin{theorem}\label{allagamento}
The following varieties are all coincident:
    \begin{enumerate}
        \item $\mathcal{BDMBL}$;
        \item $Bip(\mathcal{DML})$;
        \item $V(\mathbf{DM}_4,\mathbf{IS}_3)$;
        \item The class of De Morgan-P\l onka sums of distributive lattices over members of $V(\mathbf{IS}_3)$. 
    \end{enumerate}
\end{theorem}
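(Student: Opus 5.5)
I will prove the four items equivalent via the cycle (2)$\,=\,$(3), (3)$\,\subseteq\,$(1), (1)$\,\subseteq\,$(4), (4)$\,\subseteq\,$(2).

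\emph{(2) coincides with (3).} I would argue as in Theorem \ref{accozzato}. An identity holds in $V(\mathbf{DM}_4,\mathbf{IS}_3)$ iff it holds in $\mathcal{DML}$ (by Theorem \ref{periscopio}) and, as an identity in the $\langle 2,1\rangle$-reduct obtained by identifying $\land$ with $\lor$, in $\mathbf{IS}_3$. By Proposition \ref{prop:characterization-IS3}, the latter holds iff the identity is bipolarly balanced. This is exactly the definition of $Bip(\mathcal{DML})$.

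\emph{(3) is included in (1).} Both generators are De Morgan bisemilattices, so it suffices to check B-Abs in each. In $\mathbf{DM}_4$ it is an instance of lattice absorption. In $\mathbf{IS}_3$ (with $\land=\lor$) the left side is $x\lor\lnot x$, which is bipolar, and the right side is bipolar too; so B-Abs is bipolarly balanced and holds by Proposition \ref{prop:characterization-IS3}.

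\emph{(1) is included in (4).} Let $\B\in\mathcal{BDMBL}$. By Theorem \ref{accozzato} and its proof, $\B=\DPL(F)$ over $\mathbf{I}=\langle B,\cdot,\lnot\rangle/\mathcal{D}$, with $x\cdot y:=x\land(x\lor y)$ and distributive-lattice fibres. By Lemma \ref{mezzitermini}, it suffices to show $\B\vDash x\cdot\lnot x\approx x\cdot\lnot x\cdot y$. Unfolding, $x\cdot\lnot x = x\land(x\lor\lnot x)$. Using distributivity and the semilattice laws, I expect $(x\cdot\lnot x)\cdot y$ to equal $x\land(x\lor\lnot x)\land(x\lor\lnot x\lor y)$, possibly after a rewriting of the form $a\land(a\lor y)$ with $a=x\land(x\lor\lnot x)$. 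B-Abs then collapses this to $x\cdot\lnot x$. This computation is the step I expect to be the main obstacle: $a\lor y=(x\lor y)\land(x\lor\lnot x\lor y)$ produces an extra conjunct $x\lor y$, which must be absorbed. I would handle it by noting that $x\land(x\lor\lnot x)\land(x\lor y)$ can be brought to $x\land(x\lor\lnot x\lor y)\land\ldots$ by distributivity. Failing a direct derivation, I would argue semantically: in a De Morgan–Płonka sum, evaluate both sides in the fibre of $[x]\lor\lnot[x]\lor[y]$ and use B-Abs to force the transition from $[x]\lor\lnot[x]$ to that index to be trivial in the relevant sense, yielding $\mathcal{D}$-equality. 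This $\mathcal{D}$-equality is all that Lemma \ref{mezzitermini}(3) needs, since $[x\cdot\lnot x\cdot y]_\mathcal{D}=[x]\lor\lnot[x]\lor[y]$.

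\emph{(4) is included in (2).} Let $\A=\DPL(F)$ with distributive-lattice fibres over $\mathbf{I}\in V(\mathbf{IS}_3)$, and let $\varphi\approx\psi$ be bipolarly balanced and valid in $\mathcal{DML}$. If it is balanced regular, it holds in $\A$ by Theorem \ref{th:DM-Plonka sums and balanced equations}, since the bilateralisations of distributive lattices lie in $\mathcal{DML}$. If it is bipolar, then under any assignment both sides land in the same fibre. Indeed, the index of $h(\varphi)$ is $\bigvee_{Var^+(\varphi)}i_x\lor\bigvee_{Var^-(\varphi)}\lnot i_x$, which in $\mathbf{I}$ equals the top element $\bigvee_x(i_x\lor\lnot i_x)$ over all variables, by the $\mathbf{IS}_3$ identity $x\lor\lnot x\approx x\lor\lnot x\lor y$. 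So both sides live in a fibre $F(k)$ with $k=\lnot k$. All arguments can then be pushed into $F(k)^\lnot\in\mathcal{DML}$, and $\varphi,\psi$ are evaluated there via the homomorphic transition and dualising maps. Since $F(k)^\lnot$ is a subalgebra of $\A$ belonging to $\mathcal{DML}$, the identity holds. Checking that the evaluation commutes with pushing into $F(k)$ requires a routine induction on terms using equivariance of the system.
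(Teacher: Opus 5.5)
Your proposal is correct and follows the paper's proof essentially step for step. It uses the same cycle of inclusions, Lemma \ref{mezzitermini} applied to the representation from Theorem \ref{accozzato} for (1)$\subseteq$(4), and for (4)$\subseteq$(2) the same split into the balanced case (Randriamahazaka's preservation theorem) and the bipolar case, where both sides land in the negation-fixed fibre of index $i\lor\lnot i$; your explicit transfer of the assignment into $F(k)^\lnot$ via equivariance is more careful than the paper's one-line conclusion there.

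The step you flag as the main obstacle is immediate. The identity $x\cdot\lnot x\cdot y\approx x\land(x\lor\lnot x)\land(x\lor\lnot x\lor y)$ is balanced regular and valid in $\mathcal{DML}$, so it holds in $\mathcal{DMBL}=B(\mathcal{DML})$, and B-Abs then gives $x\cdot\lnot x\approx x\cdot\lnot x\cdot y$ at once. Your semantic fallback also works: compare the fibre indices $[x]\lor\lnot[x]$ and $[x]\lor\lnot[x]\lor[y]$ of the two sides of B-Abs. It even yields $\mathbf{I}\in\mathcal{BISL}$ directly, without passing through the band identity.
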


\begin{proof}
    (1) is included in (4). Let $\A \in \mathcal{BDMBL}$. Since $\mathcal{BDMBL} \subseteq \mathcal{DMBL}$, by Theorem \ref{accozzato} $\A$ is representable as a De Morgan-P\l onka sum of distributive lattices. However, by (C-Abs) its a-involutive left-normal band term reduct $\langle A, \cdot, \lnot \rangle$ satisfies $x \cdot \lnot x \approx x \cdot \lnot x \cdot y$, whereby, by Lemma \ref{mezzitermini} $\langle A, \cdot, \lnot \rangle / \mathcal{D} \in \mathcal{BISL}$. Hence, by Proposition \ref{prop:characterization-IS3}, its involutive semilattice of indices belongs to $V(\mathbf{IS}_3)$.
    

    (2) coincides with (3). By definition, $Bip(\mathcal{DML})$ satisfies the identity $\varphi \approx \psi$ iff $\mathcal{DML}$ satisfies $\varphi \approx \psi$ and $\varphi \approx \psi$ is bipolarly balanced. By Theorem \ref{periscopio} and Proposition \ref{prop:characterization-IS3}, this happens iff both $\mathbf{DM}_4$ and $\mathbf{IS}_3$ satisfy $\varphi \approx \psi$.

    (3) is included in (1). It is easy to see that both $\mathbf{DM}_4$ and $\mathbf{IS}_3$ satisfy all the axioms of $\mathcal{BDMBL}$, whence the inclusion follows.

    (4) is included in (2). Let $\A$ be representable as a De Morgan-P\l onka sum of distributive lattices over some member of $V(\mathbf{IS}_3) = \mathcal{BISL}$. We need to establish that $\A$ satisfies all the bipolarly balanced identities satisfied in $\mathcal{DML}$. 
    
    We implement an adaptation of the proof found in \cite[Thm. 2.3.2]{BonzioPaoliPraBaldi}. Assume that $\varphi \approx \psi$ is valid in all De Morgan lattices and $\varphi \approx \psi$ is bipolarly balanced. From the fact that $\varphi \approx \psi$ is bipolarly balanced, we reason depending on whether it is a bipolar or a balanced identity. 
    
    If it is balanced, then, since it is also valid in all De Morgan lattices and hence in all bilateralisations of the fibres in the De Morgan-P\l onka representation of $\A$, by Theorem \ref{th:DM-Plonka sums and balanced equations} $\A \vDash \varphi \approx \psi$.
    
    If it is bipolar, then the conjunction of the facts that $Var^{+}(\varphi) \cap Var^{-}(\varphi) \neq \emptyset$ and $Var^{+}(\psi) \cap Var^{-}(\psi) \neq \emptyset$ and that the underlying involutive semilattice is in $\mathcal{BISL}$ entail that for all $h \in Hom(\mathbf{Fm}(\langle 2, 2, 1 \rangle), \A)$ we have that $h(\varphi)$ belongs to $A_{i\lor\lnot i \lor k}$, where $h(x) \in A_i$ and $h(\lnot x) \in A_{\lnot i}$ for some $x  \in Var^{+}(\varphi) \cap Var^{-}(\varphi)$ and $h(\psi)$ belongs to $A_{j\lor\lnot j \lor l}$, where $h(y) \in A_j$ and $h(\lnot y) \in A_{\lnot j}$ for some $y \in Var^{+}(\psi) \cap Var^{-}(\psi)$. However, since the underlying involutive semilattice is in $\mathcal{BISL}$, we know that $i \lor \lnot i \lor k \lor j \lor \lnot j \lor l = i \lor \lnot i$ and $j \lor \lnot j \lor l \lor i \lor \lnot i \lor k = j \lor \lnot j$, which ultimately entails that $i \lor \lnot i = j \lor \lnot j$. Whence, for all $h \in Hom(\mathbf{Fm}(\langle 2, 2, 1 \rangle), \A)$, $h(\varphi)$ and $ h(\psi)$ belong to the same fibre $F(i \lor \lnot i) = F(\lnot (i \lor \lnot i))$. Since $\varphi \approx \psi$ holds in all De Morgan lattices, hence in all bilateralisations of distributive lattices, and $i \lor \lnot i$ is a fixpoint of negation, it follows that $\A \vDash \varphi \approx \psi$.

\end{proof}

A \emph{regular bipolarly absorptive} De Morgan bisemilattice  is a member of $\mathcal{DMBL}$ that satisfies the identity
\[
\text{RB-Abs }x \land (x \lor \lnot x) \land (x \lor \lnot x \lor y) \approx x \land (x \lor \lnot x) \land (x \lor \lnot x \lor \lnot y).
\] 
The variety of regular bipolarly absorptive De Morgan bisemilattices is noted $\mathcal{RBDMBL}$.

\begin{theorem}\label{mareggiata}
The following varieties are all coincident:
    \begin{enumerate}
        \item $\mathcal{RBDMBL}$;
        \item $R(Bip(\mathcal{DML}))$;
        \item $V(\mathbf{DM}_4,\mathbf{IS}_2,\mathbf{IS}_3)$;
        \item The class of De Morgan-P\l onka sums of distributive lattices over members of $V(\mathbf{IS}_2,\mathbf{IS}_3)$. 
    \end{enumerate}
\end{theorem}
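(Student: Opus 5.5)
We follow the four-step cycle used for Theorem \ref{allagamento}: (1) $\subseteq$ (4), (2) $=$ (3), (3) $\subseteq$ (1), and (4) $\subseteq$ (2).

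\emph{(2) $=$ (3).} By definition, $R(Bip(\mathcal{DML}))$ satisfies $\varphi \approx \psi$ iff $\mathcal{DML} \vDash \varphi \approx \psi$ and $\varphi\approx\psi$ is regular bipolarly balanced. By Theorem \ref{periscopio}, the first condition amounts to $\mathbf{DM}_4 \vDash \varphi\approx\psi$. By Corollary \ref{stritto}, the second amounts to $\mathbf{IS}_2$ and $\mathbf{IS}_3$ both satisfying $\varphi\approx\psi$ (their $\langle 2,2,1\rangle$ versions satisfy exactly the identities whose $\land/\lor$-identified translations they satisfy).

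\emph{(3) $\subseteq$ (1).} We check RB-Abs directly in each generator. In $\mathbf{DM}_4$ both sides reduce to $x$ by absorption. In $\mathbf{IS}_2$ we have $\lnot y = y$. In $\mathbf{IS}_3$, $x\lor\lnot x$ is the top ${\bf j}$, so both sides equal ${\bf j}$.

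\emph{(1) $\subseteq$ (4).} Let $\A\in\mathcal{RBDMBL}$. By Theorem \ref{accozzato}, $\A$ is a De Morgan-P\l onka sum over $\langle A,\cdot,\lnot\rangle/\mathcal{D}$, where $x\cdot y := x\land(x\lor y)$. We need to rewrite RB-Abs in band terms. First, $x\cdot\lnot x = x\land(x\lor\lnot x)$. Next, $(x\cdot \lnot x)\cdot y = x\land(x\lor\lnot x)\land(x\land(x\lor\lnot x)\lor y)$, and distributivity turns this into the left side of RB-Abs, up to terms absorbable in a bisemilattice; this bookkeeping is where care is needed. Granting the match, RB-Abs becomes $x\cdot\lnot x\cdot y \approx x\cdot \lnot x\cdot\lnot y$. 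Lemma \ref{tiramolla} then gives $\A/\mathcal{D}\in\mathcal{RBISL} = V(\mathbf{IS}_2,\mathbf{IS}_3)$.

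\emph{(4) $\subseteq$ (2).} This is the main step. Let $\A=\DPL(F)$ with index algebra $\mathbf{I}\in\mathcal{RBISL}$, and let $\varphi\approx\psi$ be valid in $\mathcal{DML}$ and regular bipolarly balanced. If it is balanced regular, Theorem \ref{th:DM-Plonka sums and balanced equations} applies as before. Otherwise it is bipolar and regular. Fix $h$, and let $i_\varphi$, $i_\psi$ be the indices of $h(\varphi)$ and $h(\psi)$. Each index is the join of $h$-indices of the variables, taken with $\lnot$ for negative occurrences. Pick $x\in Var^+(\varphi)\cap Var^-(\varphi)$, whose index $a$ gives $a\lor\lnot a\le i_\varphi$. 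The identity $(a\lor\lnot a)\lor b = (a\lor\lnot a)\lor\lnot b$ holds in $\mathbf{I}$, so each variable's index occurs in the join regardless of polarity. Hence $i_\varphi = a\lor\lnot a\lor\bigvee_{z\in Var(\varphi)}(\text{index of } z)$, and this join is a fixpoint of $\lnot$. Applying the same argument to $\psi$ with $y$ and index $c$, regularity gives the same set of variables. Since $x\in Var(\psi)$, $a\lor\lnot a\le i_\psi$ and symmetrically, so $i_\varphi = i_\psi =: k$, a $\lnot$-fixpoint. Then both $h(\varphi)$ and $h(\psi)$ are computed in the De Morgan lattice $F(k)^\lnot$: every argument is pushed into fibre $k$ via transition and dualising maps, which commute by equivariance. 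That lattice is a member of $\mathcal{DML}$ (it is a subalgebra of $\DPL(F)$), and the identity holds there.

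I expect the main obstacle to be this last step: showing rigorously that the evaluation in $\DPL(F)$ factors through $F(k)^\lnot$. Inner subterms live in lower fibres, and their images must commute with the maps $p$ and $n$. I would prove this by induction on subterms, using the compatibility $n_{j,\lnot j}\circ p_{ij}=p_{\lnot i,\lnot j}\circ n_{i,\lnot i}$, exactly as in the bipolar case of Theorem \ref{allagamento}.
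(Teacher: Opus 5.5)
Your proposal is correct and follows essentially the same four-step cycle as the paper: Lemma \ref{tiramolla} for (1)$\subseteq$(4), the generators for (2)$=$(3) and (3)$\subseteq$(1), and a reduction of the bipolar regular case to a single $\lnot$-fixed fibre. Your explicit derivation of $i_\varphi=i_\psi$ is a cleaner version of the paper's $\alpha_1,\alpha_2,\beta_1,\beta_2$ computation.

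Two small points close what you left open. The step you grant in (1)$\subseteq$(4) follows at once: $(x\cdot\lnot x)\cdot y\approx x\land(x\lor\lnot x)\land(x\lor\lnot x\lor y)$ is balanced regular and valid in $\mathcal{DML}$, so it holds in $\mathcal{DMBL}=B(\mathcal{DML})$ by Theorem \ref{accozzato}. In your closing induction, every subterm index lies below $k$ because $\lnot$ is monotone and $\lnot k=k$.
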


\begin{proof}

(1) is included in (4). Let $\A \in \mathcal{RBDMBL}$. Since $\mathcal{RBDMBL} \subseteq \mathcal{DMBL}$, by Theorem \ref{accozzato} $\A$ is representable as a De Morgan-P\l onka sum of distributive lattices. However, by (RB-Abs) its a-involutive left-normal band term reduct $\langle A, \cdot, \lnot \rangle$ satisfies $x \cdot \lnot x \cdot y \approx x \cdot \lnot x \cdot \lnot y$, whereby, by Lemma \ref{tiramolla} $\langle A, \cdot, \lnot \rangle / \mathcal{D} \in \mathcal{RBISL}$. Hence, by Corollary \ref{stritto}, its involutive semilattice of indices belongs to $V(\mathbf{IS}_2,\mathbf{IS}_3)$.
    

    (2) coincides with (3). By definition, $R(Bip(\mathcal{DML}))$ satisfies the identity $\varphi \approx \psi$ iff $\mathcal{DML}$ satisfies $\varphi \approx \psi$ and $\varphi \approx \psi$ is either bipolar and regular, or balanced regular. The latter conditions amount to being both regular and either bipolar or balanced. By Theorem \ref{periscopio} Fact  \ref{fact:characterization-IS2} and Proposition \ref{prop:characterization-IS3}, this happens iff $\mathbf{DM}_4$, $\mathbf{IS}_2$ and $\mathbf{IS}_3$ all satisfy $\varphi \approx \psi$.

    (3) is included in (1). It is easy to see that $\mathbf{DM}_4$, $\mathbf{IS}_2$ and $\mathbf{IS}_3$ satisfy all the axioms of $\mathcal{RBDMBL}$, whence the inclusion follows.

    (4) is included in (2). Let $\A$ be representable as a De Morgan-P\l onka sum of distributive lattices over some member of $V(\mathbf{IS}_2,\mathbf{IS}_3) = \mathcal{RBISL}$. We need to establish that $\A$ satisfies all the regular bipolarly balanced identities satisfied in $\mathcal{DML}$. 
    
    Assume that $\varphi \approx \psi$ is valid in all De Morgan lattices and $\varphi \approx \psi$ is regular bipolarly balanced. We distinguish cases according to whether they are bipolar and regular, or balanced regular. If the latter, we argue as in Theorem \ref{allagamento} and conclude that $\A \vDash \varphi \approx \psi$.
    
    If the former, then the conjunction of the facts that $Var(\varphi) = Var(\psi)$ and $Var^{+}(\varphi) \cap Var^{-}(\varphi) \neq \emptyset$ and $Var^{+}(\psi) \cap Var^{-}(\psi) \neq \emptyset$ and that the underlying involutive semilattice $\mathbf{I}$ is in $\mathcal{RBISL}$, entail that for all $h \in Hom(\mathbf{Fm}(\langle 2, 2, 1 \rangle), \A)$ we have that $h(\varphi)$ belongs to $A_{(i\lor\lnot i)\lor k}$, where $h(x) \in A_i$ and $h(\lnot x) \in A_{\lnot i}$ for some $x  \in Var^{+}(\varphi) \cap Var^{-}(\varphi)$ and $h(\psi)$ belongs to $A_{(j\lor\lnot j)\lor l}$, where $h(y) \in A_j$ and $h(\lnot y) \in A_{\lnot j}$ for some $y \in Var^{+}(\psi) \cap Var^{-}(\psi)$. Since $\varphi \approx \psi$ is a regular identity, we can assume that $i\lor\lnot i\lor k = \alpha^\mathbf{I} (v_1,...,v_n)$ and $j\lor\lnot j\lor l = \beta^\mathbf{I} (v_1,...,v_n)$, where $\alpha$ and $\beta$ are formulas of type $\langle 2, 1 \rangle$ actually containing the variables $v_1,...,v_n$. In particular, let $\alpha_1$ and $\alpha_2$ be the subformulas of $\alpha$ such that $\alpha_1^\mathbf{I} (v_1,...,v_n) = i, \alpha_2^\mathbf{I} (v_1,...,v_n) = k$, and let $\beta_1$ and $\beta_2$ be the subformulas of $\beta$ such that $\beta_1^\mathbf{I} (v_1,...,v_n) = j, \beta_2^\mathbf{I} (v_1,...,v_n) = l$.

    We have that:
    \begin{eqnarray*}
        i \lor \lnot i \lor k&=&\alpha_1^\mathbf{I} (\Vec{v}) \lor \lnot \alpha_1^\mathbf{I} (\Vec{v}) \lor \alpha_2^\mathbf{I} (\Vec{v}) \\
        &=&\alpha_1^\mathbf{I} (\Vec{v}) \lor \lnot \alpha_1^\mathbf{I} (\Vec{v}) \lor \alpha_2^\mathbf{I} (\Vec{v}) \lor \lnot \alpha_2^\mathbf{I} (\Vec{v}) \\
        &=& \beta_1^\mathbf{I} (\Vec{v}) \lor \lnot \beta_1^\mathbf{I} (\Vec{v}) \lor \beta_2^\mathbf{I} (\Vec{v}) \lor \lnot \beta_2^\mathbf{I} (\Vec{v})\\
        &=& \beta_1^\mathbf{I} (\Vec{v}) \lor \lnot \beta_1^\mathbf{I} (\Vec{v}) \lor \beta_2^\mathbf{I} (\Vec{v}) \\
        &=& j \lor \lnot j \lor l.
    \end{eqnarray*}
Here, the second and fourth equalities hold because $\mathbf{I} \in \mathcal{RBISL}$, while the third equality holds because of the behaviour of transition and dualising maps in De Morgan-P\l onka sums, given that $\alpha$ and $\beta$ contain the same variables.
    
As a consequence, $h(\varphi)$ and $h(\psi)$ belong to the same fibre $F(i \lor \lnot i \lor k ) = F(i \lor \lnot i \lor k \lor \lnot k) = F(\lnot (i \lor \lnot i \lor k \lor \lnot k))$. Since $\varphi \approx \psi$ holds in all De Morgan lattices, hence in all bilateralisations of distributive lattices, and $i \lor \lnot i \lor k \lor \lnot k$ is a fixpoint of negation, it follows that $\A \vDash \varphi \approx \psi$.
\end{proof}

\vspace{2mm}

As expected, the equivalences in the four theorems above are preserved---mutatis mutandis---by replacing $\mathcal{DML}$ by $\mathcal{KL}$ and $\mathcal{BA}$.

\begin{theorem}\label{cantuccio}
Let $\A \in \{ \mathbf{B}_2,\mathbf{K}_3 \} $. The following varieties are all coincident:

\begin{enumerate}
    \item $B(V(\A))$;
    \item $V(\A, {\bf IS}_4)$;
    \item the class $\mathcal{K}$ of isomorphic images of algebras in
    \[
    \{ \DPL(F) : F: \mathbf{I} \to \mathcal{DL}, \flat F(i) \in V(\A) \text{ for every }i \in I \}.
    \]
\end{enumerate}
\end{theorem}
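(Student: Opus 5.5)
We prove the three inclusions (1) $=$ (2), (2) $\subseteq$ (3) and (3) $\subseteq$ (1), mirroring the proof of Theorem \ref{accozzato}.

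\emph{(1) coincides with (2).} By definition, $B(V(\A))$ satisfies $\varphi \approx \psi$ iff $\A \vDash \varphi \approx \psi$ and $\varphi \approx \psi$ is balanced regular. By Fact \ref{fact:characterization-IS4}, the second condition is equivalent to ${\bf IS}_4 \vDash \varphi \approx \psi$ when the identity is read in the $\langle 2,2,1\rangle$ type, where $\land$ and $\lor$ coincide on ${\bf IS}_4$. Hence the two varieties have the same equational theory.

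\emph{(2) is included in (3).} Let $\B \in V(\A,{\bf IS}_4)$. Since $V(\A,{\bf IS}_4) \subseteq \mathcal{DMBL}$, Theorem \ref{accozzato} and its proof give a representation $\B = \DPL(F)$ over $\langle B,\cdot,\lnot\rangle/\mathcal{D}$, with $x\cdot y := x \land (x \lor y)$, whose fibres $F(i)$ are the $\mathcal{D}$-classes, which are distributive lattices. We must show $\flat F(i) \in V(\A)$. The key observation is that $\flat F(i)$ embeds into a subalgebra of $\B$ (or of $\B^2$) that satisfies absorption, and hence lies in $V(\B) \cap \mathcal{DML} = V(\A)$. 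The last equality holds because an identity of $\A$ that is valid in $\mathcal{DML}$ modulo absorption can be rewritten, using absorption, as a balanced regular consequence.

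A cleaner route: $\flat F(i) \cong F(i) \times F(i)^d$ with swap negation, and $F(i)$ together with $F(\lnot i) = F(i)^d$ gives the sub-universe $F(i) \cup F(\lnot i)$. Concretely, $\flat F(i)$ is isomorphic to a subalgebra of the De Morgan lattice $F(i \lor \lnot i)^\lnot \times F(i \lor \lnot i)^\lnot$, or more simply of $\B \times \B$ restricted to pairs $\langle a, \lnot b\rangle$ with $a,b \in F(i)$. I would first try the map $\langle a,b\rangle \mapsto \langle a, \lnot b\rangle \in F(i)\times F(\lnot i)$ inside $\B^2$ and check that it is an embedding into a subalgebra of $\B^2$. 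That subalgebra is not closed under $\lnot$, so I would instead take the subalgebra $\{\langle a,\lnot b\rangle\} \cup \{\langle \lnot a, b\rangle\}$ and quotient it appropriately.

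\textbf{The main obstacle} is this step: producing a De Morgan lattice in $V(\B)$ that contains $\flat F(i)$. The fallback is the equational route. Suppose $\flat F(i) \nvDash \varphi \approx \psi$ for some identity valid in $\A$. By Kalman's theorem (Theorem \ref{periscopio}), $V(\A)$ is axiomatised relative to $\mathcal{DML}$ by the single Kleene identity $x \land \lnot x \approx x \land \lnot x \land (y \lor \lnot y)$, respectively the Boolean identity $x \land \lnot x \approx (x\land\lnot x)\land y$. So it suffices to check that identity, which is not balanced, in $\flat F(i)$. The plan is to convert it into a balanced regular identity valid in $\A$, for instance
\[
(x \land \lnot x)\land(y\lor\lnot y) \approx (x\land \lnot x)\land (y \lor \lnot y)\land (x\lor\lnot x)
\]
in the Kleene case, such that evaluating it in $\B$ at elements of $F(i)\cup F(\lnot i)$ lands everything in the fibre $F(i\lor\lnot i)$. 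Transporting along $p_{i,i\lor\lnot i}$ then yields the original identity in $\flat F(i)$, though only modulo the transition maps. If those maps are not injective, I would argue instead via the bilateralisation identity of Theorem \ref{th:DM-Plonka sums and balanced equations} read backwards.

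\emph{(3) is included in (1).} Take $\DPL(F)$ with all $\flat F(i) \in V(\A)$. Every balanced regular identity valid in $\A$ holds in each $\flat F(i)$, hence in $\DPL(F)$ by Theorem \ref{th:DM-Plonka sums and balanced equations}. Therefore $\DPL(F) \in B(V(\A))$, and the same holds for its isomorphic images.
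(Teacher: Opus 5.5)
\textbf{Verdict: there is a genuine gap in (2)$\subseteq$(3), and it cannot be closed.} Your arguments for (1)$=$(2) and for (3)$\subseteq$(1) are the same as the paper's: Fact~\ref{fact:characterization-IS4} for the first, and Theorem~\ref{th:DM-Plonka sums and balanced equations} for the second.

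Your ``key observation'' is false: it claims that $\flat F(i)$ sits inside an absorptive subalgebra of $\B$ or of $\B^2$, and hence lies in $V(\A)$. The claim your equational fallback aims at is false for the same reason. Take $\B=\A$.
\begin{itemize}
\item Since $\A$ satisfies absorption, every De Morgan--P\l onka representation of $\A$ has a single fibre. Indeed, if $a\in F(i)$ and $b\in F(j)$, then $a=a\land(a\lor b)\in F(i\lor j)$ forces $j\leq i$, and symmetrically $i\leq j$.
\item That fibre is the lattice reduct of $\A$, a nontrivial chain $D$.
\item $\flat D$ contains $\flat\mathbf{D}_2\cong\mathbf{DM}_4$. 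Here $\langle 1,0\rangle$ and $\langle 0,1\rangle$ are top and bottom, and $\langle 1,1\rangle$, $\langle 0,0\rangle$ are the two fixpoints of $\lnot$.
\item The Kleene identity you propose to check (a fortiori the Boolean one) fails there. At $x=\langle 1,1\rangle$, $y=\langle 0,0\rangle$, the left side is $\langle 1,1\rangle$ and the right side is $\langle 0,1\rangle$.
\end{itemize}
So $\flat F(i)\notin V(\A)$, and no embedding into $\B^2$ or transport along $p_{i,i\lor\lnot i}$ can change that. In fact, with $\flat$ as defined in the paper, $\flat F(i)\in V(\A)$ forces $F(i)$ to be trivial. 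Hence $\mathcal{K}$ consists exactly of the involutive semilattices, and $\A\in V(\A,\mathbf{IS}_4)\setminus\mathcal{K}$.

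The paper does not argue fibre by fibre; it compares equational theories. Theorem~\ref{th:DM-Plonka sums and balanced equations} gives that $\mathcal{K}$ satisfies every balanced regular identity of $V(\A)$. The converse comes from the assertion that $\A,\mathbf{IS}_4\in\mathcal{K}$. Even granting that assertion, this only yields $V(\mathcal{K})=B(V(\A))$, not that $\mathcal{K}$ is closed under $H$, $S$, $P$. Moreover, $\A\in\mathcal{K}$ fails for exactly the reason above.

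So the obstacle you flagged is a defect in how item (3) is formulated, not a missing idea on your side: the fibre condition has to be replaced, not verified. The natural replacement would be to require $F(i)^\lnot\in V(\A)$ at the fixpoints $i=\lnot i$. This is necessary but not sufficient. Consider the sum over $\mathbf{IS}_4$ with $F(\mathbf{i})=F(\mathbf{k})=\mathbf{D}_1$, $F(\mathbf{j})=\mathbf{D}_2$, and $p_{\mathbf{i}\mathbf{j}}$ sending the point of $F(\mathbf{i})$ to the top of $\mathbf{D}_2$. It has trivial fixpoint fibres. Yet it fails $(x\land\lnot x)\land(y\lor z)\approx(x\land\lnot x)\land(y\land z)$, a balanced regular identity valid in $\mathbf{B}_2$. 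Hence any correct version of (3) must also constrain the transition maps.
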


\begin{proof}
The coincidence of (1) and (2) holds by Fact~\ref{fact:characterization-IS4}. 

We show the coincidence of (1) and (3). By definition, $B(V(\A))$ is the variety satisfying all and only the balanced regular identities valid in $V(\A)$. By Theorem \ref{th:DM-Plonka sums and balanced equations}, the class $\mathcal{K}$ satisfies all the balanced regular identities valid in all algebras of the form $\flat F(i)$, for $F$ having the indicated properties. Hence, it satisfies all the balanced regular identities valid in $V(\A)$. But since $\mathcal{K}$ contains $\mathbf{IS}_4$ and $\A$, it satisfies only balanced regular identities valid in $V(\A)$. Hence, our conclusion follows.
\end{proof}

\begin{theorem}
Let $\A \in \{ \mathbf{B}_2,\mathbf{K}_3 \} $. The following varieties are all coincident:

\begin{enumerate}
    \item $R(V(\A))$;
    \item $V(\A, {\bf IS}_2)$;
    \item $V(\A^\dagger)$;
    \item the class $\mathcal{K}$ of isomorphic images of algebras in
    \[
    \{ \DPL(F) : F: \mathbf{I} \to \mathcal{DL}, \mathbf{I} \in V(\mathbf{IS}_2),\flat F(i) \in V(\A) \text{ for every }i \in I \}.
    \]
\end{enumerate}
\end{theorem}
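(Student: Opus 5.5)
We prove the four items equal by arguing (1)$=$(2), (3)$\subseteq$(2)$\subseteq$(3), and (1)$=$(4), mirroring the proof of Theorem \ref{servosterzo} and the proof of Theorem \ref{cantuccio} just given.

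\emph{(1) coincides with (2).} By definition $R(V(\A))$ satisfies exactly the regular identities valid in $V(\A)$. By Theorem \ref{periscopio}, $V(\A)$ is $\mathcal{KL}$ or $\mathcal{BA}$, hence strongly irregular; indeed $x\approx x\land(x\lor y)$ holds there. By Fact \ref{fact:characterization-IS2}, an identity of type $\langle 2,1\rangle$ holds in $\mathbf{IS}_2$ iff it is regular. This transfers to type $\langle 2,2,1\rangle$, since in $\mathbf{IS}_2$ we have $\land=\lor$ and $\lnot$ is the identity. Hence an identity holds in both $\A$ and $\mathbf{IS}_2$ iff it is regular and valid in $V(\A)$.

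\emph{(3) is included in (2), and conversely.} The algebra $\A^\dagger$ has universe $A\cup\{\ast\}$. Here $\ast$ is the trivial fibre over $\mathbf{j}$ and absorbs everything. It is a quotient of a subalgebra of $\A\times\mathbf{IS}_2$: take $\A\times\mathbf{IS}_2$ and collapse all pairs whose second coordinate is $\mathbf{j}$. This collapse is a congruence, because $\mathbf{j}$ is absorbing and negation-fixed. So $V(\A^\dagger)\subseteq V(\A,\mathbf{IS}_2)$. Conversely, $\A$ is the subalgebra $F(\mathbf{i})^\lnot$ of $\A^\dagger$. Also $\mathbf{IS}_2$ is the quotient of $\A^\dagger$ by the partition $\{A,\{\ast\}\}$, which is a congruence because $A$ is a subuniverse and $\ast$ is absorbing. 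This gives the reverse inclusion.

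\emph{(1) coincides with (4).} This follows the proof of Theorem \ref{cantuccio}. Let $\DPL(F)\in\mathcal{K}$, with $\mathbf{I}\in V(\mathbf{IS}_2)$. Then $\lnot$ is the identity on $\mathbf{I}$, so each fibre $F(i)$ carries the De Morgan lattice structure $F(i)^\lnot$. The operations of $\DPL(F)$ are then exactly those of the ordinary P\l onka sum of the De Morgan lattices $F(i)^\lnot$, with the dualising maps acting inside each fibre. The condition $\flat F(i)\in V(\A)$ should give $F(i)^\lnot\in V(\A)$; this is the step I must verify. One route is to identify $F(i)^\lnot$ with a subalgebra of $\flat F(i)$ via $a\mapsto\langle a,n_{ii}(a)\rangle$, up to the coordinate convention. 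Theorem \ref{affarone} (with $\mathbf{S}_2$ in $\mathbf{I}$; the degenerate case of a single fibre is trivial) then shows that $\DPL(F)$ satisfies every regular identity of $V(\A)$. Hence $\mathcal{K}\subseteq R(V(\A))$.

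For the reverse inclusion I would use Theorem \ref{cappelletti} with $\mathcal{V}=V(\A)$: every member of $R(V(\A))$ is a P\l onka sum of algebras in $V(\A)$. Such a sum can be read as a De Morgan-P\l onka sum over an involutive semilattice with trivial involution, whose fibres are the $\lnot$-free reducts and whose dualising maps are the fibre negations. This is exactly the identification noted after the definition of involutive semilattice direct systems. It remains to check $\flat F(i)\in V(\A)$. The algebra $\flat F(i)$ is isomorphic to $F(i)^\lnot\times F(i)^\lnot$, via $\langle a,b\rangle\mapsto\langle a,\lnot b\rangle$. Since $F(i)^\lnot\in V(\A)$, so is $\flat F(i)$.

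The main obstacle is this bookkeeping between the conditions ``$\flat F(i)\in V(\A)$'' and ``$F(i)^\lnot\in V(\A)$'' when the index involution is trivial. In particular, I would check carefully that the isomorphism $\flat F(i)\cong F(i)^\lnot\times F(i)^\lnot$ respects negation, and that the negation of $\DPL(F)$ agrees with the fibrewise negation.
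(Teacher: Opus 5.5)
Your arguments for (1)$=$(2) and (2)$=$(3) are correct. So is the inclusion $\mathcal{K}\subseteq R(V(\A))$. The map $a\mapsto\langle a,\lnot a\rangle$ does embed $F(i)^\lnot$ into $\flat F(i)$. Equivariance with $\lnot i=i$ gives $n_{jj}\circ p_{ij}=p_{ij}\circ n_{ii}$, so $\DPL(F)$ is the P\l onka sum of the De Morgan lattices $F(i)^\lnot$, and Theorem \ref{cappelletti} applies.

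The reverse inclusion, however, breaks exactly at the step you flagged. The map $\langle a,b\rangle\mapsto\langle a,\lnot b\rangle$ preserves $\land$ and $\lor$ but not negation. Indeed, $\lnot\langle a,b\rangle=\langle b,a\rangle$ is sent to $\langle b,\lnot a\rangle$, whereas $\lnot\langle a,\lnot b\rangle=\langle\lnot a,b\rangle$ in the direct product. What the map actually exhibits is $F(i)^\lnot\times F(i)^\lnot$ with the twisted negation $\langle x,y\rangle\mapsto\langle\lnot y,\lnot x\rangle$, i.e.\ the full twist product, not the direct product.

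No other argument can rescue this step. First, $\flat\mathbf{D}_2\cong\mathbf{DM}_4$, with $\langle 0,0\rangle$ and $\langle 1,1\rangle$ the two incomparable negation-fixed points. Second, $\flat$ turns sublattices into subalgebras, and any two-element chain in $F(i)$ is a sublattice. So $\flat F(i)$ contains a copy of $\mathbf{DM}_4$ as soon as $F(i)$ is nontrivial, whence $\flat F(i)\notin V(\mathbf{K}_3)\supseteq V(\mathbf{B}_2)$. Read literally, then, $\mathcal{K}$ contains only sums with trivial fibres, i.e.\ members of $V(\mathbf{IS}_2)$. Already $\A\in R(V(\A))$ is not in $\mathcal{K}$, since $\A$ is not an involutive semilattice.

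So $R(V(\A))\subseteq\mathcal{K}$ is not merely unproved but false under the literal condition. Item (4) has to be read with $F(i)^\lnot\in V(\A)$ in place of $\flat F(i)\in V(\A)$; this makes sense because every index of $\mathbf{I}\in V(\mathbf{IS}_2)$ is fixed by $\lnot$. This is what the paper's one-line proof tacitly does when it says the class is ``essentially the class of P\l onka sums of algebras in $V(\A)$'' and then invokes Theorem \ref{cappelletti}.

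Under that reading your proof is complete and follows the same route as the paper. The forward inclusion is as you wrote it. For the backward one, your own observation already finishes the job: a P\l onka sum of algebras in $V(\A)$ is a De Morgan-P\l onka sum over a trivially involuted index semilattice, whose fibres satisfy $F(i)^\lnot\in V(\A)$ by construction. No claim about $\flat F(i)$ is needed, so you should drop the attempted derivation of $\flat F(i)\in V(\A)$ rather than try to repair it.
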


\begin{proof}All the equivalences follow from Theorem \ref{cappelletti} and other known results concerning regular varieties (e.g., \cite[p. 487]{LPR}) once we observe that 
\[
    \{ \DPL(F) : F: \mathbf{I} \to \mathcal{DL}, \mathbf{I} \in V(\mathbf{IS}_2),\flat F(i) \in V(\A) \text{ for every }i \in I \}.
    \]
is essentially the class of P\l onka sums of algebras in $V(\A)$.
\end{proof}

\begin{theorem}\label{pagnotta}
Let $\A \in \{ \mathbf{B}_2,\mathbf{K}_3 \} $. The following varieties are all coincident:

\begin{enumerate}
    \item $Bip(V(\A))$;
    \item $V(\A, {\bf IS}_3)$;
    \item the class $\mathcal{K}$ of isomorphic images of algebras in
    \[
    \{ \DPL(F) : F: \mathbf{I} \to \mathcal{DL}, \mathbf{I} \in V(\mathbf{IS}_3),\flat F(i) \in V(\A) \text{ for every }i \in I \}.
    \]
\end{enumerate}
\end{theorem}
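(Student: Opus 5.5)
We follow the pattern of Theorems \ref{allagamento} and \ref{cantuccio}.

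First, (1) and (2) coincide. By definition, $Bip(V(\A))$ satisfies $\varphi \approx \psi$ iff $V(\A)$ satisfies it and it is bipolarly balanced. By Proposition \ref{prop:characterization-IS3}, an identity is bipolarly balanced iff it holds in $\mathbf{IS}_3$. Hence the identities of $Bip(V(\A))$ are exactly those valid in both $\A$ and $\mathbf{IS}_3$, i.e.\ the identities of $V(\A,\mathbf{IS}_3)$. Since varieties are determined by their equational theories, the two coincide.

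Next, $\mathcal{K} \subseteq Bip(V(\A))$. Let $\B=\DPL(F)$ with $\mathbf{I}\in V(\mathbf{IS}_3)=\mathcal{BISL}$ and every $\flat F(i)\in V(\A)$. Take $\varphi\approx\psi$ valid in $V(\A)$ and bipolarly balanced, and split into two cases exactly as in the last part of the proof of Theorem \ref{allagamento}.
\begin{itemize}
\item If the identity is balanced regular, Theorem \ref{th:DM-Plonka sums and balanced equations} applies directly, because it holds in all the bilateralisations $\flat F(i)\in V(\A)$.
\item If it is bipolar, let $h$ be any assignment into $\B$. The $\mathcal{BISL}$ identities force $h(\varphi)$ and $h(\psi)$ to lie in a common fibre $F(m)$ with $m=i\lor\lnot i=j\lor\lnot j$, a fixpoint of negation. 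All computations of subterms are then transported into this fibre via transition and dualising maps. So $h(\varphi),h(\psi)$ are the values of $\varphi,\psi$ in the $\mathcal{L}^\ast$-algebra $F(m)^\lnot$, evaluated at the images of the variable assignments.
\end{itemize}
We must check that $F(m)^\lnot\in V(\A)$. It is a subalgebra of $\B$ and, by the remark in Section \ref{inquina}, a De Morgan lattice. Moreover, it embeds into $\flat F(m)$ via $a\mapsto\langle a, n_{mm}(a)\rangle$. This map is a homomorphism: for the binary operations it uses that $n_{mm}$ is an isomorphism $F(m)\to F(m)^d$, and for negation it uses $n_{mm}^2=\mathrm{id}$. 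Since $\flat F(m)\in V(\A)$, we get $F(m)^\lnot\in V(\A)$, and so $\varphi\approx\psi$ holds at $h$.

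Then $V(\A,\mathbf{IS}_3)\subseteq\mathcal{K}$. By the previous step and (1)$=$(2), we already have $\mathcal{K}\subseteq V(\A,\mathbf{IS}_3)$. For the reverse inclusion it suffices to show that $\mathcal{K}$ is a variety containing $\A$ and $\mathbf{IS}_3$, or, more cheaply, to show directly that every member of $V(\A,\mathbf{IS}_3)$ lies in $\mathcal{K}$. We take the direct route. Let $\mathbf{C}\in V(\A,\mathbf{IS}_3)\subseteq V(\mathbf{DM}_4,\mathbf{IS}_3)$. By Theorem \ref{allagamento}, $\mathbf{C}$ is a De Morgan-P\l onka sum of distributive lattices over some $\mathbf{I}\in V(\mathbf{IS}_3)$. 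It remains to show that each bilateralised fibre $\flat F(i)$ lies in $V(\A)$. We expect this to be the main obstacle.

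The plan for it is as follows. $\flat F(i)$ is the twist product of the lattice $F(i)$ with its dual. For $\A=\mathbf{K}_3$, the extra requirement is the Kleene identity $x\land\lnot x\le y\lor\lnot y$. In $\flat F(i)$ this amounts to $a\land b'\le c\lor d'$ computed componentwise across $F(i)$ and its dual. We will try to deduce this from the validity in $\mathbf{C}$ of the bipolar instance
\[(x\land\lnot x)\land(y\lor\lnot y)\approx x\land\lnot x,\]
which is bipolarly balanced and valid in $\mathbf{K}_3$. Evaluating it at pairs of elements drawn from the fibres $F(i)$ and $F(\lnot i)$, which get transported to the fixpoint fibre $F(i\lor\lnot i)$, and then pulling back, should give the required inequality. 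For $\A=\mathbf{B}_2$ the same is done with $x\land\lnot x\approx (x\land\lnot x)\land y$, a bipolar identity forcing the fibres to be trivial or Boolean-like. If this pullback fails because the transition maps are not injective, the fallback is to replace $F(i)$ by the images of the transition maps into the fixpoint fibres. That changes the representation but not the sum, since the values of all terms already live in those images.
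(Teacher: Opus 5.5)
Your arguments for (1)$=$(2) and for $\mathcal{K}\subseteq Bip(V(\A))$ are sound. In particular, the embedding $a\mapsto\langle a,n_{mm}(a)\rangle$ of $F(m)^\lnot$ into $\flat F(m)$ is exactly what the bipolar case of Theorem~\ref{allagamento} needs once $\mathcal{DML}$ is replaced by $V(\A)$. You are also right that agreement of equational theories does not by itself give $V(\A,\mathbf{IS}_3)\subseteq\mathcal{K}$.

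\textbf{The gap.} The reverse inclusion is where the proof breaks, and your plan cannot work because its target is false. Suppose $F(i)$ has elements $a<b$. Then $\{\langle a,a\rangle,\langle a,b\rangle,\langle b,a\rangle,\langle b,b\rangle\}$ is a subalgebra of $\flat F(i)$ isomorphic to $\flat\mathbf{D}_2\cong\mathbf{DM}_4$. Taking $x=\langle b,b\rangle$ and $y=\langle a,a\rangle$ gives $x\land\lnot x=\langle b,b\rangle\not\le\langle a,a\rangle=y\lor\lnot y$. So $\flat F(i)\in V(\A)\subseteq\mathcal{KL}$ forces $F(i)$ to be trivial, and no bipolar identity of $\mathbf{C}$ can be pulled back to the Kleene or Boolean law there.

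Already $\mathbf{C}=\A$ is a counterexample. $\A$ is simple and does not satisfy $x\land y\approx x\lor y$, and the index map of a representation is a homomorphism onto $\mathbf{I}$. Hence every De Morgan--P\l onka representation of $\A$ is over $\mathbf{IS}_1$ with a single nontrivial fibre. Your fallback of shrinking fibres to images of transition maps changes nothing here, and in general it changes the universe unless those maps are injective.

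Read literally, then, $\mathcal{K}$ consists only of sums of trivial lattices, i.e.\ copies of members of $\mathcal{BISL}$, and does not contain $\A$. The paper's proof transfers the argument of Theorem~\ref{cantuccio}, which relies on exactly the claim that $\mathcal{K}$ contains $\A$ and $\mathbf{IS}_3$. Even granted that claim, it would only give $V(\mathcal{K})=Bip(V(\A))$, so it does not fill this hole either.

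\textbf{The missing idea.} Replace the bilateralisation condition by a condition on the negation-expanded fixpoint fibre. For $\mathbf{I}\in\mathcal{BISL}$, the element $m:=i\lor\lnot i$ is the top of $\mathbf{I}$ and its only fixpoint of $\lnot$. Require $F(m)^\lnot\in V(\A)$.

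For the forward inclusion, bipolar identities are handled exactly as in your Step 2. A balanced regular identity that is not bipolar has each variable occurring with a single polarity. After pushing negations to the variables, substitute $\lnot v$ for each negative $v$. This gives a lattice identity valid in the lattice reduct of $\A$, hence in $\mathcal{DL}$. So the original identity holds throughout $\mathcal{DML}$, and Theorem~\ref{th:DM-Plonka sums and balanced equations} applies.

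For the reverse inclusion, represent $\mathbf{C}\in V(\A,\mathbf{IS}_3)$ by Theorem~\ref{allagamento}; then $F(m)^\lnot$ is a De Morgan lattice and a subalgebra of $\mathbf{C}$. Given $\varphi\approx\psi$ valid in $\A$, let $\chi$ be the meet of the terms $v\land\lnot v$ for $v\in Var(\varphi)\cup Var(\psi)$. The identity $\varphi\lor(\varphi\land\chi)\approx\psi\lor(\psi\land\chi)$ is bipolar and valid in $\A$ by absorption. Hence it holds in $\mathbf{C}$ by (1)$=$(2), and absorption in $F(m)^\lnot$ reduces it to $\varphi\approx\psi$.
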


\begin{proof}
The coincidence of (1) and (2) holds by Fact~\ref{prop:characterization-IS3}. 

The coincidence of (2) and (3) is proved as in Theorem \ref{cantuccio}, using Theorem \ref{allagamento} instead of Theorem \ref{th:DM-Plonka sums and balanced equations}.
\end{proof}

\begin{theorem}
Let $\A \in \{ \mathbf{B}_2,\mathbf{K}_3 \} $. The following varieties are all coincident:

\begin{enumerate}
    \item $R(Bip(V(\A)))$;
    \item $V(\A, {\bf IS}_2, {\bf IS}_3)$;
    \item the class $\mathcal{K}$ of isomorphic images of algebras in
    \[
    \{ \DPL(F) : F: \mathbf{I} \to \mathcal{DL}, \mathbf{I} \in V(\mathbf{IS}_2,\mathbf{IS}_3),\flat F(i) \in V(\A) \text{ for every }i \in I \}.
    \]
\end{enumerate}
\end{theorem}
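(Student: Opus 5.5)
The plan follows the pattern of Theorems \ref{cantuccio} and \ref{pagnotta}, now using Theorem \ref{mareggiata} in place of Theorem \ref{allagamento}.

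\emph{Step 1: (1) coincides with (2).} By definition, $R(Bip(V(\A)))$ satisfies $\varphi \approx \psi$ iff $V(\A) \vDash \varphi \approx \psi$ and $\varphi \approx \psi$ is regular bipolarly balanced. By Corollary \ref{stritto}, an identity is regular bipolarly balanced iff it holds in $\mathbf{IS}_2 \times \mathbf{IS}_3$, that is, iff it holds in both $\mathbf{IS}_2$ and $\mathbf{IS}_3$. Since identities are read in type $\langle 2,2,1\rangle$ and $\mathbf{IS}_2,\mathbf{IS}_3$ satisfy $x \land y \approx x \lor y$, we identify $\land$ with $\lor$ there. Hence the identities of $R(Bip(V(\A)))$ are exactly those of $V(\A, \mathbf{IS}_2, \mathbf{IS}_3)$.

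\emph{Step 2: (3) is included in (1).} Take $\DPL(F)$ with $\mathbf{I} \in V(\mathbf{IS}_2,\mathbf{IS}_3) = \mathcal{RBISL}$ and every $\flat F(i) \in V(\A)$. Let $\varphi \approx \psi$ be regular bipolarly balanced and valid in $V(\A)$. We repeat the case split from the proof that (4) is included in (2) in Theorem \ref{mareggiata}.
\begin{itemize}
\item If the identity is balanced regular, Theorem \ref{th:DM-Plonka sums and balanced equations} gives validity, since it holds in every $\flat F(i)$.
\item If it is bipolar and regular, the computation in Theorem \ref{mareggiata} shows that, for every assignment $h$, both $h(\varphi)$ and $h(\psi)$ land in the same fibre $F(m)$, where $m = i\lor\lnot i\lor k\lor \lnot k$ is a fixpoint of $\lnot$.
\end{itemize}
In the second case we must conclude that $\varphi\approx\psi$ holds there. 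The evaluation takes place in the algebra $F(m)^\lnot$ (with $\lnot := n_{mm}$), transported by the transition maps. The transition maps and dualising maps commute appropriately, so $h(\varphi)$ and $h(\psi)$ are computed as the values of $\varphi,\psi$ in $F(m)^\lnot$ at the pushed-forward arguments. Thus it suffices that $F(m)^\lnot \in V(\A)$.

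\emph{The main obstacle} is exactly this point: we only know $\flat F(m) \in V(\A)$. The plan is to observe that $F(m)^\lnot$ embeds in $\flat F(m)$ via $a \mapsto \langle a, n_{mm}(a)\rangle$. This map is a homomorphism: $n_{mm}$ is an isomorphism $F(m)\to F(m)^d$, so it turns $f$ into $d(f)$, and negation swaps the coordinates because $n_{mm}^{-1} = n_{mm}$. It is clearly injective. Hence $F(m)^\lnot \in S(V(\A)) = V(\A)$. (The same remark is tacitly needed in Theorem \ref{pagnotta}.)

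\emph{Step 3: (2) is included in (3).} The class $\mathcal{K}$ contains the three generators:
\begin{itemize}
\item $\A$ is a one-fibre sum over $\mathbf{IS}_1$ (its $\flat$ lies in $V(\A)$ by the embedding argument run in reverse: $\flat$ of the lattice reduct of $\A$ is a subalgebra of $\A \times \A^{\partial}$-type products in $V(\A)$).
\item $\mathbf{IS}_2$ and $\mathbf{IS}_3$ are sums of trivial lattices.
\end{itemize}
By Step 2 and Step 1, $\mathcal{K} \subseteq V(\A,\mathbf{IS}_2,\mathbf{IS}_3)$. For the reverse inclusion, argue as in Theorem \ref{cantuccio}: any identity valid in $\mathcal{K}$ holds in the three generators. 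So $\mathcal{K}$ satisfies exactly the identities of $V(\A,\mathbf{IS}_2,\mathbf{IS}_3)$.

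To conclude that $\mathcal{K}$ equals this variety, and not merely has the same identities, invoke Theorem \ref{cappelloni}. Any member $\B$ of the variety is a De Morgan bisemilattice, hence a $\DPL$ of distributive lattices. Since $\B \vDash$ RB-Abs, Lemma \ref{tiramolla} places its index semilattice in $\mathcal{RBISL}$. Its fibres $F(i)$ satisfy $\flat F(i) \in V(\A)$, because $\flat F(i)$ is generated by identities of $\B$ in the $\lnot$-fixed fibre over $i\lor\lnot i$, mirroring the corresponding step for $B(V(\A))$.
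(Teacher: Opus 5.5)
Your Steps 1 and 2 follow the paper's route. The paper only says that (1)$=$(2) by Fact~\ref{fact:characterization-IS2} and Proposition~\ref{prop:characterization-IS3}, and that (2)$=$(3) follows ``as in Theorem~\ref{cantuccio}, using Theorem~\ref{mareggiata}''. Your embedding $a\mapsto\langle a,n_{mm}(a)\rangle$ of $F(m)^\lnot$ into $\flat F(m)$ is correct and supplies a point the paper leaves tacit.

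\textbf{The gap is Step 3: the claim $\A\in\mathcal{K}$ is false.}
\begin{itemize}
\item $\flat\mathbf{D}_2$ is $\mathbf{DM}_4$: the pairs $\langle 0,0\rangle$ and $\langle 1,1\rangle$ are fixed by $\lnot$, while $\langle 1,0\rangle$ and $\langle 0,1\rangle$ are the bounds, swapped by $\lnot$.
\item Every nontrivial distributive lattice $D$ contains a copy of $\mathbf{D}_2$, and $\flat$ preserves subalgebras, so $\mathbf{DM}_4\in S(\flat D)$.
\item $\mathbf{DM}_4$ fails $(x\land\lnot x)\land(y\lor\lnot y)\approx x\land\lnot x$, which holds in $\mathbf{K}_3$ and hence in $V(\A)$. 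So the condition $\flat F(i)\in V(\A)$ forces every fibre to be trivial.
\end{itemize}
In particular, the single fibre of $\A$ (its lattice reduct) violates the condition. There is no ``reverse embedding'': your map goes only one way, and $\flat$ of the lattice reduct of $\mathbf{B}_2$ is $\mathbf{DM}_4\notin\mathcal{BA}$.

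Read literally, $\mathcal{K}$ consists only of sums with trivial fibres, i.e.\ members of $\mathcal{RBISL}$ with $\land=\lor$. So item (3) as written differs from (2), and no argument can close this step. Your final paragraph fails for the same reason (take $\B=\A$). The paper's own proof has the same defect, since the argument of Theorem~\ref{cantuccio} rests on ``$\mathcal{K}$ contains $\A$''.

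\textbf{A repair.} What your Step 2 actually uses suggests replacing the fibre condition by: $F(m)^\lnot\in V(\A)$ for every $\lnot$-fixed $m\in I$. With this, both inclusions go through.
\begin{itemize}
\item (3)$\subseteq$(1). Bipolar regular identities are handled by your Step 2. A regular bipolarly balanced identity that is not bipolar must be balanced, with $Var^+\cap Var^-=\emptyset$ on both sides. Push negations to the variables and rename the negated ones. The result is a regular lattice identity valid in the lattice reduct of $\A$, hence in all distributive lattices, hence in all distributive bisemilattices. Undoing the renaming, the original identity holds in all of $\mathcal{DMBL}$.
\item (2)$\subseteq$(3). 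A member of $V(\A,\mathbf{IS}_2,\mathbf{IS}_3)$ is a sum over $\mathcal{RBISL}$ by Theorem~\ref{mareggiata}. Its $\lnot$-fixed fibres are subalgebras, hence De Morgan lattices in $V(\A,\mathbf{IS}_2,\mathbf{IS}_3)$.
\item Any such De Morgan lattice lies in $V(\A)$. Suppose $V(\A)\vDash\varphi\approx\psi$, and let $\chi$ be the meet of all $v\land\lnot v$ over the variables of both sides. Then $\varphi\land(\varphi\lor\chi)\approx\psi\land(\psi\lor\chi)$ is bipolar, regular and valid in $V(\A)$, so it holds in $V(\A,\mathbf{IS}_2,\mathbf{IS}_3)$ by your Step 1. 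Absorption then returns $\varphi\approx\psi$.
\end{itemize}
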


\begin{proof}
The coincidence of (1) and (2) holds by Fact~\ref{fact:characterization-IS2} and Proposition~\ref{prop:characterization-IS3}. 

The coincidence of (2) and (3) is proved as in Theorem \ref{cantuccio}, using Theorem \ref{mareggiata} instead of Theorem \ref{th:DM-Plonka sums and balanced equations}.
\end{proof}

\subsection{Other subvarieties}

According to a well-known result by Dudek and Graczynska \cite{Dudek}, if $\mathcal{V}$ is a strongly irregular variety, the lattice $\mathbf{L}$ of subvarieties of $R(\mathcal{V})$ is isomorphic to $\mathbf{M} \times \mathbf{D}_2$, where $\mathbf{M}$ is the lattice of subvarieties of $\mathcal{V}$. In other words, the lattice of subvarieties of the regularisation of $\mathcal{V}$ is the direct product of the lattice of subvarieties of $\mathcal{V}$ and the lattice of subvarieties of the variety of semilattices.

This invites the conjecture that the lattice of subvarieties of the \emph{balanced} regularisation of a symmetric strongly irregular variety $\mathcal{V}$ is the direct product of the lattice of subvarieties of $\mathcal{V}$ and the lattice of subvarieties of the variety of \emph{involutive} semilattices. If true, this conjecture implies that the varieties in Figure \ref{fig:lattice-regularisations}, together with the varieties of involutive semilattices $\mathcal{T}, R(\mathcal{T}), Bip(\mathcal{T}), R(Bip(\mathcal{T}))$ and $B(\mathcal{T})$, are the only varieties in the lattice of subvarieties of $\mathcal{DMBL}$. 

It turns out that this conjecture is false in general, and in particular for $\mathcal{V} = \mathcal{DML}$, whence the lattice of subvarieties of $\mathcal{DMBL}$ does not admit this direct decomposition. Observe that, if $\mathcal{BA}$ is the variety of Boolean algebras, the Dudek-Graczynska result implies that the pair $\langle \mathcal{BA}, \mathcal{RISL}\rangle$ is a splitting pair in the lattice of subvarieties of $\mathcal{RDMBL} = R(\mathcal{DML})$. On the other hand, the pair $\langle \mathcal{BA}, \mathcal{ISL}\rangle$ is \emph{not} a splitting pair in the lattice of subvarieties of $\mathcal{DMBL} = B(\mathcal{DML})$, as the first item of the following Fact demonstrates. Indeed, let $\A_5 := \DPL(F)$, where $F: \mathbf{IS}_3 \to \mathcal{DL}$ is such that $F(\mathbf{i}) = F(\lnot \mathbf{i}) = \mathbf{D}_2$, while $F(\mathbf{j}) = \mathbf{D}_1$. For ease of reference, call $a$ (resp., $b)$ the top (resp. bottom) element of $F(\mathbf{i})$, and $u$ the unique element of $F(\mathbf{j})$.\footnote{$\A_5$ instantiates a construction which is related, but not identical, to the \emph{$0$-direct union} of semirings \cite{crvdol}. If we view $\mathbf{D}_2$ as a semiring, the $0$-direct union $I_0^\ast (\mathbf{D}_2)$ satisfies e.g. $\lnot (x \lor y) \approx \lnot x \lor \lnot y$ instead of the appropriate De Morgan identity.}

\begin{fact}
The following relations obtain between the varieties detailed below:
\begin{enumerate}
    \item $\mathcal{BA} \nsubseteq V(\A_5) \nsubseteq \mathcal{ISL}$.  
    \item $\mathcal{BISL} \subset V(\A_5) \subset Bip(\mathcal{BA})$.
\end{enumerate}
\end{fact}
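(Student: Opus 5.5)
The plan is to reduce everything to two concrete facts about $\A_5$: it is a homomorphic image-cum-subalgebra construction built from $\mathbf{B}_2$ and $\mathbf{IS}_3$, and it satisfies one particular identity that $\mathbf{B}_2$ does not. Throughout I would use only the explicit description of $\A_5$ as $\DPL(F)$ over $\mathbf{IS}_3$. Its universe is $\{a,b,\lnot a,\lnot b,u\}$. Every binary operation involving elements of both $F(\mathbf{i})$ and $F(\lnot\mathbf{i})$, or involving $u$, lands in the trivial fibre $F(\mathbf{j})$, and so yields $u$.

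For the non-inclusions in item (1), I would first consider the identity $x \land \lnot x \approx x \lor \lnot x$.
\begin{itemize}
\item In $\A_5$ it holds. For $x \in F(\mathbf{i}) \cup F(\lnot\mathbf{i})$, both sides lie in $F(\mathbf{i}\lor\lnot\mathbf{i}) = F(\mathbf{j})$, so both equal $u$. For $x = u$, both sides equal $u$.
\item In $\mathbf{B}_2$ it fails, since $\f \ne \t$.
\end{itemize}
Hence $\mathbf{B}_2 \notin V(\A_5)$, i.e.\ $\mathcal{BA} \nsubseteq V(\A_5)$. For the other non-inclusion, $\A_5$ violates $x \land y \approx x \lor y$, because $a \land b = b \neq a = a \lor b$ inside the fibre $\mathbf{D}_2$. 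So $V(\A_5) \nsubseteq \mathcal{ISL}$.

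For item (2), the inclusion $\mathcal{BISL} \subseteq V(\A_5)$ follows by exhibiting $\mathbf{IS}_3$ as a quotient of $\A_5$. The partition of a De Morgan-P\l onka sum into its fibres is a congruence, since operations and negation act on indices through $\mathbf{I}$. The corresponding quotient of $\A_5$ is the index algebra $\mathbf{IS}_3$, viewed in type $\langle 2,2,1\rangle$. Strictness follows from item (1), because $\A_5 \notin \mathcal{ISL} \supseteq \mathcal{BISL}$.

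For $V(\A_5) \subseteq Bip(\mathcal{BA})$, I would not invoke Theorem \ref{pagnotta}. Its hypothesis that each $\flat F(i)$ lies in $V(\mathbf{B}_2)$ fails here: $\flat\mathbf{D}_2 \cong \mathbf{DM}_4$. Instead I would use the description $Bip(\mathcal{BA}) = V(\mathbf{B}_2,\mathbf{IS}_3)$ and embed $\A_5$ into a quotient of $\mathbf{B}_2 \times \mathbf{IS}_3$.
\begin{itemize}
\item In $\mathbf{B}_2 \times \mathbf{IS}_3$, the set $J$ of pairs with second coordinate $\mathbf{j}$ is closed under $\lnot$. It also absorbs $\land$ and $\lor$ with arbitrary elements, since $\mathbf{j}$ is absorbing in $\mathbf{IS}_3$. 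So the equivalence collapsing $J$ to a point, and identity elsewhere, is a congruence.
\item In the quotient, I would map $a \mapsto (\t,\mathbf{i})$, $b \mapsto (\f,\mathbf{i})$, $\lnot a \mapsto (\f,\lnot\mathbf{i})$, $\lnot b \mapsto (\t,\lnot\mathbf{i})$, and $u \mapsto J$.
\end{itemize}
The main, though routine, obstacle is checking that this map is a homomorphism. Negation is respected by construction. Within the fibre $F(\mathbf{i})$, the operations agree with those of $\mathbf{B}_2$ on first coordinates. Within $F(\lnot\mathbf{i}) = F(\mathbf{i})^d$, I would use the De Morgan laws. For instance, $\lnot a \land \lnot b = \lnot(a \lor b) = \lnot a$ in $\A_5$, and correspondingly $(\f,\lnot\mathbf{i}) \land (\t,\lnot\mathbf{i}) = (\f,\lnot\mathbf{i})$. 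The orientation of the dual fibre has to be fixed exactly as in this example. All mixed operations go to $u$ on one side and into $J$ on the other.

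Finally, strictness of $V(\A_5) \subset Bip(\mathcal{BA})$ follows from item (1): $\mathbf{B}_2 \in Bip(\mathcal{BA})$, whereas $\mathbf{B}_2 \notin V(\A_5)$ by the identity $x \land \lnot x \approx x \lor \lnot x$.
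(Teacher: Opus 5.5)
Your proposal is correct and follows essentially the same route as the paper. Item (1) and both strictness claims rest on separating identities (you use $x \land \lnot x \approx x \lor \lnot x$, the paper the variant $x \land \lnot x \approx y \lor \lnot y$, plus $x \land y \approx x \lor y$); the inclusions rest on $\mathbf{IS}_3 \in HS(\A_5)$ and on $\A_5$ being a quotient of $\B_2 \times \mathbf{IS}_3$, with $Bip(\mathcal{BA}) = V(\B_2,\mathbf{IS}_3)$. The differences are cosmetic: you obtain $\mathbf{IS}_3$ as the fibre quotient rather than as the subalgebra $\{a,\lnot a,u\}$, and you check directly the quotient representation that the paper cites from the literature (your map is in fact an isomorphism onto the $5$-element quotient); like you, the paper uses only the part of Theorem \ref{pagnotta} identifying $Bip(\mathcal{BA})$ with $V(\B_2,\mathbf{IS}_3)$.
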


\begin{proof}
    (1) Observe that the identity $x \land \lnot x \approx y \lor \lnot y$ holds in $\A_5$, while if fails in $\mathcal{BA}$. By the same token, the identity $x \land y \approx x \lor y$ holds in $\mathcal{ISL}$ and fails in $\A_5$, because $a \land b = b \neq a = a \lor b$.

    (2) Recall from Proposition \ref{prop:characterization-IS3} that $\mathcal{BISL}=V(\mathbf{IS_3})$, and observe that $\mathbf{IS_3}$ is a subalgebra (actually, a retract) of $\A_5$. By Theorem \ref{pagnotta}, $Bip(\mathcal{BA})$ is generated by $\B_2$ and $\mathbf{IS_3}$, and by the results in \cite{PaoliSzmucZirattu}, $\A_5$ itself is a quotient of $\B_2 \times \mathbf{IS_3}$. By the proof of Item (1), we have both that $\A_5 \notin \mathcal{BISL}$ and that $\A_5$ satisfies the identity $x \land \lnot x \approx y \lor \lnot y$, which fails in $Bip(\mathcal{BA})$. So the inclusions are proper.
\end{proof}

Given a strongly irregular variety $\mathcal{V}$ of dualised type $\mathcal{L}$, we let $Bip^-(\mathcal{V})$ be the variety that satisfies all and only the bipolarly balanced identities that are either valid in $\mathcal{V}$ or bipolar, $R(Bip^-(\mathcal{V}))$ be the variety that satisfies all and only the regular bipolarly balanced identities that are either valid in $\mathcal{V}$ or bipolar, and $B^-(\mathcal{V})$ be the variety that satisfies all and only the balanced regular identities that are either valid in $\mathcal{V}$ or bipolar. 

\begin{theorem}
The following relations obtain between the varieties detailed below:
\begin{enumerate}
        \item $Bip^-(\mathcal{DML}) = V(\A_5)$;
        \item $R(Bip^-(\mathcal{DML})) = V(\A_5, \mathbf{IS}_2)$;
        \item $B^-(\mathcal{DML}) = V(\A_5, \mathbf{IS}_4)$.
    \end{enumerate}
\end{theorem}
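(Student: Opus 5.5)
The plan is to prove (1) directly and then obtain (2) and (3) by combining (1) with Fact \ref{fact:characterization-IS2}, Fact \ref{fact:characterization-IS4} and Corollary \ref{stritto}. Throughout, recall that a variety generated by a set of algebras satisfies exactly the identities valid in every generator. So in each item it suffices to show that the set of identities defining the left-hand variety coincides with the set of identities valid in all the listed generators.

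For (1), the first task is to show that $\A_5$ satisfies every bipolar identity and every balanced regular identity valid in $\mathcal{DML}$.
\begin{itemize}
\item \emph{Balanced regular identities valid in $\mathcal{DML}$.} The fibres of $\A_5$ are distributive lattices. By Theorem \ref{allagamento} (item (4)), $\A_5$ lies in $Bip(\mathcal{DML})$, so it satisfies these identities.
\item \emph{Bipolar identities.} Mimic the bipolar case of the proof of Theorem \ref{allagamento}. Since the index algebra is $\mathbf{IS}_3$, any valuation $h$ sends a bipolar formula into the fibre $F(\mathbf{i}\vee\lnot\mathbf{i}\vee\cdots)=F(\mathbf{j})$. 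This fibre is trivial, so $h(\varphi)=u=h(\psi)$.
\end{itemize}
Conversely, suppose $\A_5\vDash\varphi\approx\psi$. Since $\mathbf{IS}_3$ is a subalgebra of $\A_5$, Proposition \ref{prop:characterization-IS3} shows that the identity is bipolarly balanced. If it is bipolar we are done. Otherwise it is balanced regular, and we must show it holds in $\mathcal{DML}$.

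The main obstacle is this last step. The difficulty is that $\mathbf{DM}_4$ is not a subalgebra of $\A_5$: the only fixpoint fibre is trivial. I would argue in three steps.
\begin{itemize}
\item Note that $\A_5$ contains the subalgebra $F(\mathbf{i})\cup F(\lnot\mathbf{i})\cup F(\mathbf{j})$, which is the whole algebra.
\item Given a valuation $g$ into $\mathbf{B}_2$ witnessing failure, send each variable $x$ to the element of $F(\mathbf{i})$ corresponding to $g(x)$. Then $\lnot x$ lands in $F(\lnot\mathbf{i})$. For a non-bipolar formula $\delta$, with variables occurring only in a single polarity pattern, $h(\delta)$ stays in $F(\mathbf{i})$ or $F(\lnot \mathbf{i})$ according to the polarity, and computes $g(\delta)$ there via the bilateralisation $\flat\mathbf{D}_2\cong$ a twist of $\mathbf{D}_2$.
\item Check that for a balanced, non-bipolar identity both sides share the same polarity structure. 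A failure in $\mathbf{B}_2$ should then transfer to a failure in $\A_5$.
\end{itemize}
A subtlety arises because balanced regular non-bipolar identities may still contain variables occurring in both polarities; in that case both sides are bipolar on the same variable, which reduces to the bipolar case. So the real work is showing that any balanced regular identity failing in $\mathcal{DML}$ but non-bipolar fails in $\A_5$. Here I would use that, for formulas without mixed-polarity variables, De Morgan lattice identities reduce to distributive lattice identities. These already fail in $\mathbf{D}_2$, and $\mathbf{D}_2$ sits inside $F(\mathbf{i})$.

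For (2), $V(\A_5,\mathbf{IS}_2)$ satisfies exactly the identities valid in $\A_5$ (characterised by (1)) that are also regular (Fact \ref{fact:characterization-IS2}). Intersecting gives precisely the regular bipolarly balanced identities that are valid in $\mathcal{DML}$ or bipolar.

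For (3), use Fact \ref{fact:characterization-IS4} in the same way. A bipolar identity valid in $\mathbf{IS}_4$ is balanced regular, so the intersection consists exactly of the balanced regular identities that are valid in $\mathcal{DML}$ or bipolar. This is the defining set of $B^-(\mathcal{DML})$.
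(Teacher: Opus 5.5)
\paragraph{Overall.} Most of your architecture is sound, and one part is cleaner than the paper's. The inclusion $V(\A_5)\subseteq Bip^-(\mathcal{DML})$ is correct: you get it from Theorem \ref{allagamento} together with the remark that every bipolar term is sent into the trivial top fibre $F(\mathbf{j})$. The paper instead takes a failing valuation and case-splits on where $h(\varphi)$, $h(\psi)$ land. Using the subalgebra $\mathbf{IS}_3$ to get bipolar balance is also correct. So are your intersection-of-theories arguments for (2) and (3).

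\paragraph{The gap.} It lies in the one step with real content: a non-bipolar balanced regular identity valid in $\A_5$ must be valid in $\mathcal{DML}$. The explicit valuation you propose, sending every variable into $F(\mathbf{i})$, does not work. Suppose a term contains a positive-only variable $x$ and a negative-only variable $y$. Then $h(x)\in F(\mathbf{i})$ and $\lnot h(y)\in F(\lnot\mathbf{i})$, so the term lands in $F(\mathbf{i}\lor\lnot\mathbf{i})=F(\mathbf{j})=\{u\}$. For example, $x\land\lnot y\approx x\lor\lnot y$ is balanced regular and non-bipolar, and it fails in $\mathbf{B}_2$. Yet your valuation sends both sides to $u$. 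Your claim that $h(\delta)$ ``stays in $F(\mathbf{i})$ or $F(\lnot\mathbf{i})$ according to the polarity'' holds only when all variables of $\delta$ share one polarity. Starting from a failure in $\mathbf{B}_2$ is also unjustified at that point. The hypothesis gives a failure in $\mathbf{DM}_4$, and this transfers to $\mathbf{B}_2$ only after the reduction to lattice identities. Finally, your ``subtlety'' paragraph is vacuous. In a balanced identity, a variable mixed on one side is mixed on the other, which makes the identity bipolar. So in the non-bipolar case neither side has any mixed variable, and that is exactly what makes the reduction possible.

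\paragraph{The fix.} Make precise the reduction your last paragraph gestures at. Since $Var^{+}(\varphi)=Var^{+}(\psi)$ and $Var^{-}(\varphi)=Var^{-}(\psi)$ are disjoint, let $\sigma$ substitute $\lnot y$ for each $y\in Var^{-}(\varphi)$. Modulo the De Morgan and double-negation laws, $\sigma(\varphi)\approx\sigma(\psi)$ is negation-free, and $\sigma$ is self-inverse. Now argue as follows:
\begin{itemize}
\item If $\A_5\vDash\varphi\approx\psi$, then $\A_5$ satisfies this lattice identity.
\item Hence so does the $\langle\land,\lor\rangle$-subreduct $F(\mathbf{i})\cong\mathbf{D}_2$.
\item Hence the lattice identity holds in $\mathcal{DL}$, and so in the lattice reducts of all De Morgan lattices.
\item Applying $\sigma$ again gives $\mathcal{DML}\vDash\varphi\approx\psi$.
\end{itemize}
Semantically, this means sending positive variables into $F(\mathbf{i})$ and negative variables $y$ into $F(\lnot\mathbf{i})$, with $h(y)=\lnot g(y)$ for a $\mathbf{D}_2$-counterexample $g$. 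Then every literal lies in $F(\mathbf{i})$, and the whole computation stays inside $F(\mathbf{i})$. This is what the paper does in its case (ii) and in its $h(\varphi)=a$, $h(\psi)=b$ case.
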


\begin{proof}
   (1) For the left-to-right inclusion, suppose that the identity $\varphi \approx \psi$ is  either (i) not bipolarly balanced, or (ii) invalid in $\mathbf{DM}_4$ and not bipolar. In the former case, by Proposition \ref{prop:characterization-IS3} it fails in $\mathbf{IS}_3$ and hence in $\A_5$, which contains $\mathbf{IS}_3$ as a subalgebra. In the latter, suppose w.l.g. that $Var^+(\varphi) \cap Var^-(\varphi) = \emptyset$. So there will be $h \in Hom(\mathbf{Fm}(\langle 2,2,1 \rangle), \A_5)$ such that $h(\varphi) \in \{a,b\}$. Hence, if it is possible for such an $h$ to be such that $h(\psi) \in \{\lnot a, \lnot b, u\}$, $\varphi \approx \psi$ fails in $\A_5$. If not, then for all $h \in Hom(\mathbf{Fm}(\langle 2,2,1 \rangle), \A_5)$ such that $h(\varphi) \in \{a,b\}$, we must have that $h(\psi) \in \{a,b\}$. But this entails that also $Var^+(\psi) \cap Var^-(\psi) = \emptyset$. So, essentially, $\varphi$ and $\psi$ are equivalent to identities in the language of lattices. Now, given that $\varphi \approx \psi$ fails in $\mathbf{DM}_4$ and that it can be viewed as an involution-free identity, then it must fail in distributive lattices. Whence, since $\mathbf{D}_2$ generates the variety of distributive lattices and is a $\langle \land,\lor \rangle$-subreduct of $\A_5$, we know that $\varphi \approx \psi$ fails in $\A_5$ too.

   For the converse inclusion, suppose that $\varphi \approx \psi$ is not valid in $\A_5$, i.e. there exists $h \in Hom(\mathbf{Fm}(\langle 2,2,1 \rangle), \A_5)$ such that $h(\varphi) \neq h(\psi)$. If either $h(\varphi),h(\psi) \in \{u,a,\lnot a\}$ or $h(\varphi),h(\psi) \in \{u,b,\lnot b\}$, then $\varphi \approx \psi$ is not valid in $\mathbf{IS}_3$ and hence, by Proposition \ref{prop:characterization-IS3}, it is not bipolarly balanced. Alternatively, were we to suppose that (w.l.g.) $h(\varphi)$ belongs to one of the preceding sets, while $h(\psi)$ does not, the following ensues. We could assume that $h(\varphi) = a, h(\psi) = b$ (noting that the other cases are similar). This means that $Var^+(\varphi) \cap Var^-(\varphi) = \emptyset$ and $Var^+(\psi) \cap Var^-(\psi) = \emptyset$. Whence, in particular, $\varphi \approx \psi$ is not bipolar. Arguing as in the previous direction, we can obtain a counterexample to its validity in distributive lattices and, thus, in $\mathbf{DM}_4$.

   (2) From left to right, suppose that the identity $\varphi \approx \psi$ is either (i) not regular bipolarly balanced, or (ii) invalid in $\mathbf{DM}_4$ and not bipolar. In case (ii), by the previous item, we know it fails in $\A_5$. In case (i), it is either not bipolarly balanced, or irregular. If the former, it fails in $\A_5$ by the previous item again. If the latter, it fails in $\mathbf{IS}_2$ by Fact \ref{fact:characterization-IS2}. Conversely, suppose that $\varphi \approx \psi$ has a counterexample either in $\A_5$ or in $\mathbf{IS}_2$. If the former, then by the previous item it is either not bipolarly balanced (thus, in particular, not regular bipolarly balanced), or both invalid in $\mathbf{DM}_4$ and not bipolar. If the latter, then by Fact \ref{fact:characterization-IS2} it is irregular, hence not regular bipolarly balanced.

   (3) Similar, using Fact \ref{fact:characterization-IS4}.
\end{proof}

Clearly, the preceding proof does not depend on any particular property of De Morgan lattices, other than that they satisfy exactly the negation-free identities that hold in distributive lattices. The same, however, holds for Kleene lattices and Boolean algebras. Therefore, we have that:

\begin{corollary}
The following relations obtain between the varieties detailed below:
\begin{enumerate}
        \item $Bip^-(\mathcal{DML}) = Bip^-(\mathcal{KL}) = Bip^-(\mathcal{BA})$.
        \item $R(Bip^-(\mathcal{DML})) = R(Bip^-(\mathcal{KL})) = R(Bip^-(\mathcal{BA}))$.
        \item $B^-(\mathcal{DML}) = B^-(\mathcal{KL}) = B^-(\mathcal{BA})$.
    \end{enumerate}
\end{corollary}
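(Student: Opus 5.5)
The plan is to reduce the corollary to the preceding theorem by showing that the identity classes defining $Bip^-(\mathcal{V})$, $R(Bip^-(\mathcal{V}))$ and $B^-(\mathcal{V})$ do not depend on which of $\mathcal{DML}$, $\mathcal{KL}$, $\mathcal{BA}$ is taken as $\mathcal{V}$.

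Fix one of the three items, say (1). By definition, $Bip^-(\mathcal{V})$ is axiomatised by the bipolarly balanced identities that are either valid in $\mathcal{V}$ or bipolar. Since a variety is determined by its equational theory, it suffices to show the following. For a bipolarly balanced identity $\varphi \approx \psi$ that is \emph{not} bipolar, validity in $\mathcal{DML}$, in $\mathcal{KL}$ and in $\mathcal{BA}$ are equivalent. Bipolar identities are admitted in every case, so they never distinguish the three varieties. A non-bipolar bipolarly balanced identity is balanced regular.

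The proof of the preceding theorem isolates the relevant fact. Consider an identity that is not bipolar, w.l.g. with $Var^+(\varphi) \cap Var^-(\varphi) = \emptyset$, and that holds in $\mathbf{IS}_3$ (as any bipolarly balanced one does, by Proposition \ref{prop:characterization-IS3}). Then $\psi$ is also not bipolar, and both sides can be treated as $\lnot$-free lattice terms. To do this, substitute $\lnot x$ by a fresh variable $x'$ for each $x \in Var^-$, after pushing negations to the variables via the De Morgan laws and $\lnot\lnot x \approx x$. This substitution is legitimate in any of $\mathcal{DML}$, $\mathcal{KL}$, $\mathcal{BA}$, because the variables occurring negatively and positively are disjoint. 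Hence validity of $\varphi \approx \psi$ in $\mathcal{V}$ reduces to validity of the corresponding lattice identity in the $\langle\land,\lor\rangle$-reducts of $\mathcal{V}$. For each of the three varieties these reducts generate exactly $\mathcal{DL}$, since each contains $\mathbf{B}_2$, whose reduct is $\mathbf{D}_2$, and all are distributive. So the three validity conditions coincide, and item (1) follows.

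Items (2) and (3) follow by the same argument, intersecting additionally with regularity, respectively restricting to balanced regular identities. Alternatively, one can rerun the proof of the preceding theorem verbatim with $\mathbf{K}_3$ or $\mathbf{B}_2$ in place of $\mathbf{DM}_4$, obtaining $V(\A_5)$, $V(\A_5,\mathbf{IS}_2)$ and $V(\A_5,\mathbf{IS}_4)$ in each case.

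The main obstacle is justifying rigorously that a non-bipolar identity holding in the relevant variety is equivalent to a negation-free one. In particular, one must check that the fresh-variable substitution is reversible, i.e. that any assignment to $x$ and $x'$ in a distributive lattice is realised by some assignment in $\mathbf{B}_2$ or $\mathbf{K}_3$. This holds because $x$ and $x'$ never occur on the same side together in conflicting polarity, and because $\mathbf{D}_2$ alone already refutes every non-distributive-lattice identity.
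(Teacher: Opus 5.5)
Your argument is correct. It rests on the same key observation as the paper, but you apply it at a different level.

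The paper gets the corollary by remarking that the proof of the preceding theorem uses nothing about $\mathcal{DML}$ except that its negation-free identities are exactly those of $\mathcal{DL}$. Rerunning that proof for $\mathcal{KL}$ and $\mathcal{BA}$ then shows that the three varieties in each item all equal $V(\A_5)$, $V(\A_5,\mathbf{IS}_2)$, $V(\A_5,\mathbf{IS}_4)$ respectively.

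Your main route bypasses $\A_5$ and shows directly that the defining sets of identities coincide. Bipolar identities are admitted whatever $\mathcal{V}$ is. A non-bipolar admissible identity is balanced regular, so $Var^+$ and $Var^-$ are disjoint across the whole identity. After pushing negations to atoms, the substitution $\lnot x \mapsto x'$ then turns $\mathcal{V}$-validity into $\mathcal{DL}$-validity for each of the three choices of $\mathcal{V}$. This is self-contained, and it makes rigorous the step the paper leaves at ``essentially equivalent to identities in the language of lattices''. The paper's route, in exchange, also supplies a finite generator for each variety.

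One correction to your last paragraph. Reversibility needs no variable to occur both as $x$ and as $x'$ anywhere in the identity; this follows from balance, not from a per-side condition. Realisation then happens inside the same algebra, because $\lnot$ is a bijection. For the direction from $\mathcal{V}$-validity to $\mathcal{DL}$-validity, you only need assignments into $\mathbf{D}_2$ to be realised by assignments into $\mathbf{B}_2$. You do not need arbitrary distributive-lattice assignments to be realised in $\mathbf{B}_2$ or $\mathbf{K}_3$.
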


Also, observe that none of the above varieties is an extension of $\mathcal{BA}$, because they all satisfy $x \land \lnot x \approx x \lor \lnot x$.

\section{The structure of the lattice of subvarieties}\label{fireworks}

In this final section, we provide a complete description of the lattice of subvarieties of $\mathcal{DMBL}$. Recall that, by \cite[Lm. 4.4]{PaoliSzmucZirattu}, if $\A$ is a subdirectly irreducible De Morgan bisemilattice, its De Morgan-P\l onka representation has a subdirectly irreducible involutive semilattice of indices. Moreover, by \cite[Thm. 3.3]{PaoliSzmucZirattu} the algebra $\mathbf{U}$ from Theorem \ref{accozzato} generates $\mathcal{DMBL}$ as a quasivariety. Also, recall the following \textquotedblleft relativised'' version of J\'onsson's Lemma (see \cite{czeldziob}):

\begin{theorem}
    Let $\mathcal{K}$ be a finite class of finite algebras, and let $ISP(\mathcal{K})$ be a variety. Then all subdirectly irreducible members of $ISP(\mathcal{K})$ are in $IS(\mathcal{K})$.
\end{theorem}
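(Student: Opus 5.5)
The plan is to argue directly from the definitions. The statement is in fact a general fact about $ISP(\mathcal{K})$ for any class $\mathcal{K}$; the hypotheses that $\mathcal{K}$ is a finite class of finite algebras and that $ISP(\mathcal{K})$ is a variety are not needed for the inclusion itself. They matter only in how the result will be applied later, namely to $\mathcal{K}=\{\mathbf{U}\}$, which by \cite[Thm. 3.3]{PaoliSzmucZirattu} generates $\mathcal{DMBL}$ as a quasivariety.

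Let $\A$ be a subdirectly irreducible member of $ISP(\mathcal{K})$; as usual, subdirectly irreducible algebras are nontrivial, so $\A$ has a monolith $\mu \neq \Delta_A$. Since $\A \in ISP(\mathcal{K})$, there is an embedding $e\colon \A \to \prod_{j \in J} \B_j$ with every $\B_j \in \mathcal{K}$.

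For each $j$, set $\theta_j := \ker(\pi_j \circ e)$, where $\pi_j$ is the $j$-th projection. Because $e$ is injective, $\bigcap_{j \in J} \theta_j = \Delta_A$.

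The key step uses the monolith. Suppose, towards a contradiction, that $\theta_j \neq \Delta_A$ for every $j$. Every nontrivial congruence contains the monolith, so $\mu \subseteq \theta_j$ for all $j$. Hence $\mu \subseteq \bigcap_j \theta_j = \Delta_A$, which is impossible. Note that $J$ may be infinite; this is harmless, precisely because subdirect irreducibility means that $\Delta_A$ is completely meet-irreducible.

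Therefore $\theta_{j_0} = \Delta_A$ for some $j_0 \in J$. Then $\pi_{j_0} \circ e$ is an embedding of $\A$ into $\B_{j_0}$, so $\A$ is isomorphic to the subalgebra $\pi_{j_0}(e(A))$ of $\B_{j_0}$. This gives $\A \in IS(\mathcal{K})$.

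There is no real obstacle in this argument. The only point needing care is the infinite intersection, and it is handled by the monolith as above rather than by finite meet-irreducibility.
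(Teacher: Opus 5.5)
Your argument is correct. It also differs from the paper, which gives no proof of its own and simply invokes the relativised J\'onsson Lemma of Czelakowski and Dziobiak \cite{czeldziob}.

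In that framework the hypotheses do real work. The cited result places the relatively subdirectly irreducible members of the quasivariety $ISPP_U(\mathcal{K})$ generated by $\mathcal{K}$ in $ISP_U(\mathcal{K})$. Finiteness of $\mathcal{K}$ and of its members gives $P_U(\mathcal{K}) \subseteq I(\mathcal{K})$, so this quasivariety is $ISP(\mathcal{K})$ and $ISP_U(\mathcal{K}) = IS(\mathcal{K})$. The assumption that $ISP(\mathcal{K})$ is a variety then makes ``relatively subdirectly irreducible'' coincide with ``subdirectly irreducible''.

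Your direct argument bypasses all of this. The kernels of the projections intersect to $\Delta_A$, and complete meet-irreducibility of $\Delta_A$ forces one of them to equal $\Delta_A$. This shows the inclusion holds for an arbitrary class $\mathcal{K}$, with no finiteness or variety hypothesis and no ultraproducts. It is more elementary and strictly more general than the statement.

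What the cited route buys is the bridge needed in the application. The paper only knows that $\mathbf{U}$ generates $\mathcal{DMBL}$ as a \emph{quasivariety}, i.e.\ $\mathcal{DMBL} = ISPP_U(\mathbf{U})$. It is the finiteness of $\mathbf{U}$ that collapses this to $ISP(\mathbf{U})$, so that your lemma applies. You allude to the application, but it is worth saying explicitly that this is where the finiteness hypothesis earns its place.
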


As a consequence, all the subdirectly irreducible De Morgan bisemilattices must be subalgebras of $\mathbf{U}$, and there are only finitely many such algebras. In principle, however, every subset of this set could generate a distinct subvariety of $\mathcal{DMBL}$. We will see that the actual distinct subvarieties are way fewer, though. Let us see how we can prune these potential sets of generators.

First of all, the next theorem allows us to \textquotedblleft weed out\textquotedblright ~some subdirectly irreducible algebras $\A \simeq \DPL(F)$ such that $F$ has domain $\mathbf{IS}_4$, as long as we are only interested in determining the structure of our subvariety lattice.

\begin{theorem}\label{cafecito}
    Let $\A \simeq \DPL (F)$ be a subdirectly irreducible De Morgan bisemilattice, where $F$ has domain $\mathbf{IS}_4$. Then $V(\A) = V(F(\mathbf{i})^\lnot \times \mathbf{IS}_4) = V(\B,\mathbf{IS}_4)$, where $\B \in \{\mathbf{DM}_4,\mathbf{K}_3, \mathbf{B}_2, \mathbf{IS}_1 \}$.
\end{theorem}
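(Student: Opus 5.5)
We prove the two equalities by exhibiting $\A$ as a member of $V(F(\mathbf{i})^\lnot \times \mathbf{IS}_4)$, and conversely recovering both $F(\mathbf{i})^\lnot$ and $\mathbf{IS}_4$ inside $V(\A)$.

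\emph{Step 1: the generators lie in $V(\A)$.} Since $\mathbf{i}$ is the bottom of $\mathbf{IS}_4$ and satisfies $\lnot\mathbf{i}=\mathbf{i}$, the remark in Section~\ref{inquina} shows that $F(\mathbf{i})^\lnot$ is a subalgebra of $\DPL(F)$. Moreover, the map $\DPL(F)\to\mathbf{IS}_4$ sending each element of $F(k)$ to $k$ is a surjective homomorphism: it is the quotient by $\mathcal{D}$ of the a-involutive band reduct. Hence $\mathbf{IS}_4$ is a quotient of $\A$. Therefore $V(F(\mathbf{i})^\lnot,\mathbf{IS}_4)\subseteq V(\A)$, and since $V(F(\mathbf{i})^\lnot\times\mathbf{IS}_4)=V(F(\mathbf{i})^\lnot,\mathbf{IS}_4)$ (products and projections), one inclusion of the first equality holds.

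\emph{Step 2: $V(\A)\subseteq V(F(\mathbf{i})^\lnot,\mathbf{IS}_4)$.} We mimic the last paragraph of the proof of Theorem~\ref{accozzato}. Every transition map out of $F(\mathbf{i})$ is a homomorphism of distributive lattices. The maps $p_{\mathbf{i}k}$ are compatible with the dualising maps, and $n_{\mathbf{i}\mathbf{i}}$ is the negation on $F(\mathbf{i})^\lnot$. Consider the map $\A\times$-style embedding in the reverse direction: define a congruence $\theta$ on $F(\mathbf{i})^\lnot\times\mathbf{IS}_4$ identifying $\langle a,k\rangle$ and $\langle a',k\rangle$ whenever $p_{\mathbf{i}k}(a)=p_{\mathbf{i}k}(a')$, for $k\neq\mathbf{i}$. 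We then aim to show that $\langle a,k\rangle\mapsto p_{\mathbf{i}k}(a)$ is a homomorphism onto the subalgebra $\bigsqcup_k p_{\mathbf{i}k}[F(\mathbf{i})]$ of $\A$. This is a routine check from the De Morgan-P\l onka operations together with $\mathbf{Z}_2$-equivariance.

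\emph{Main obstacle: surjectivity of the transition maps.} To land on all of $\A$ we need $p_{\mathbf{i}k}$ to be onto for every $k$. Here subdirect irreducibility is used. If some fibre $F(k)$ contained elements outside the image, we expect the sub-sum generated by the image of $F(\mathbf{i})$ to be a proper subalgebra. We would also expect a nontrivial congruence that is disjoint from another one, contradicting irreducibility. A cleaner route uses the relativised J\'onsson Lemma: $\A$ is then a subalgebra of $\mathbf{U}$ over $\mathbf{IS}_4$. One inspects the finitely many subalgebras of $\mathbf{U}$ whose index semilattice is all of $\mathbf{IS}_4$, which are the s.i.\ ones by \cite[Lm. 4.4]{PaoliSzmucZirattu}. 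For each, one checks directly that the fibres over $\mathbf{j},\lnot\mathbf{j},\mathbf{k}$ are images of $F(\mathbf{i})$. We expect this case inspection to be the hardest part.

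\emph{Step 3: classify $F(\mathbf{i})^\lnot$.} Since $\A$ embeds in $\mathbf{U}$, the bottom fibre $F(\mathbf{i})^\lnot$ is a De Morgan subalgebra of $\mathbf{D}_2\times\mathbf{D}_2$ with the swap negation, i.e.\ of $\mathbf{DM}_4$. By Theorem~\ref{periscopio} it is therefore isomorphic to one of $\mathbf{DM}_4,\mathbf{K}_3,\mathbf{B}_2$ or the trivial algebra $\mathbf{IS}_1$. This gives $\B$ and the second equality.
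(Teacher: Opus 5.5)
You have the easy half right: $F(\mathbf{i})^\lnot$ is a subalgebra of $\A$ and $\mathbf{IS}_4$ is a quotient of it. Your map $\langle a,k\rangle\mapsto p_{\mathbf{i}k}(a)$ is also a genuine homomorphism from $F(\mathbf{i})^\lnot\times\mathbf{IS}_4$ onto the subalgebra $\bigsqcup_k p_{\mathbf{i}k}[F(\mathbf{i})]$.

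The gap is the step you flagged, and it cannot be closed. Subdirect irreducibility does not make the $p_{\mathbf{i}k}$ onto, and the statement fails without it. Let $\mathbf{C}=\DPL(F')$ over $\mathbf{IS}_4$ with $F'(\mathbf{i})=\{c\}$, $F'(\mathbf{j})=\mathbf{D}_2=\{0<1\}$, $F'(\lnot\mathbf{j})=\mathbf{D}_2^d$, $F'(\mathbf{k})=\{u\}$ and $p_{\mathbf{i}\mathbf{j}}(c)=1$; the remaining maps are forced. Equivalently, $\mathbf{C}$ is the subalgebra of $\mathbf{U}$ that keeps a single negation fixpoint of the bottom fibre, so it is a De Morgan bisemilattice. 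In $\mathbf{C}$:
\begin{itemize}
\item $c\land 0=0$, $c\lor 0=1$ and $c\land\lnot 0=\lnot 1$;
\item $x\land y=x\lor y=u$ for $x\in\{0,1\}$ and $y\in\{\lnot 0,\lnot 1,u\}$.
\end{itemize}
The partition with blocks $\{0,1\}$, $\{\lnot 0,\lnot 1\}$, $\{c\}$, $\{u\}$ is a congruence. Every pair of distinct elements generates a congruence containing $(0,1)$. For instance, from $(c,1)$ you get $(c\land\lnot 0,1\land\lnot 0)=(\lnot 1,u)$, then $(1,u)$ by negation, then $(1\land 0,u\land 0)=(0,u)$. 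From $(c,0)$ you get $(c\lor 0,0\lor 0)=(1,0)$, and the remaining pairs are similar. So $\mathbf{C}$ is subdirectly irreducible, its index semilattice is $\mathbf{IS}_4$, and its bottom fibre is trivial. The statement therefore predicts $V(\mathbf{C})=V(\mathbf{IS}_4)=\mathcal{ISL}$. But in $\mathbf{C}$ we have $0\land 1=0\neq 1=0\lor 1$, so $\mathbf{C}\notin\mathcal{ISL}$. No other choice of $\B$ works either, because $\mathbf{C}\models x\land\lnot x\approx x\lor\lnot x$, which fails in $\mathbf{B}_2$.

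The paper's proof breaks at the same point in a different guise: it asserts that $\DPL(F)$ is a \emph{subalgebra} of $\DPL(G)/\theta$. An embedding preserves $x\cdot y:=x\land(x\lor y)$, hence fibres. Also $p_{\mathbf{i}\mathbf{j}}(a)=a\cdot x$ for any $x\in F(\mathbf{j})$, and the transition maps of $G$ out of $\mathbf{i}$ are injective. So an embedding would force $p_{\mathbf{i}\mathbf{j}}$ to be injective, which is false already for $\mathbf{U}$.

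Your quotient map is the right replacement, and it proves the claim exactly when every $p_{\mathbf{i}k}$ is onto. Via the embedding into $\mathbf{U}$, this holds whenever $F(\mathbf{i})$ is nontrivial: it then contains $\t$ and $\f$, which are sent to $1$ and $0$. When $F(\mathbf{i})$ is trivial, the subdirectly irreducible options are $\mathbf{IS}_4$ and $\mathbf{C}$. A correct version must therefore treat $\mathbf{C}$ separately.

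This matters downstream. $\mathbf{C}$ fails $(x\land\lnot x)\land y\approx(x\land\lnot x)\lor y$ (take $x=c$, $y=0$), which holds in $\A_5$ and $\mathbf{IS}_4$. So $V(\mathbf{C})$ is not contained in $\mathcal{ISL}$, does not contain $\mathbf{B}_2$, and is not contained in $V(\A_5,\mathbf{IS}_4)$. Hence it is not one of the $23$ varieties of the final theorem.
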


\begin{proof}
    For a start, we show the first equality. By \cite[Thm. 4.1]{PaoliSzmucZirattu}, if $\A \simeq \DPL (F)$ is subdirectly irreducible and $F$ has domain $\mathbf{IS}_4$, then two cases may arise: either $\A = \mathbf{IS}_4$, or $F(\mathbf{k}) = \mathbf{D}_1$ and $F(\mathbf{j}) = F(\lnot \mathbf{j}) = \mathbf{D}_2$. On the other hand, it is readily seen that $F(\mathbf{i})^\lnot \times \mathbf{IS}_4$ is isomorphic to an algebra of the form $\DPL (G)$, where $G: \mathbf{IS}_4 \to \mathcal{DL}$ is such that $G(\mathbf{i}) = G(\mathbf{j}) = G(\mathbf{k}) = F(\mathbf{i}), G(\lnot \mathbf{j}) = F(\mathbf{i})^d$. Now, $G(\mathbf{k})$ is a \emph{sink} of $\DPL (G)$, whence, by \cite[Lm. 4.5.15]{RS}, the equivalence $\theta$ collapsing all and only members of $G(\mathbf{k})$ is a congruence on $\DPL (G)$. However, $\DPL (F)$ is a subalgebra of $\DPL (G)/\theta$. Consequently, in either of the above cases, $\A$ is an isomorphic image of a subalgebra of a quotient of an isomorphic image of $F(\mathbf{i})^\lnot \times \mathbf{IS}_4$. For the other direction, $F(\mathbf{i})^\lnot$ and $\mathbf{IS}_4$ are, respectively, a subalgebra and a quotient of $\DPL (F) \simeq \A$. Thus, the first equality is established.

    For the second equality, as observed in Subsection \ref{inquina}, $F(\mathbf{i})^\lnot$ is a De Morgan lattice, hence, by Theorem \ref{periscopio}, it is a subdirect product whose factors are in $\{\mathbf{DM}_4,\mathbf{K}_3, \mathbf{B}_2, \mathbf{IS}_1 \}$. If $F(\mathbf{i})^\lnot$ is trivial, the equality is clearly true. If it is not, we distinguish three cases: (i) $F(\mathbf{i})^\lnot \in \mathcal{BA}$, (ii) $F(\mathbf{i})^\lnot \in \mathcal{KL} \setminus \mathcal{BA}$, (ii) $F(\mathbf{i})^\lnot \in \mathcal{DML} \setminus \mathcal{KL}$. In case (ii) (the other cases are similar), the factors in the subdirect representation of $F(\mathbf{i})^\lnot$ are in $\{\mathbf{K}_3, \mathbf{B}_2, \mathbf{IS}_1 \}$. So $V(\mathbf{K}_3,\mathbf{IS}_4)$ will contain $F(\mathbf{i})^\lnot$. But $V(F(\mathbf{i})^\lnot,\mathbf{IS}_4)$ will contain $\mathbf{K}_3$, too, because $F(\mathbf{i})^\lnot \in \mathcal{KL} \setminus \mathcal{BA}$. So, our equality follows.
\end{proof}

If $\A \in \mathcal{K}$ has $\mathbf{IS}_4$ as an involutive semilattice of indices, Theorem \ref{cafecito} warrants its replacement with a subdirectly irreducible De Morgan lattice together with $\mathbf{IS}_4$. Also, by \cite[Thm. 4.1]{PaoliSzmucZirattu} there are only $10$ subdirectly irreducible De Morgan bisemilattices over $\mathbf{IS}_1,\mathbf{IS}_2,\mathbf{IS}_3$. Adding $\mathbf{IS}_4$ to them, we obtain an $11$-element set 
    \[
    \mathcal{S} = \{ \A: \A \text{ is a s.i. De Morgan-P\l onka sum over } \mathbf{IS}_1,\mathbf{IS}_2,\mathbf{IS}_3 \} \cup \{\mathbf{IS}_4\},
    \] 
so that there can be at most as many distinct subvarieties of $\mathcal{DML}$ as there are subsets of $\mathcal{S}$. The $11$ De Morgan bisemilattices in $\mathcal{S}$ are depicted in Figure \ref{fig:subdirectly}; ovals denote the fibres of their De Morgan-P\l onka representations, while in each case the transition and dualising maps are uniquely determined.

\newsavebox{\boxI}
\savebox{\boxI}{
\begin{tikzpicture}[scale=.25]
  \node (zero) at (0,-2) {$\bullet$};
\end{tikzpicture}
}

\newsavebox{\boxII}
\savebox{\boxII}{
\begin{tikzpicture}[scale=.3]
        \node (a) at (0,0) {$\bullet$};
         \node (z) at (0,-4) {$\bullet$};       
        \draw (a) -- (z);
\end{tikzpicture}
}

\newsavebox{\boxIII}
\savebox{\boxIII}{
\begin{tikzpicture}[scale=.4]
        \node (a) at (0,0) {$\bullet$};
        \node (zero) at (0,-2) {$\bullet$};
         \node (z) at (0,-4) {$\bullet$};       
        \draw (a) -- (zero) -- (z);
\end{tikzpicture}
}

\newsavebox{\boxIV}
\savebox{\boxIV}{
\begin{tikzpicture}[scale=.35]
 \node (one) at (0,-4) {$\bullet$}; 
    \node (a) at (-2,-2) {$\bullet$}; 
    \node (b) at (2,-2) {$\bullet$}; 
    \node (zero) at (0,0) {$\bullet$}; 
    \draw (zero) -- (a) -- (one) -- (b) -- (zero);
\end{tikzpicture}
}

\newsavebox{\boxV}
\savebox{\boxV}{
\begin{tikzpicture}[scale=.35]
        \node [draw, solid, circle](a) at (0,0) {$\bullet$};
         \node[draw, solid, circle] (z) at (0,-6) {$\bullet$};       
        \draw (a) -- (z);
\end{tikzpicture} 
}

\newsavebox{\boxVI}
\savebox{\boxVI}{
\begin{tikzpicture}[scale=.35]
        \node [draw, solid, circle](a) at (0,0) {$\bullet$};
         \node[draw, solid, ellipse] (z) at (0,-8) {\usebox{\boxII}};       
        \draw (a) -- (z);
\end{tikzpicture} 
}

\newsavebox{\boxVII}
\savebox{\boxVII}{
\begin{tikzpicture}[scale=.35]
        \node [draw, solid, circle](a) at (0,0) {$\bullet$};
         \node[draw, solid, ellipse] (z) at (0,-8) {\usebox{\boxIII}};       
        \draw (a) -- (z);
\end{tikzpicture} 
}

\newsavebox{\boxVIII}
\savebox{\boxVIII}{
\begin{tikzpicture}[scale=.35]
        \node [draw, solid, circle](a) at (0,0) {$\bullet$};
         \node[draw, solid, circle] (z) at (0,-8) {\usebox{\boxIV}};       
        \draw (a) -- (z);
\end{tikzpicture} 
}

\newsavebox{\boxIX}
\savebox{\boxIX}{
\begin{tikzpicture}[scale=.35]
  \node [draw, solid, circle] (one) at (0,2) {$\bullet$};
  \node [draw, solid, circle] (a) at (-2,0) {$\bullet$};
  \node [draw, solid, circle] (b) at (2,0) {$\bullet$};
  \draw (a) -- (one) -- (b) ;
\end{tikzpicture} 
}

\newsavebox{\boxX}
\savebox{\boxX}{
\begin{tikzpicture}[scale=.35]
  \node [draw, solid, circle] (one) at (0,2) {$\bullet$};
  \node [draw, solid, ellipse] (a) at (-4,-4) {\usebox{\boxII}};
  \node [draw, solid, ellipse] (b) at (4,-4) {\usebox{\boxII}};
  \draw (a) -- (one) -- (b) ;
\end{tikzpicture} 
}

\newsavebox{\boxXI}
\savebox{\boxXI}{
\begin{tikzpicture}[scale=.35]
  \node [draw, solid, circle] (one) at (0,2) {$\bullet$};
  \node [draw, solid, circle] (a) at (-2,0) {$\bullet$};
  \node [draw, solid, circle] (b) at (2,0) {$\bullet$};
  \node [draw, solid, circle] (zero) at (0,-2) {$\bullet$};
  \draw (zero) -- (a) -- (one) -- (b) -- (zero);
\end{tikzpicture} 
}

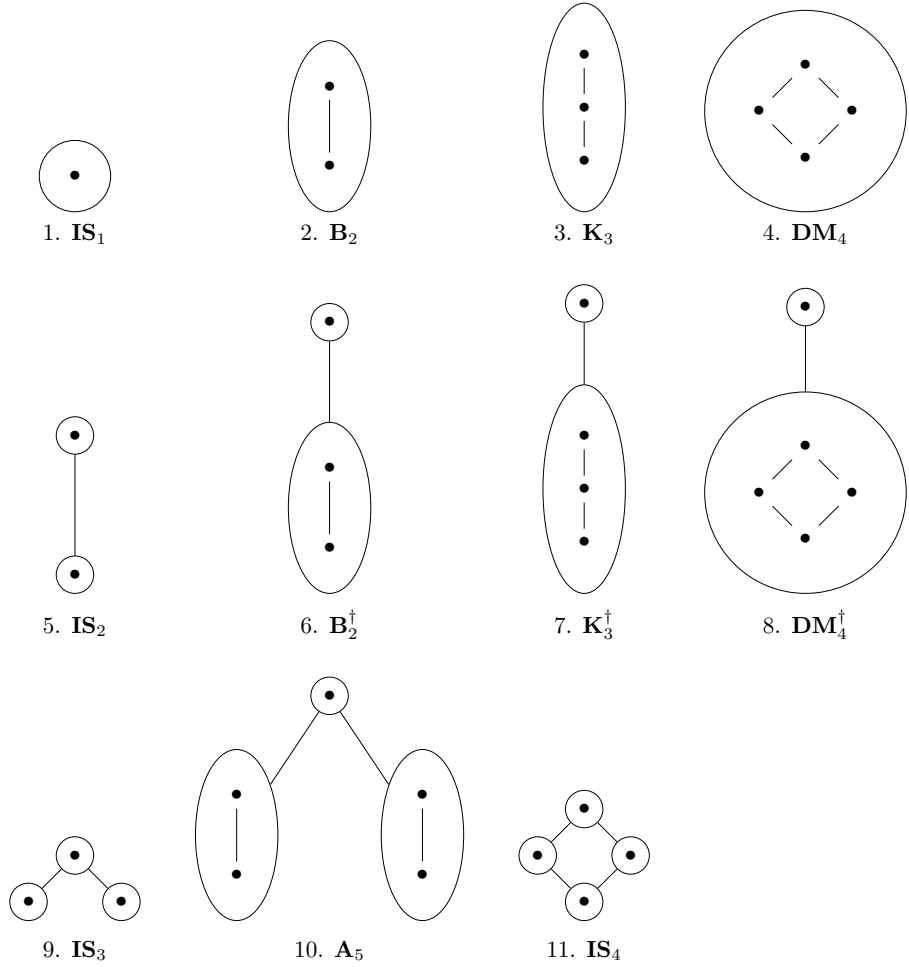
\begin{figure}[h]
        \centering

        \begin{adjustbox}{width=\linewidth, keepaspectratio}
\begin{tabular}{cccc}
\begin{tikzpicture}
  \node[draw, solid, circle] at (0,0) {\usebox{\boxI}};
\end{tikzpicture}
&
\begin{tikzpicture}
  \node[draw, solid, ellipse]  at (0,0) {\usebox{\boxII}};
\end{tikzpicture}
&
\begin{tikzpicture}
  \node[draw, solid, ellipse] at (0,0)  {\usebox{\boxIII}};
\end{tikzpicture}
&
\begin{tikzpicture}
  \node[draw, solid, circle]  at (0,0) {\usebox{\boxIV}};
\end{tikzpicture}
\\
1. ${\bf IS}_1$ & 2. ${\bf B}_2$ & 3. ${\bf K}_3$ & 4. ${\bf DM}_4$\\
 & & & \\
\begin{tikzpicture}
  \node  at (0,0) {\usebox{\boxV}};
\end{tikzpicture}
&
\begin{tikzpicture}
  \node  at (0,0) {\usebox{\boxVI}};
\end{tikzpicture}
&
\begin{tikzpicture}
  \node  at (0,0) {\usebox{\boxVII}};
\end{tikzpicture}
&
\begin{tikzpicture}
  \node  at (0,0) {\usebox{\boxVIII}};
\end{tikzpicture}
\\
5. ${\bf IS}_2$ & 6. ${\bf B}_2^{\dagger}$ & 7. ${\bf K}_3^{\dagger}$ & 8. ${\bf DM}_4^{\dagger}$\\
 & & & \\
\begin{tikzpicture}
  \node  at (0,0) {\usebox{\boxIX}};
\end{tikzpicture}
&
\begin{tikzpicture}
  \node  at (0,0) {\usebox{\boxX}};
\end{tikzpicture}
&
\begin{tikzpicture}
  \node  at (0,0) {\usebox{\boxXI}};
\end{tikzpicture}
&
\\
9. ${\bf IS}_3$ 
&
10. ${\bf A}_5$
&
11. ${\bf IS}_4$
&
\\
\end{tabular}
\end{adjustbox}

        \caption{Subdirectly irreducible De Morgan bisemilattices in $\mathcal{S}$}
        \label{fig:subdirectly}
    \end{figure}
The standard technique for carrying out this task consists in: (i) computing the preorder

\[
\preceq := \{\langle \A,\B \rangle: \A,\B \in \mathcal{S}, \A \in HSP(\B)\}
\]

and the induced equivalence relation ${\equiv}$ defined as ${\preceq \cap \preceq^{-1}}$, and (ii) determining the order-ideals of the poset $\langle \mathcal{S}/\equiv, \subseteq \rangle$. For an $11$-element set, (i) is computationally heavy. However, since $\mathcal{RDMBL} = R(\mathcal{DML})$ is the regularisation of a strongly irregular variety, its lattice of subvarieties is known to contain precisely the $4$ subvarieties of $\mathcal{DML}$ and their regularisations, all of them pairwise distinct. Therefore, it will be enough to describe the subvarieties of $\mathcal{DMBL}$ that are \emph{not} below $\mathcal{RDMBL}$. We will do this \textquotedblleft manually\textquotedblright, by enforcing some constraints that suffice to prove that the lattice of subvarieties of $\mathcal{DMBL}$ contains \emph{at most} the varieties of the form $\mathcal{V},B(\mathcal{V}),R(Bip(\mathcal{V})),Bip(\mathcal{V}),R(\mathcal{V})$ and $B^-(\mathcal{V}),R(Bip^-(\mathcal{V})),Bip^-(\mathcal{V})$, for $\mathcal{V} \in  \{ \mathcal{T}, \mathcal{BA}, \mathcal{KL}, \mathcal{DML}\}$, and then proving that these varieties are pairwise distinct.

\begin{theorem}\label{principale}
The following hold of $\mathcal{DMBL}$:
    \begin{enumerate}
        \item There are at most $23$ subvarieties of $\mathcal{DMBL}$.
        \item The lattice of subvarieties of $\mathcal{DMBL}$ is the lattice depicted in Figure \ref{fig:subvarieties}: hence, in particular, there are precisely $23$ subvarieties of $\mathcal{DMBL}$.
    \end{enumerate}
\end{theorem}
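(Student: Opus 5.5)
The plan is to classify every subvariety of $\mathcal{DMBL}$ by the set of subdirectly irreducible members of $\mathcal{S}$ that it contains, and then to collapse these sets using the known inclusions among the generators. By the relativised Jónsson Lemma and \cite[Thm. 3.3]{PaoliSzmucZirattu}, every subvariety $\mathcal{W}$ is generated by the subdirectly irreducible algebras it contains. All of these are subalgebras of $\mathbf{U}$. By Theorem \ref{cafecito}, any of them indexed by $\mathbf{IS}_4$ can be replaced by $\mathbf{IS}_4$ together with a member of $\{\mathbf{DM}_4,\mathbf{K}_3,\mathbf{B}_2,\mathbf{IS}_1\}$. Hence $\mathcal{W}=V(\mathcal{X})$ for some $\mathcal{X}\subseteq\mathcal{S}$, and I will show that only $23$ such varieties arise.

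\textbf{Step 1: the lattice coordinate.} I split $\mathcal{X}$ into two coordinates. The first is the largest $\mathcal{V}\in\{\mathcal{T},\mathcal{BA},\mathcal{KL},\mathcal{DML}\}$ such that a nontrivial De Morgan lattice occurring in $\mathcal{X}$ generates $\mathcal{V}$. Such a lattice occurs either directly, as $\mathbf{B}_2,\mathbf{K}_3,\mathbf{DM}_4$, or as a fibre of $\mathbf{B}_2^\dagger,\mathbf{K}_3^\dagger,\mathbf{DM}_4^\dagger$. By Theorem \ref{periscopio} these varieties form a chain. For the daggers I use $V(\A^\dagger)=V(\A,\mathbf{IS}_2)$, given by Theorem \ref{servosterzo} and its analogue for $\mathbf{B}_2,\mathbf{K}_3$. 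Thus each dagger can be replaced by its fibre plus $\mathbf{IS}_2$.

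\textbf{Step 2: the index coordinate.} The second coordinate is the set of index algebras among $\mathbf{IS}_2,\mathbf{IS}_3,\mathbf{IS}_4,\mathbf{A}_5$. I need the following facts.
\begin{enumerate}
\item $\mathbf{IS}_2$ and $\mathbf{IS}_3$ are subalgebras of $\mathbf{IS}_4$, on $\{\mathbf{i},\mathbf{k}\}$ and $\{\mathbf{j},\lnot\mathbf{j},\mathbf{k}\}$ respectively.
\item $\mathbf{IS}_3$ is a subalgebra of $\mathbf{A}_5$.
\item $\mathbf{A}_5$ is a quotient of $\mathbf{B}_2\times\mathbf{IS}_3$, so $\mathbf{A}_5$ becomes redundant as soon as the lattice coordinate is at least $\mathcal{BA}$ and $\mathbf{IS}_3$ is present.
\end{enumerate}
Consequently $\mathbf{A}_5$ matters only when the lattice coordinate is $\mathcal{T}$, and there it is equivalent to adding $\mathbf{IS}_3$ "together with lattice content".

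The index part, modulo these relations, then takes one of the values $\emptyset$, $\{\mathbf{IS}_2\}$, $\{\mathbf{IS}_3\}$, $\{\mathbf{IS}_2,\mathbf{IS}_3\}$, $\{\mathbf{IS}_4\}$. The theorems of Section \ref{morgana} (Theorems \ref{accozzato}--\ref{mareggiata} and their $\mathbf{B}_2,\mathbf{K}_3$ analogues) identify the generated varieties as $\mathcal{V},R(\mathcal{V}),Bip(\mathcal{V}),R(Bip(\mathcal{V})),B(\mathcal{V})$, which gives $4\times5=20$ varieties. The case of lattice coordinate $\mathcal{T}$ with $\mathbf{A}_5$ present yields the three further varieties $V(\mathbf{A}_5)$, $V(\mathbf{A}_5,\mathbf{IS}_2)$ and $V(\mathbf{A}_5,\mathbf{IS}_4)$. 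By the last theorem of Section \ref{morgana} these are $Bip^-,R(Bip^-),B^-$. The case with $\mathbf{A}_5$, $\mathbf{IS}_3$ and $\mathbf{IS}_2$ collapses into $V(\mathbf{A}_5,\mathbf{IS}_2)$, and the case with $\mathbf{A}_5$ and $\mathbf{IS}_4$ into $V(\mathbf{A}_5,\mathbf{IS}_4)$. This gives at most $23$ varieties and proves (1).

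\textbf{Step 3: distinctness and the order.} For (2) I show the $23$ varieties are pairwise distinct by exhibiting separating identities, using the syntactic characterisations (Facts \ref{fact:characterization-IS2}, \ref{fact:characterization-IS4}, Proposition \ref{prop:characterization-IS3}, Corollary \ref{stritto}).
\begin{enumerate}
\item The index coordinate is detected by the identities defining $\mathcal{RISL},\mathcal{BISL},\mathcal{RBISL}$, in the form R-Abs, B-Abs and RB-Abs, together with $x\land y\approx x\lor y$.
\item The lattice coordinate is detected by the Kleene identity, the Boolean identity $x\land\lnot x\land y\approx x\land\lnot x$ relativised suitably, and absorption.
\item The three minus-varieties are separated from the $\mathcal{T}$-row by $x\land y\approx x\lor y$. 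They are separated from the $\mathcal{BA}$-row by $x\land\lnot x\approx y\lor\lnot y$.
\end{enumerate}
The order in Figure \ref{fig:subvarieties} is then read off from the inclusions among generating sets.

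I expect the main obstacle to be Step 2. Its delicate points are checking that no unexpected collapse or extra variety arises among sets involving $\mathbf{A}_5$ (for instance, that $\mathbf{IS}_2\notin V(\mathbf{A}_5)$ and $\mathbf{A}_5\notin V(\mathbf{IS}_4)$), and that $\mathbf{A}_5$ is genuinely absorbed once $\mathbf{B}_2$ is present. To settle these I would use the explicit identities satisfied by $\mathbf{A}_5$: it satisfies B-Abs, so $\mathbf{IS}_2$ and $\mathbf{IS}_4$, which fail B-Abs, are not in $V(\mathbf{A}_5)$. It fails $x\land y\approx x\lor y$, so $\mathbf{A}_5\notin V(\mathbf{IS}_4)$. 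It is a quotient of $\mathbf{B}_2\times\mathbf{IS}_3$ \cite{PaoliSzmucZirattu}, so it is absorbed once $\mathbf{B}_2$ and $\mathbf{IS}_3$ are present.
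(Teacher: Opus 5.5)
Your proposal is correct and is essentially the paper's argument. The steps match: reduce every subvariety to $V(\mathcal{X})$ with $\mathcal{X}\subseteq\mathcal{S}$ via the relativised J\'onsson Lemma and Theorem \ref{cafecito}; prune using the same subalgebra/quotient relations that the paper encodes as constraints (c)--(o); and separate the survivors by identities. The only structural difference is the count. Your $4\cdot 5+3$ lattice-by-index count is a by-hand version of what the paper does by computer (the $15$ generating sets not below $\mathcal{RDMBL}$) plus the $8$ varieties below $\mathcal{RDMBL}$. For your ``relativised suitably'' step, the paper's table gives explicit balanced or bipolar witnesses.

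One witness is wrong as stated. The identity $x\land\lnot x\approx y\lor\lnot y$ becomes $x\approx y$ in $\mathbf{IS}_2$, so it fails in $V(\mathbf{A}_5,\mathbf{IS}_2)$ and $V(\mathbf{A}_5,\mathbf{IS}_4)$. It therefore separates only $V(\mathbf{A}_5)$ from the varieties containing $\mathcal{BA}$. Use $x\land\lnot x\approx x\lor\lnot x$ instead, as the paper does: it holds in $\mathbf{A}_5$, $\mathbf{IS}_2$ and $\mathbf{IS}_4$, and fails in $\mathbf{B}_2$.
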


\begin{figure}[h]
    \centering

    \resizebox{\linewidth}{!}{
    \begin{tikzpicture}[font=\tiny, inner sep=0pt] 
        
        \def\squish{0.7}

        \coordinate (V01) at (0,0);
        \coordinate (V02) at (\squish, 1);  
        \coordinate (V03) at (0, 2);
        \coordinate (V0T) at (0, 3);        
        \coordinate (V04) at (-\squish, 1); 

        \coordinate (V11) at (3.5, -1);
        \coordinate (V12) at (3.5+\squish, 0);
        \coordinate (V13) at (3.5, 1);
        \coordinate (V1T) at (3.5, 2);        
        \coordinate (V14) at (3.5-\squish, 0);

        \coordinate (V21) at (7, -2);
        \coordinate (V22) at (7+\squish, -1);
        \coordinate (V23) at (7, 0);
        \coordinate (V2T) at (7, 1);        
        \coordinate (V24) at (7-\squish, -1);

        \coordinate (V31) at (10.5, -3);
        \coordinate (V32) at (10.5+\squish, -2);
        \coordinate (V33) at (10.5, -1);
        \coordinate (V3T) at (10.5, 0);        
        \coordinate (V34) at (10.5-\squish, -2);

        \coordinate (VE1) at (4, -3.5);        
        \coordinate (VE2) at (4, -2.25);
        \coordinate (VE3) at (4, -1.5);

        \draw (VE1) -- (V24);
        \draw (VE2) -- (V23);
        \draw (VE3) -- (V2T);
        \draw (V34) -- (VE1) -- (VE2) -- (VE3);

        \draw (V01) -- (V11) -- (V21) -- (V31);
        \draw (V02) -- (V12) -- (V22) -- (V32);
        \draw (V03) -- (V13) -- (V23) -- (V33);
        \draw (V04) -- (V14) -- (V24) -- (V34);
        \draw (V0T) -- (V1T) -- (V2T) -- (V3T);

        \foreach \i in {0,1,2,3} {
            \draw (V\i1) -- (V\i2) -- (V\i3) -- (V\i4) -- cycle;
            \draw (V\i3) -- (V\i T);
        }

        \node[fill=white, inner sep=2pt] at (V01) {$\mathcal{DML}$};
        \node[fill=white, inner sep=2pt] at (V02) {$R(\mathcal{DML})$};
        \node[fill=white, inner sep=2pt] at (V03) {$R(Bip(\mathcal{DML}))$};
        \node[fill=white, inner sep=2pt] at (V04) {$Bip(\mathcal{DML})$};
        \node[fill=white, inner sep=2pt] at (V0T) {$B(\mathcal{DML})$};        

        \node[fill=white, inner sep=2pt] at (V11) {$\mathcal{KL}$};
        \node[fill=white, inner sep=2pt] at (V12) {$R(\mathcal{KL})$};
        \node[fill=white, inner sep=2pt] at (V13) {$R(Bip(\mathcal{KL}))$};
        \node[fill=white, inner sep=2pt] at (V14) {$Bip(\mathcal{KL})$};
        \node[fill=white, inner sep=2pt] at (V1T) {$B(\mathcal{KL})$};      

        \node[fill=white, inner sep=2pt] at (V21) {$\mathcal{BA}$};
        \node[fill=white, inner sep=2pt] at (V22) {$R(\mathcal{BA})$};
        \node[fill=white, inner sep=2pt] at (V23) {$R(Bip(\mathcal{BA}))$};
        \node[fill=white, inner sep=2pt] at (V24) {$Bip(\mathcal{BA})$};
        \node[fill=white, inner sep=2pt] at (V2T) {$B(\mathcal{BA})$};

        \node[fill=white, inner sep=2pt] at (V31) {$\mathcal{T}$};
        \node[fill=white, inner sep=2pt] at (V32) {$R(\mathcal{T})$};
        \node[fill=white, inner sep=2pt] at (V33) {$R(Bip(\mathcal{T}))$};
        \node[fill=white, inner sep=2pt] at (V34) {$Bip(\mathcal{T})$};
        \node[fill=white, inner sep=2pt] at (V3T) {$B(\mathcal{T})$};

        \node[fill=white, inner sep=2pt] at (VE1) {$Bip^{-}(\mathcal{DML})$};
        \node[fill=white, inner sep=2pt] at (VE2) {$R(Bip^{-}(\mathcal{DML}))$};
        \node[fill=white, inner sep=2pt] at (VE3) {$B^{-}(\mathcal{DML})$};
        
    \end{tikzpicture}
    }    
        
        \caption{All varieties of De Morgan bisemilattices}
    \label{fig:subvarieties}
\end{figure}
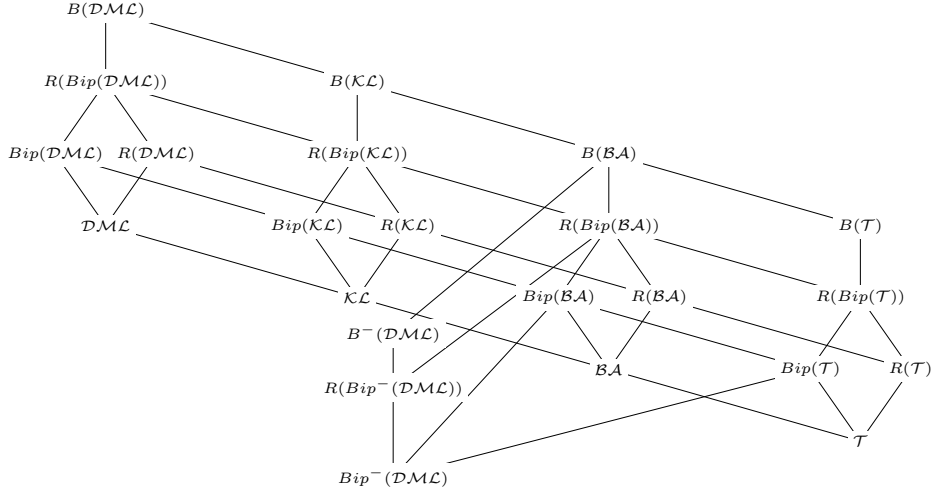
\begin{proof}
    (1) If we run the Python code in Figure \ref{fig:placeholder}, we obtain $15$ valid subsets of subdirectly irreducible algebras that respect the enforced constraints, giving rise to the Hasse diagram in Figure \ref{fig:subvarieties}, where the edge between $Bip(\mathcal{T})$ and $Bip^{-}(\mathcal{DML})$, thus represented for typographical reasons, should be understood as pointing upwards. We now proceed to justify these constraints.
    
\begin{figure}[h]
    \centering
    \begin{lstlisting}
from itertools import chain, combinations
A = set(range(1, 12))
def powerset(s):
    s = list(s)
    return chain.from_iterable(combinations(s, r) for r in range(len(s) + 1))
def valid(S):
    S = set(S)
    # (a) must contain 1
    if 1 not in S:
        return False
    # (b) must contain at least one number >= 9
    if not any(x >= 9 for x in S):
        return False
    # (c) if 11, then 9 and 5
    if 11 in S and not ({9, 5} <= S):
        return False
    # (d) if 10, then 9
    if 10 in S and 9 not in S:
        return False
    # (e) if 8, then 7,6,5,4,3,2
    if 8 in S and not ({7,6,5,4,3,2} <= S):
        return False
    # (f) if 7, then 6,5,3,2
    if 7 in S and not ({6,5,3,2} <= S):
        return False
    # (g) if 6, then 5 and 2
    if 6 in S and not ({5,2} <= S):
        return False
    # (h) if 4, then 3 and 2
    if 4 in S and not ({3,2} <= S):
        return False
    # (i) if 3, then 2
    if 3 in S and 2 not in S:
        return False
    # (j) 10 and 2 iff 9 and 2
    if ((10 in S and 2 in S) != (9 in S and 2 in S)):
        return False
    # (k) 10 and 3 iff 9 and 3
    if ((10 in S and 3 in S) != (9 in S and 3 in S)):
        return False
    # (l) 10 and 4 iff 9 and 4
    if ((10 in S and 4 in S) != (9 in S and 4 in S)):
        return False
    # (m) 6 iff 2 and 5
    if ((6 in S) != ({2,5} <= S)):
        return False
    # (n) 7 iff 3 and 5
    if ((7 in S) != ({3,5} <= S)):
        return False
    # (o) 8 iff 4 and 5
    if ((8 in S) != ({4,5} <= S)):
        return False
    return True
# Generate and sort all valid subsets
valid_sets = sorted(
    [sorted(S) for S in powerset(A) if valid(S)],
    key=lambda x: (len(x), x))
print("Number of valid subsets:", len(valid_sets))
print("\nValid subsets:")
for s in valid_sets:
    print(s)
\end{lstlisting}
    \caption{Constraints on the generating sets for subvarieties of $\mathcal{DMBL}$}
    \label{fig:placeholder}
\end{figure}

    Constraint (a) is clear. Constraint (b) is there because we are determining the subvarieties of $\mathcal{DMBL}$ that are not below $\mathcal{RDMBL}$. Constraint (c) is in place because $\mathbf{IS}_2$ and $\mathbf{IS}_3$ are subalgebras of $\mathbf{IS}_4$. Constraint (d) holds because $\mathbf{IS}_3$ is a subalgebra of $\A_5$. Constraints (h) and (i) hold because $\B_2$ is a subalgebra of $\mathbf{K}_3$, which in turn is a subalgebra of $\mathbf{DM}_4$. Constraint (g) holds because $\B_2$ is a subalgebra of $\B_2^\dagger$ and $\mathbf{IS}_2$ is a quotient of it; similarly for Constraints (e) and (f).

    As regards Constraints (m), (n), (o), their left-to-right directions follow from Constraints (e), (f), (g). For the other direction, observe that, for $\A \in \{ \B_2, \mathbf{K}_3, \mathbf{DM}_4 \}$, $\A^\dagger$ is a quotient of $\A \times \mathbf{IS}_2$. 

    Finally, we justify Constraints (j), (k), (l) by showing that for $\A \in \{ \B_2, \mathbf{K}_3, \mathbf{DM}_4 \}$, the $V(\A_5, \A)$ coincides with $V(\mathbf{IS}_3, \A)$, i.e., with $Bip(V(\A))$. One direction follows from the fact that $\mathbf{IS}_3$ is a subalgebra of $\A_5$---see Constraint (d). For the converse inclusion, observe that $\A_5$ is a subalgebra of $\A \times \mathbf{IS}_3$.

    (2) The inclusions in Figure \ref{fig:subvarieties} follow from (i) and from the Theorems in Section \ref{morgana}. It remains to be shown that these varieties are pairwise distinct.

    It is well-known that $\mathcal{T} \subset \mathcal{BA} \subset \mathcal{KL} \subset \mathcal{DML}$. It follows from the results in \cite{Dolinka2000} that the displayed subvarieties of $B(\mathcal{T})$ are pairwise distinct (and distinct from all varieties extending $\mathcal{BA}$), while it follows from the results in \cite{Dudek} that the operator $R$ is injective and that $\mathcal{V} \subset R(\mathcal{V})$, for $\mathcal{V} \in \{\mathcal{T}, \mathcal{BA}, \mathcal{KL}, \mathcal{DML}\}$.

    The next table shows that the operators $Bip(.),R(Bip(.)),B(.)$ are injective by presenting, for varieties $\mathcal{V},\mathcal{W}$ such that $\mathcal{V} \subset \mathcal{W}$, an identity that holds in $Bip(\mathcal{V})$ (respectively, in $R(Bip(\mathcal{V})), B(\mathcal{V})$) but not in $Bip(\mathcal{W})$ (respectively, in $R(Bip(\mathcal{W})), B(\mathcal{W})$). We use the abbreviations $\overline{x}$ for $x \lor \lnot x$ and $\underline{x}$ for $x \land \lnot x$:
    \begin{eqnarray*}
Bip(\mathcal{BA}) \subset Bip(\mathcal{KL}) & \underline{x} \approx \underline{y}\\
Bip(\mathcal{KL}) \subset Bip(\mathcal{DML})& \underline{x} \approx \underline{x} \land \overline{y} \\
R(Bip(\mathcal{BA})) \subset R(Bip(\mathcal{KL})) & \underline{x} \land y \approx \underline{x} \land \lnot y\\
R(Bip(\mathcal{KL})) \subset R(Bip(\mathcal{DML})) & \overline{x} \land \overline{y} \land (\underline{x} \lor \underline{y}) \approx \underline{x} \lor \underline{y}\\
B(\mathcal{BA}) \subset B(\mathcal{KL}) & \underline{x} \land ( \underline{x} \lor \underline{y}) \approx \underline{y} \land ( \underline{y} \lor \underline{x})\\
B(\mathcal{KL}) \subset B(\mathcal{DML}) & \overline{x} \land \overline{y} \land (\underline{x} \lor \underline{y}) \approx \underline{x} \lor \underline{y}\\
    \end{eqnarray*}
For $\mathcal{V},\mathcal{W} \in \{\mathcal{T}, \mathcal{BA}, \mathcal{KL}, \mathcal{DML}\}$, we have that $R(\mathcal{V}) \neq Bip(\mathcal{W}),R(\mathcal{V}) \neq R(Bip(\mathcal{W}))$ and $R(\mathcal{V}) \neq B(\mathcal{W})$ because $R(\mathcal{V})$ satisfies $x \land (x \lor y) \approx x \land (x \lor \lnot y)$ while $Bip(\mathcal{W}), R(Bip(\mathcal{W})), B(\mathcal{W})$ do not. We also have that $Bip(\mathcal{V}) \neq R(Bip(\mathcal{W}))$ because $Bip(\mathcal{V})$ satisfies $x \land (x \lor y \lor \lnot y) \approx x \land (x \lor \lnot x) $ while $R(Bip(\mathcal{W}))$ does not. Finally, we have that $Bip(\mathcal{V}) \neq B(\mathcal{W})$ and $R(Bip(\mathcal{V})) \neq B(\mathcal{W})$ because $Bip(\mathcal{V}),R(Bip(\mathcal{V}))$ satisfy $x \land (x \lor y \lor \lnot y) \approx x \land (x \lor \lnot x \lor y) $, while $B(\mathcal{W})$ does not.

To conclude, let us deal with $Bip^-(\mathcal{DML}),R(Bip^-(\mathcal{DML})),B^-(\mathcal{DML})$. They satisfy the inclusions $Bip^-(\mathcal{DML}) \subset R(Bip^-(\mathcal{DML})) \subset B^-(\mathcal{DML})$, as witnessed from the fact that $Bip^-(\mathcal{DML})$ satisfies $x \land \lnot x \approx y \lor \lnot y$ while $R(Bip^-(\mathcal{DML}))$ does not, and the fact that $R(Bip^-(\mathcal{DML}))$ satisfies $x \land \lnot x \land y \approx x \land \lnot x \land \lnot y$ while $B^-(\mathcal{DML})$ does not. These three varieties are also distinct from all involutive semilattice varieties, because they do not satisfy $x \land y \approx x \lor y$, and from all extensions of $\mathcal{BA}$, because they satisfy $x \land \lnot x \approx x \lor \lnot x$ while any extension of $\mathcal{BA}$ fails it.
\end{proof}

\emph{Acknowledgements.} We gratefully acknowledge the support of the following funding sources:
\begin{itemize}
    \item the European Union (Horizon Europe Research and Innovation Programme), through the MSCA-RISE action PLEXUS (Grant Agreement no 101086295);
    \item the Italian Ministry of Education, University and Research, through the PRIN 2022 project DeKLA (``Developing Kleene Logics and their Applications'', project code: 2022SM4XC8) and the PRIN Pnrr project ``Quantum Models for Logic, Computation and Natural Processes (Qm4Np)'' (project code: P2022A52CR);
    \item Fondazione di Sardegna, through the project ``An algebraic approach to hyperintensionality'', project code: F23C25000360007;
    \item the French Ministère de l’Europe et des Affaires Étrangères (MEAE) and Ministère de l’Enseignement Supérieur, de la Recherche et de l’Innovation (MESRI), and the Argentinian Ministerio de Ciencia, Tecnología e Innovación (MinCyT) and Consejo Nacional de Investigaciones Científicas y Técnicas (CONICET), through the bilateral ECOS Sud project ``Logical consequence and many-valued models'' (Grant A22H01)
    \item the Northwestern Italian Philosophy Consortium (FINO) and the Fondazione Franco e Marilisa Caligara (Turin, Italy).
\end{itemize}

\bibliography{references}
\bibliographystyle{abbrv}

\end{document}